\definecolor{mylinkcolor}{rgb}{0.8,0,0}
\definecolor{myurlcolor}{rgb}{0,0,0.8}
\definecolor{mycitecolor}{rgb}{0,0,0.8}
\newtheorem{defn}{Definition}[section]
\newtheorem{corollary}[defn]{Corollary}
\newtheorem{lemma}[defn]{Lemma}
\newtheorem{thm}[defn]{Theorem}
\newtheorem{theorem}[defn]{Theorem}
\newtheorem{prop}[defn]{Proposition}
\newtheorem{proposition}[defn]{Proposition}
\theoremstyle{definition}
\newtheorem*{ack}{Acknowledgements}
\newtheorem{remark}[defn]{Remark}
\newtheorem{example}[defn]{Example}
\newcommand{\Q}{\mathbb Q}
\newcommand{\Qbar}{\overline{\Q}}
\newcommand{\Z}{\mathbb Z}
\newcommand{\F}{\mathbb F}
\newcommand{\SL}{\operatorname{SL}}
\newcommand{\PSL}{\operatorname{PSL}}
\newcommand{\Gal}{\operatorname{Gal}}
\newcommand{\Aut}{\operatorname{Aut}}
\newcommand{\GL}{\operatorname{GL}}
\newcommand{\PGL}{\operatorname{PGL}}
\newcommand{\im}{\mathrm{im}\,}
\newcommand{\tor}{\mathrm{tors}}
\begin{document}



\bibliographystyle{plain}
\title[Torsion over $\Q(3^\infty)$]{Torsion subgroups of rational elliptic curves over the compositum of all cubic fields}

\author{Harris B. Daniels}
\address{Department of Mathematics and Statistics, Amherst College, Amherst, MA 01002, USA}
\email{hdaniels@amherst.edu}
\urladdr{http://www3.amherst.edu/~hdaniels/}

\author{\'Alvaro Lozano-Robledo}
\address{Dept. of Mathematics, Univ. of Connecticut, Storrs, CT 06269, USA}
\email{alvaro.lozano-robledo@uconn.edu}
\urladdr{http://alozano.clas.uconn.edu/}

\author{Filip Najman}
\address{Department of Mathematics, University of Zagreb, Bijeni\v{c}ka cesta 30, 10000 Zagreb, CROATIA}
\email{fnajman@math.hr}
\urladdr{http://web.math.pmf.unizg.hr/~fnajman/}

\author{Andrew V. Sutherland}
\address{Department of Mathematics, MIT, Cambridge, MA 02139, USA}
\email{drew@math.mit.edu}
\urladdr{http://math.mit.edu/~drew/}

\thanks{The third author acknowledges
	support from the QuantiXLie Center of Excellence. The fourth author was supported by NSF grants DMS-1115455 and DMS-1522526.}


\subjclass[2010]{Primary: 11G05, Secondary: 11R21, 12F10, 14H52.}

\begin{abstract} Let $E/\Q$ be an elliptic curve and let $\Q(3^\infty)$ be the compositum of all cubic extensions of $\Q$. In this article we show that the torsion subgroup of $E(\Q(3^\infty))$ is finite and determine 20 possibilities for its structure, along with a complete description of the $\Qbar$-isomorphism classes of elliptic curves that fall into each case.
We provide rational parameterizations for each of the 16 torsion structures that occur for infinitely many $\Qbar$-isomorphism classes of elliptic curves, and a complete list of $j$-invariants for each of the 4 that do not.
\end{abstract}

\maketitle

\section{Introduction}

Interest in the rational points on elliptic curves dates back at least to Poincar\'e, who in 1901 conjectured that the group $E(\Q)$ of rational points on an elliptic curve $E$ over $\Q$ is a finitely generated abelian group \cite{poincare}.
This conjecture was proved by Mordell \cite{mordell} in 1922 and then vastly generalized by Weil \cite{weil}, who proved in 1929 that the group of rational points on an abelian variety defined over a number field is finitely generated.
An immediate consequence of the Mordell-Weil theorem is that the torsion subgroup $E(F)_\tor$ of an elliptic curve $E$ over a number field $F$ is finite, and therefore isomorphic to a group of the form
\[
\Z/a\Z \oplus \Z/ab\Z,
\]
for some integers $a,b\geq 1$. In 1996, Merel \cite{merel} proved the existence of a uniform bound on the cardinality of $E(F)_\tor$ that depends only on the number field $F$, not the particular elliptic curve $E/F$; in fact, Merel's bound depends only on the degree of the field extension $F/\Q$.
This bound was improved and made effective by Oesterl\'e in 1994 (unpublished), and later by Parent \cite{parent} in 1999.

It is thus a natural goal to classify (up to isomorphism), the torsion subgroups of elliptic curves defined over number fields of degree $d$, for fixed integers $d\geq 1$. Mazur famously proved such a classification for $d=1$.

\begin{thm}[Mazur \cite{mazur1}]\label{thm-mazur}
Let $E/\Q$ be an elliptic curve. Then
\[
E(\Q)_\tor\simeq
\begin{cases}
\Z/M\Z &\text{with}\ 1\leq M\leq 10\ \text{or}\ M=12,\ \text{or}\\
\Z/2\Z\oplus \Z/2M\Z &\text{with}\ 1\leq M\leq 4.
\end{cases}
\]
\end{thm}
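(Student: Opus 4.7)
The plan is to follow Mazur's original strategy. By the Mordell--Weil theorem, $E(\Q)_\tor$ is a finite abelian group, so $E(\Q)_\tor \cong \Z/a\Z \oplus \Z/ab\Z$ for some $a, b \geq 1$; the task is to determine which pairs $(a,b)$ actually occur. I would organize the argument in three stages: first produce a short a priori list of candidate structures via reduction modulo primes of good reduction, then reinterpret each surviving case as a rational-point problem on a modular curve $X_1(N)$ or $X_1(2, 2M)$, and finally rule out the forbidden cases using the Eisenstein ideal and formal immersion machinery on $J_0(N)$ and $J_1(N)$.

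For the first stage, fix any prime $p \geq 3$ of good reduction for $E$. The reduction map is injective on prime-to-$p$ torsion, so $E(\Q)_\tor$ embeds into $E(\F_p)$, whose order is at most $p + 1 + 2\sqrt{p}$ by Hasse. Varying $p$ and combining via the Chinese remainder theorem bounds both the primes $\ell$ that can divide $|E(\Q)_\tor|$ and their $\ell$-adic valuations, leaving only finitely many abstract group types in contention.

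For the second stage, the affine modular curve $Y_1(N)$ parameterizes pairs $(E, P)$ with $P \in E$ of exact order $N$, while $Y_1(2, 2M)$ parameterizes triples generating a $\Z/2\Z \oplus \Z/2M\Z$ structure. Consequently the theorem reduces to the assertions
\[
Y_1(N)(\Q) = \emptyset \text{ for } N = 11 \text{ and all } N \geq 13,
\]
together with $Y_1(2, 2M)(\Q) = \emptyset$ for all $M \geq 5$. The curves $X_1(N)$ corresponding to the values of $N$ listed in the theorem have genus zero and admit explicit rational parameterizations, which simultaneously verifies that each listed group occurs and exhibits infinite families realizing it.

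The heart of the proof --- and the main obstacle --- is the third stage: proving that each remaining $X_1(N)$ has only cuspidal rational points. The plan is Mazur's Eisenstein ideal method. Inside the Hecke algebra $\mathbb{T}$ acting on $J_0(N)$ (or $J_1(N)$), isolate the Eisenstein ideal $I$, form the quotient abelian variety $J_0(N)/IJ_0(N)$, and use Eichler--Shimura together with congruences among modular forms to show that the cuspidal divisor class has finite order and generates a subgroup of Mordell--Weil rank zero. A formal immersion argument at a carefully chosen auxiliary prime $p$ of good reduction then upgrades this: any hypothetical non-cuspidal rational point would have to specialize to a cusp modulo $p$, and the formal immersion into the Eisenstein quotient forces equality in the generic fiber, contradicting non-cuspidality. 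For prime $N$, this is exactly Mazur's analysis of $J_0(\ell)$; composite $N$ (such as $14, 15, 16, 18, 20, 21, 24, 25$) are attacked by the same blueprint after descending to Atkin--Lehner quotients or exploiting the isogeny decomposition of $J_1(N)$, and the $(2, 2M)$ cases are treated either by running the parallel Eisenstein argument on $X_1(2, 2M)$ or by using the $2$-isogeny $E \to E/\langle P\rangle$ to transport the obstruction to an already-excluded cyclic case. The rank-zero plus formal immersion package is overwhelmingly the hardest ingredient; the remainder of the proof is the combinatorial assembly of its consequences into the explicit list.
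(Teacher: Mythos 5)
The paper does not prove this statement: it is quoted as Mazur's theorem with a citation to \cite{mazur1} and used as a black box, so there is no internal proof to compare your argument against. Judged on its own terms, your outline correctly identifies the architecture of Mazur's proof---translate the problem into the nonexistence of non-cuspidal rational points on the curves $X_1(N)$ and $X_1(2,2M)$, and attack those via the Eisenstein ideal and a formal immersion into a suitable quotient of $J_0(N)$---but as written it is a roadmap that defers essentially all of the mathematical content to ``Mazur's analysis of $J_0(\ell)$'' rather than an argument.

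There is also a concrete logical gap in your first stage. Reduction modulo a prime $p$ of good reduction embeds the prime-to-$p$ torsion of $E(\Q)$ into $E(\F_p)$ and so bounds the torsion of each \emph{individual} curve, but the primes of good reduction depend on $E$; no amount of varying $p$ and invoking the Chinese remainder theorem produces a finite list of candidate group structures that is uniform over all $E/\Q$. The reduction to a manageable set of cases comes instead from divisibility: ruling out a rational point of order $N$ rules out all multiples of $N$, so it suffices to treat the minimal forbidden levels $N\in\{14,15,16,18,20,21,24,25,27,49\}$, the groups $\Z/2\Z\oplus\Z/2M\Z$ with $M=5,6$ (for $M\ge 7$ the cyclic part is already forbidden), and \emph{all} primes $p\ge 11$. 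That last infinite family is exactly where a uniform Eisenstein-ideal and formal-immersion argument is unavoidable, and it is the entire content of \cite{mazur1}; the finitely many composite levels had in fact been settled before Mazur by explicit descent on the corresponding low-genus curves (Billing--Mahler, Ogg, Ligozat, Kubert, Mazur--Tate), not by the Eisenstein quotient machinery you propose to run on them. With the first stage replaced by this divisibility reduction your skeleton matches the accepted proof, but in its present form it is closer to an annotated citation than to a proof.
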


The classification for $d=2$ was initiated by Kenku and Momose, and completed by Kamienny.

\begin{thm}[Kenku, Momose \cite{kenmom}, Kamienny \cite{kamienny}]\label{thm-quadgroups}
Let $E/F$ be an elliptic curve over a quadratic number field $F$. Then
\[
E(F)_\tor\simeq
\begin{cases}
\Z/M\Z &\text{with}\ 1\leq M\leq 16\ \text{or}\ M=18,\ \text{or}\\
\Z/2\Z\oplus \Z/2M\Z &\text{with}\ 1\leq M\leq 6,\ \text{or}\\
\Z/3\Z \oplus \Z/3M\Z &\text{with}\ M=1\ \text{or}\ 2,\ \text{only if}\ F = \Q(\sqrt{-3}),\ \text{or}\\
\Z/4\Z \oplus \Z/4\Z &\text{only if}\ F = \Q(\sqrt{-1}).
\end{cases}
\]
\end{thm}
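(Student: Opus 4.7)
The plan is to adapt the strategy of Mazur's theorem (Theorem~\ref{thm-mazur}) to the quadratic setting. Torsion structures of the form $\Z/m\Z \oplus \Z/mn\Z$ are parametrized by non-cuspidal points on the modular curve $X_1(m,mn)$, so the classification reduces to deciding which $X_1(N)$ (and more generally $X_1(m,mn)$) admit non-cuspidal points of degree at most $2$ over $\Q$. A quadratic non-cuspidal point on $X_1(N)$ pushes forward to a $\Q$-rational non-cuspidal point on the symmetric square $X_1(N)^{(2)}$, which embeds into the Jacobian $J_1(N)$ via Abel--Jacobi; this is the central geometric object of study.

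The first and most delicate step is a uniform prime bound: if $E/F$ has a torsion point of prime order $p$ and $[F:\Q]=2$, then $p\leq 13$. For this I would follow Kamienny's extension of Mazur's formal immersion method. Choose an auxiliary small prime $\ell$ and consider the composition $X_1(p)^{(2)} \hookrightarrow J_1(p) \to J_e$, where $J_e$ is a quotient of $J_1(p)$ of $\Q$-rank zero (the winding quotient, or a suitable Eisenstein quotient). A quadratic non-cuspidal point on $X_1(p)$ would yield a $\Q$-rational point on $X_1(p)^{(2)}$ whose image in $J_e$ coincides with the image of a pair of cusps; if the composed map is a formal immersion at the relevant pair of cusps modulo $\ell$, this is impossible. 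Kamienny's contribution is to reduce the formal immersion criterion to the non-vanishing modulo $\ell$ of a small matrix of $q$-expansion coefficients of weight-two cusp forms on $\Gamma_1(p)$, which can be verified by hand for $p=17$ and a short list of larger primes.

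Once $p\leq 13$ is established, I would rule out the remaining prime powers and composite levels case by case. For each candidate $N$ (for instance $N=17,19,20,21,22,24,25,27$), one needs to show that $X_1(N)$ has no non-cuspidal quadratic points: for small genus, explicit equations together with Chabauty's method or a Mordell--Weil sieve suffice; for larger $N$ one iterates the formal immersion argument at higher levels, following Kenku--Momose. The two-parameter groups $\Z/m\Z \oplus \Z/mn\Z$ with $m\geq 2$ are then constrained by the Weil pairing: if $E[m]\subseteq E(F)$ then $\mu_m\subseteq F$, and $\Q(\zeta_m)$ is quadratic only for $m=3$ and $m=4$. This identifies the sporadic cases $\Z/3\Z\oplus\Z/3\Z$ and $\Z/3\Z\oplus\Z/6\Z$ with $F=\Q(\sqrt{-3})$, and $\Z/4\Z\oplus\Z/4\Z$ with $F=\Q(\sqrt{-1})$, and excludes all $m\geq 5$.

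The hard part, as in Mazur's original argument, is the uniform prime bound via formal immersions on the symmetric square: this requires nontrivial input on the arithmetic of $J_0(p)$ and $J_1(p)$, their Eisenstein or winding quotients, and explicit control of cusp-form $q$-expansions modulo small primes. The composite-level sieving that follows, while lengthy, is comparatively mechanical once the primes are controlled and the Weil-pairing obstruction for the rank-two part is extracted.
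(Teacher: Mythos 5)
The paper does not actually prove this statement: Theorem~\ref{thm-quadgroups} is quoted as an external result of Kenku--Momose and Kamienny, so there is no internal proof to compare against. Measured against the proofs in the cited references, your outline is a faithful reconstruction of the real strategy: Kamienny's bound $p\le 13$ on prime torsion over quadratic fields via the Mazur-style formal immersion criterion applied to $X_1(p)^{(2)}\hookrightarrow J_1(p)$ composed with a rank-zero (Eisenstein/winding) quotient, with the formal-immersion condition reduced to independence of $q$-expansion coefficients of weight-two cusp forms; followed by the Kenku--Momose elimination of the remaining levels via quadratic points on the relevant modular curves.

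One step as written is wrong, however. You assert that the Weil pairing ``excludes all $m\ge 5$'' for torsion of the form $\Z/m\Z\oplus\Z/mn\Z$ because $\Q(\zeta_m)$ is quadratic only for $m=3,4$. But $\varphi(6)=2$ and $\Q(\zeta_6)=\Q(\sqrt{-3})$ is quadratic, so the pairing does not rule out $\Z/6\Z\oplus\Z/6M\Z$ over $\Q(\sqrt{-3})$; those groups --- and likewise $\Z/3\Z\oplus\Z/3M\Z$ for $M\ge 3$ and $\Z/4\Z\oplus\Z/4M\Z$ for $M\ge 2$, which satisfy the cyclotomic constraint --- must be excluded by analyzing the corresponding modular curves $X_1(m,mn)$, not by the pairing alone. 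The pairing only pins down the field $F$ in the cases that do survive. A smaller quibble: once $p\le 13$ is established, the levels $N=17$ and $N=19$ in your candidate list are already dispatched, and the genuinely remaining cyclic levels are the composite $N$ whose prime factors are all at most $13$ (e.g.\ $20,21,22,24,25,26,27,\dots$), which is where the Kenku--Momose case analysis actually lives.
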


The case $d=3$ remains open. Jeon, Kim, and Schweizer have determined the torsion structures that appear infinitely often as one runs through all elliptic curves over all cubic fields \cite{jeon2}, and Jeon, Kim, and Lee have constructed infinite families of elliptic curves that realize each of these torsion structures~\cite{jeon1}.

\begin{thm}[Jeon, Kim, Lee, Schweizer \cite{jeon1,jeon2}] Suppose that $T$ is an abelian group for which there exist infinitely many $\Qbar$-isomorphism classes of elliptic curves $E$ over cubic number fields $F$, such that $E(F)_\tor\simeq T$. Then
\begin{align*}
T\simeq\begin{cases}
\Z/M\Z & \text{with } 1\leq M \leq 16 \text{ or } M = 18,20,\ \text{or}\\
\Z/2\Z\oplus \Z/2M\Z &\text{with } 1\leq M \leq 7.
\end{cases}
\end{align*}
Moreover, for each such $T$ an explicit infinite family of elliptic curves over cubic fields with torsion subgroup isomorphic to $T$ is known that contains infinitely many $\Qbar$-isomorphism classes.
\end{thm}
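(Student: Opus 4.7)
The plan is to translate the statement into a question about rational points of bounded degree on modular curves. For each candidate group $T$ (either $\Z/M\Z$ or $\Z/2\Z \oplus \Z/2M\Z$), the modular curve $X_1(T)$ parameterizes pairs $(E,P_1,\ldots)$ with the appropriate level structure; an elliptic curve $E$ over a cubic field $F$ with $E(F)_\tor \supseteq T$ gives rise, up to the natural action of $(\Z/M\Z)^\times / \{\pm 1\}$, to a non-cuspidal point of $X_1(T)(F)$. Consequently, there are infinitely many $\Qbar$-isomorphism classes of $E$ over cubic fields with $E(F)_\tor \simeq T$ if and only if $X_1(T)$ has infinitely many points of degree exactly $3$ over $\Q$ that do not lift from rational or quadratic points.

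The key tool is a theorem of Abramovich--Harris (refining Faltings): a curve $X/\Q$ of genus $\geq 2$ has infinitely many degree-$3$ points if and only if $X$ admits a $\Q$-rational morphism of degree $\leq 3$ onto either $\PP^1$ or an elliptic curve of positive $\Q$-rank. Accordingly, I would first cut down to a finite list of candidate $T$ using a cubic analog of the Mazur--Kamienny--Merel program: Parent's explicit form of Merel's theorem bounds the prime orders of torsion of $E(F)$ for $F$ cubic, and arguments \`a la Kamienny--Mazur eliminate large composite levels. For each surviving $T$, I would compute $g(X_1(T))$ by the standard genus formula; when $g \leq 1$ the curve trivially has infinitely many cubic points. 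When $g \geq 2$, I would bound the $\Q$-gonality from below via Abramovich's inequality $\mathrm{gon}_\Q X_1(N) \geq \frac{7}{800}[\SL_2(\Z):\Gamma_1(N)]$, which handles all sufficiently large levels uniformly.

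For the $T$ listed in the theorem, the positive assertion is made explicit by constructing one-parameter families over cubic fields realizing $E(F)_\tor \simeq T$: this amounts to exhibiting a $\Q$-rational degree-$3$ map from $X_1(T)$ to either $\PP^1$ (when $X_1(T)$ is $\Q$-trigonal) or to an elliptic $\Q$-quotient of positive rank, then pulling back a generic base point. For the $T$ not in the list, one shows that $X_1(T)$ has $\Q$-gonality $\geq 4$ and that no elliptic quotient reached by a degree-$\leq 3$ map has positive $\Q$-rank, forcing the set of cubic points to be finite.

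The main obstacle will be the borderline levels, where theoretical gonality bounds are close to or miss the truth (for instance $X_1(N)$ for $N$ in the low twenties, or the mixed-level curves $X_1(2,2M)$ for $M$ near the cutoff). Here one must work with explicit equations for the modular curve, use Riemann--Roch style arguments to rule out low-degree pencils, and perform rank computations on specific elliptic quotients of $J_1(T)$. A secondary technical difficulty lies on the positive side: after pulling back a generic point via a degree-$3$ map, one must verify that the resulting residue field is genuinely cubic (and not split or quadratic) and that the torsion of $E(F)$ is exactly $T$ rather than strictly larger, which requires a careful Galois-theoretic analysis of the $N$-division polynomial along the family.
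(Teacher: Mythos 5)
This statement is quoted in the paper from the literature (\cite{jeon1,jeon2}) without proof, so there is no in-paper argument to compare against; the relevant benchmark is the strategy actually used by Jeon--Kim--Schweizer (for the classification) and Jeon--Kim--Lee (for the explicit families). Your outline reproduces that strategy faithfully: reduce to cubic points on the modular curves $X_1(N)$ and $X_1(2,2N)$; obtain a finite list of candidate levels from uniform boundedness (Merel/Oesterl\'e/Parent for the primes, Kamienny--Mazur-style arguments for composite levels); invoke the Frey/Abramovich--Harris criterion that for $d\le 3$ a curve has infinitely many points of degree $\le d$ if and only if it admits a $\Q$-rational map of degree $\le d$ to $\PP^1$ or to an elliptic curve of positive rank; use Abramovich's gonality bound $\mathrm{gon}(X_\Gamma)\ge \tfrac{7}{800}[\PSL_2(\Z):\overline{\Gamma}]$ to dispose of large levels uniformly; and handle the borderline levels by explicit models, trigonal pencils, and rank computations on quotients of $J_1(N)$. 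You also correctly identify the two standard loose ends: passing from ``contains $T$'' to ``equals $T$'' for the generic member of a family, and checking that pulled-back points are genuinely cubic. So the approach is the right one and there is no conceptual error.

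The caveat is that, as written, this is a program rather than a proof: essentially all of the content of the theorem lives in the steps you defer. In particular, the exact cutoffs ($M\le 16$ or $M=18,20$ but \emph{not} $17,19,21,\ldots$; $\Z/2\Z\oplus\Z/2M\Z$ for $M\le 7$ but not $8$) are decided precisely at the borderline curves where the general gonality bound is silent --- e.g.\ showing $X_1(20)$ (genus $3$, a plane quartic with a rational cusp, hence $\Q$-trigonal) has infinitely many cubic points while $X_1(21)$ (which does carry a sporadic cubic point, as the paper's Theorem~\ref{thm-najman2} records) has only finitely many, and producing degree-$3$ maps from curves such as $X_1(2,14)$ to positive-rank elliptic quotients. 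None of that case analysis, nor the construction of the explicit families required by the ``Moreover'' clause, is carried out here, so the proposal should be regarded as a correct roadmap to the cited proofs rather than a self-contained argument.
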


A similar list of possible torsion structures that appear infinitely often as one runs through all elliptic curves over all quartic fields was determined by Jeon, Kim and Park \cite{jeon3} and infinite families of elliptic curves that realize each of these torsion structures were constructed by Jeon, Kim, and Lee \cite{jeon4}.

Sharper results can be proved if one restricts to base extensions of elliptic curves that are defined over $\Q$. In this setting the second author has obtained bounds on the largest prime-power order that may appear in a torsion subgroup \cite{lozano1, lozano5}, and the third author has classified the torsion subgroups that can arise over extensions of degrees 2 and 3 \cite{najman}. Chou \cite{chou} has classified the groups that can appear when base-extending to a Galois quartic extension of $\Q$.

\begin{thm}\cite[Thm.~2]{najman}
\label{thm-najman1} Let $E/\Q$ be an elliptic curve and let $F$ be a quadratic number field. Then
\[
E(F)_\tor\simeq
\begin{cases}
\Z/M\Z &\text{with}\ 1\leq M\leq 10 \text{ or } M = 12,15,16,\ \text{or}\\
\Z/2\Z\oplus \Z/2M\Z &\text{with}\ 1\leq M\leq 6,\ \text{or}\\
\Z/3\Z \oplus \Z/3M\Z &\text{with}\ 1\leq M\leq 2 \text{ and }F = \Q(\sqrt{-3}),\ \text{or}\\
\Z/4\Z \oplus \Z/4\Z &\text{with}\ F = \Q(\sqrt{-1}).
\end{cases}
\]

\end{thm}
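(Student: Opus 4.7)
The plan is to take Theorem~\ref{thm-quadgroups} as a starting point: it lists every possible $E(F)_\tor$ as $E$ ranges over elliptic curves over arbitrary quadratic fields $F$, and the restrictions $F = \Q(\sqrt{-3})$ and $F = \Q(i)$ for the non-cyclic cases are already included there. So to prove Theorem~\ref{thm-najman1} it suffices to show that imposing the hypothesis $E/\Q$ eliminates exactly the four cyclic groups $\Z/11\Z$, $\Z/13\Z$, $\Z/14\Z$, $\Z/18\Z$ from that list, and then to exhibit a curve realizing each surviving structure.

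The main tool is the quadratic twist. Write $F = \Q(\sqrt d)$, let $\sigma$ generate $\Gal(F/\Q)$, and let $E^d/\Q$ be the twist of $E$ by $d$. Any $P \in E(F)$ decomposes as $P = \tfrac12(P+\sigma P) + \tfrac12(P-\sigma P)$, with the first summand $\sigma$-fixed (hence lying in $E(\Q)$) and the second corresponding under the twist isomorphism $E \cong E^d$ over $F$ to a $\Q$-rational point on $E^d$. For odd $n$ this decomposition is well-defined and yields
\[
E(F)[n] \;\cong\; E(\Q)[n] \oplus E^d(\Q)[n],
\]
so the odd part of $E(F)_\tor$ is doubly constrained by applying Theorem~\ref{thm-mazur} to $E$ and to $E^d$.

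The cases $\Z/11\Z$ and $\Z/13\Z$ fall out immediately: each would place an $11$- or $13$-torsion point on $E$ or on $E^d$, contradicting Mazur. For $\Z/14\Z$ the displayed isomorphism forces a rational $7$-torsion point on $E$ or $E^d$, and since $E(F)_\tor = E^d(F)_\tor$ we may assume $E(\Q)_\tor \cong \Z/7\Z$; it then remains to produce a $2$-torsion point in $E(F)$. If the $2$-division polynomial of $E$ factored over $\Q$ it would supply a rational $2$-torsion point and thus $\Z/14\Z \subseteq E(\Q)_\tor$, again contradicting Mazur. Hence that cubic is irreducible over $\Q$, so $[\Q(\alpha):\Q]=3$ for any root $\alpha$, and no root can lie in a quadratic extension; therefore $E[2](F)=0$, a contradiction. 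The same argument with $7$ replaced by $9$ (and $14$ by $18$) rules out $\Z/18\Z$. Existence of examples for each surviving group can then be verified by pulling rational points on $X_1(N)$ for $N\in\{15,16\}$ and on $X_1(2,2N)$ for $1\le N\le 6$ back to the $j$-line over $\Q$, or by constructing explicit twist pairs whose combined rational torsion realizes $\Z/15\Z$ or $\Z/16\Z$ over the appropriate quadratic field.

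The main obstacle is the $\Z/14\Z$ and $\Z/18\Z$ step: twisting alone does not eliminate them, and it is the additional observation that an irreducible cubic in $\Q[x]$ cannot acquire a root over any quadratic field, combined with Mazur's uniform bound, that closes the argument. Every other part of the proof is either a direct consequence of the twist decomposition or a matter of writing down parametric families.
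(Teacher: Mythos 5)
The paper does not actually prove Theorem~\ref{thm-najman1}; it is quoted from \cite{najman}, so there is no in-text proof to compare against. Your derivation from Theorem~\ref{thm-quadgroups} is correct --- the twist decomposition of the odd-order torsion together with Mazur's theorem kills $\Z/11\Z$ and $\Z/13\Z$, and the observation that a curve over $\Q$ with a rational point of order $7$ or $9$ has, by Mazur, no rational $2$-torsion, hence an irreducible $2$-division polynomial and therefore no $2$-torsion over any quadratic field, kills $\Z/14\Z$ and $\Z/18\Z$ --- and this is essentially the same argument as in the cited source.
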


\begin{thm}\cite[Thm.~1]{najman}
\label{thm-najman2} Let $E/\Q$ be an elliptic curve and let $F$ be a cubic number field. Then
\[
E(F)_\tor\simeq
\begin{cases}
\Z/M\Z &\text{with}\ 1\leq M\leq 10 \text{ or } M = 12,13,14,18,21,\  \text{or}\\
\Z/2\Z\oplus \Z/2M\Z &\text{with}\ 1\leq M\leq 4 \text{ or } M=7.
\end{cases}
\]
Moreover, the elliptic curve $162B1$ over $\Q(\zeta_9)^+$ is the unique rational elliptic curve over a cubic field with torsion subgroup isomorphic to $\Z/21\Z$. For all other groups $T$ listed above there are infinitely many $\Qbar$-isomorphism classes of elliptic curves $E/\Q$ for which $E(F)\simeq T$ for some cubic field $F$.
\end{thm}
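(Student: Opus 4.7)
\emph{Plan.} My approach is to combine three inputs: Mazur's classification of rational torsion and rational cyclic isogenies, the Jeon--Kim--Schweizer classification of torsion groups that occur infinitely often over cubic fields (stated earlier in this excerpt), and a careful analysis of how torsion can grow when passing from $\Q$ to a cubic extension $F$. Comparing the Jeon--Kim--Schweizer list with Najman's list, one must rule out $\Z/M\Z$ for $M\in\{11,15,16,20\}$ and $\Z/2\Z\oplus\Z/2M\Z$ for $M\in\{5,6\}$, and separately establish the exceptional case $\Z/21\Z$, which lies outside the Jeon--Kim--Schweizer infinite-family list but can still occur for isolated rational $j$-invariants.

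\emph{Galois-stability lever.} The key tool is the following. If $E(F)_\tor\simeq \Z/a\Z\oplus\Z/b\Z$ with $a\mid b$, then $E[a]\subseteq E(F)$; by the Weil pairing $\zeta_a\in F$, and since $F$ is cubic we have $\varphi(a)\mid 3$, forcing $a\in\{1,2\}$. This explains the shape of Najman's list. To bound the cyclic part, let $P\in E(F)$ have order $b$ and consider the subgroup $H$ of $E[b]$ generated by $P$ and its $\GQ$-conjugates; $H$ is $\GQ$-stable, so it determines a rational isogeny from $E$, and Mazur's rational isogeny classification restricts its largest cyclic quotient to orders in $\{1,\ldots,19,21,25,27,37,43,67,163\}$.

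\emph{Ruling out and isolating cases.} For each group $T$ in the Jeon--Kim--Schweizer list but missing from Najman's list, the Galois-stability argument produces a short explicit list of rational $j$-invariants (coming from $X_0(M)(\Q)$) to inspect. For each, I would compute the field of definition of a generator of the $M$-isogeny kernel and check whether any cubic subfield of the torsion field carries a point of order $M$. The key numerical input is that for a rational cyclic $p$-isogeny the Galois action on a generator of the kernel is via a character $\chi\colon \GQ\to(\Z/p\Z)^*$, so the field of definition of the generator has degree dividing $p-1$; a cubic torsion point can thus only arise when $3\mid p-1$, which instantly excludes $p=11$. Refined variants of the same argument, together with explicit division-polynomial computations on the finitely many bad curves associated with each exceptional $N$, dispatch $M=15,16,20$ and the two non-cyclic cases. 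For $\Z/21\Z$, the curve $X_0(21)$ has exactly four non-cuspidal rational points; a direct computation of the field of definition of the $21$-isogeny kernel for each of the four corresponding elliptic curves shows that $162B1$ is the unique one whose kernel generator lives in a cubic field, namely $\Q(\zeta_9)^+$.

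\emph{Infinite families and main obstacle.} For each group $T$ remaining in Najman's list, one produces an infinite family of rational elliptic curves realising $T$ over a cubic field by intersecting the Jeon--Kim--Lee cubic parameterisations with the locus $j\in\Q$ and exhibiting the cubic extension explicitly as the field cut out by the cyclic isogeny character above; for the small values of $b$ one simply inherits infinite families from Mazur's theorem or from the quadratic-field classification. I expect the main obstacle to be the fine case analysis on $X_1(N)$ for $N\in\{11,13,14,18,21\}$: one must compute explicit division polynomials and their cubic factors over each of finitely many rational base points of $X_0(N)$, both to rule out cases (as for $N=11$) and to isolate the unique exception (as for $N=21$), where the precise identification of $162B1$ requires a delicate check that no other rational curve with a $21$-isogeny admits a cubic subfield of its $21$-torsion field.
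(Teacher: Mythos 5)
This statement is quoted from \cite{najman} and the paper gives no proof of it whatsoever, so there is no internal argument to compare your proposal against; I can only assess the proposal on its own terms, and it has a genuine structural gap. You take the Jeon--Kim--Schweizer list as the universe of candidate groups and propose to whittle it down, but that theorem only classifies the torsion structures arising for \emph{infinitely many} $\Qbar$-isomorphism classes over cubic fields; it says nothing about sporadic structures realized by finitely many curves, and $\Z/21\Z$ --- which you must \emph{include} --- is exactly such a case. Your own framework therefore cannot rule out other sporadic possibilities such as $\Z/25\Z$, $\Z/27\Z$, or $\Z/p\Z$ for $p\in\{37,43,67,163\}$ (all of which are legitimate rational isogeny degrees by the Mazur--Kenku classification), and for the latter four primes your congruence criterion fails, since $3\mid p-1$ in every case; excluding them requires the sharper lower bounds on $[\Q(P):\Q]$ for isogeny kernels from Lozano-Robledo's work, exactly as the present paper does in its Proposition \ref{prop-possible_primes}. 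What is really needed before any case analysis is an a priori bound on the primes and prime powers dividing $\#E(F)_\tor$, and that is where the bulk of Najman's actual proof lives.

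The ``Galois-stability lever'' also does not deliver the bound you claim on the cyclic part. For a non-Galois cubic $F$ the group $E(F)_\tor$ is not Galois-stable, the subgroup $H$ generated by $P$ and its conjugates need not be cyclic, and its maximal cyclic quotient can be trivial (if the conjugates of $P$ generate all of $E[n]$), in which case Mazur--Kenku imposes no constraint on the order of $P$ at all. Likewise, the statement that $\Q(P)$ is cyclic of degree dividing $p-1$ presupposes that $\langle P\rangle$ is Galois-stable, i.e., that $P$ already generates the kernel of a rational isogeny; for a point of order $11$ over a non-Galois cubic field this is not automatic, and one must instead analyze the orbit of the line $\langle P\rangle$ under the mod-$11$ image (equivalently, cubic points on $X_0(11)$ and the known exceptional mod-$11$ images). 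Your outline of the $\Z/21\Z$ analysis and of the infinite families is reasonable in spirit, but without the finiteness scaffolding above the proof does not close.
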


In the setting of base extensions of elliptic curves $E/\Q$, one may also consider the torsion subgroups that can arise over certain infinite algebraic extensions of $\Q$.  In general these need not be finite, and there may be infinitely many possibilities; but for suitably chosen extensions, this is not the case.
For example, Ribet proved that for an abelian variety defined over a number field $F$, the torsion subgroup of its base change to the maximal cyclotomic extension of $F$ is finite \cite{ribet}.
Here we consider infinite extensions obtained as the compositum of all number fields of a fixed degree~$d$.

\begin{defn}
For each fixed integer $d\ge 1$, let $\Q(d^\infty)$ denote the compositum of all field extensions $F/\Q$ of degree $d$.
More precisely, let $\Qbar$ be a fixed algebraic closure of $\Q$, and define
\[
\Q(d^\infty) := \Q\bigl(\{\beta\in \Qbar: [\Q(\beta):\Q]=d  \}\bigr).
\]
\end{defn}

The fields $\Q(d^\infty)$ have been studied by Gal and Grizzard \cite{grizzard}, who use the notation $\Q^{[d]}$ (they also consider the fields $\Q^{(d)}=\Q^{[2]}\Q^{[3]}\cdots\Q^{[d]}$ and show that $\Q^{[d]}=\Q^{(d)}$ precisely when $d<5$).
For elliptic curves $E/\Q$, the group $E(\Q(d^\infty))$ is not finitely generated.
This was proved for $d=2$ by Frey and Jarden \cite{fj} in 1974, and the result for $d\ge 2$ follows from the inclusion $\Q(2^\infty)\subseteq \Q(d^\infty)$ given by \cite[Theorem 1]{grizzard}.

The torsion subgroups of $E(\Q(d^\infty))$ have been studied in the case $d=2$, in which the field $\Q(2^\infty)$ is the maximal elementary abelian $2$-extension of $\Q$.
Even though $E(\Q(2^\infty))$ is not finitely generated, the torsion subgroup $E(\Q(2^\infty))_\tor$ is known to be finite, and the possible torsion structures have been classified by  Laska and Lorenz \cite{laska}, and Fujita \cite{fujita1,fujita2}.

\begin{thm}[Laska, Lorenz \cite{laska}, Fujita \cite{fujita1,fujita2}]\label{thm-fujita} Let $E/\Q$ be an elliptic curve and let
\[
\Q(2^\infty) := \Q\bigl(\{\sqrt{m}: m\in\Z\}\bigr).
\]
The torsion subgroup $E(\Q(2^\infty))_\tor$ is finite, and
\[
E(\Q(2^\infty))_\tor\simeq
\begin{cases}
\Z/M\Z &\text{with}\ M\in 1,3,5,7,9,15,\ \text{or}\\
\Z/2\Z\oplus \Z/2M\Z &\text{with}\ 1\leq M\leq 6\ \text{or}\ M=8,\ \text{or}\\
\Z/3\Z\oplus \Z/3\Z & \text{or}\\
\Z/4\Z\oplus \Z/4M\Z &\text{with}\ 1\leq M\leq 4,\ \text{or}\\
\Z/2M\Z\oplus \Z/2M\Z &\text{with}\ 3\leq M\leq 4.\\
\end{cases}
\]
\end{thm}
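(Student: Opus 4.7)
The plan is to split the proof into three parts: a prime-bound step, an $\ell$-primary analysis for each small prime, and a realization step. Since $\Q(2^\infty)$ is the maximal elementary abelian $2$-extension of $\Q$, a torsion point $P\in E(\Qbar)$ lies in $E(\Q(2^\infty))_\tor$ if and only if $\Q(P)$ embeds in a multi-quadratic extension of $\Q$, equivalently the Galois closure of $\Q(P)/\Q$ has Galois group of exponent dividing $2$. I translate this into a condition on the image of $\rho_{E,n}\colon \mathrm{Gal}(\Qbar/\Q) \to \GL_2(\Z/n\Z)$: the pointwise stabilizer of $P$ in $\im(\rho_{E,n})$ must have index a power of $2$ and quotient an elementary abelian $2$-group.

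First I would restrict the primes $\ell$ dividing $|E(\Q(2^\infty))_\tor|$. If $P\in E[\ell]$ is $\Q(2^\infty)$-rational for an odd prime $\ell\ge 5$, the Galois orbit of $P$ cannot span all of $E[\ell]$ (otherwise $\Q(\zeta_\ell)\subseteq\Q(2^\infty)$ would force $\ell-1\mid 2$), so $\langle P\rangle$ is $\mathrm{Gal}(\Qbar/\Q)$-stable and yields a $\Q$-rational cyclic $\ell$-isogeny of $E$. The action of $\mathrm{Gal}(\Qbar/\Q)$ on $\langle P\rangle$ is a character $\chi$ into $(\Z/\ell\Z)^\times$ whose image is killed by $2$, hence $\im\chi\subseteq\{\pm 1\}$. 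Combined with Mazur's isogeny theorem, this restricts the primes occurring to $\ell\in\{2,3,5,7\}$, since the remaining Mazur primes $11,13,17,37,43,67,163$ yield characters of non-$2$-power order.

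Next I bound the $\ell$-primary part for each $\ell$. For odd $\ell$ the argument is short: a $\Q(2^\infty)$-rational point of order $\ell^n$ still produces a $\Q$-rational cyclic $\ell^n$-isogeny with character of $2$-power order, forcing $\ell^n\in\{3,9\}$, $\{5\}$, $\{7\}$ for $\ell=3,5,7$ respectively, while full $E[3]$-rationality over $\Q(2^\infty)$ appears only for CM curves with $j=0$, yielding $\Z/3\Z\oplus\Z/3\Z$. The prime $\ell=2$ is delicate: because $\mathrm{Gal}(\Q(E[2])/\Q)\subseteq S_3$ has abelian quotient $\Z/2\Z$, the full $2$-division field always lies in $\Q(2^\infty)$, and iterated $2$-isogenies and $2$-descents can add further $2$-power torsion. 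Here I would invoke the classification of $2$-adic images of rational elliptic curves and, for each possible image of $\rho_{E,2^\infty}$ in $\GL_2(\Z_2)$, compute the maximal subgroup of $E[2^\infty]$ on which the image acts through an elementary abelian $2$-quotient. This yields exactly the $2$-primary structures $\Z/2\Z\oplus\Z/2M\Z$ ($M\le 6$ or $M=8$), $\Z/4\Z\oplus\Z/4M\Z$ ($M\le 4$), and $\Z/2M\Z\oplus\Z/2M\Z$ ($M\in\{3,4\}$) listed in the theorem.

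Finally I would realize each listed group by an explicit example, typically by inheriting from Theorem~\ref{thm-mazur} or Theorem~\ref{thm-najman1} under base change to a suitable (bi)quadratic subfield, and rule out the remaining candidate groups by direct computation on the relevant modular curves $X_1(m,n)$ and their Atkin--Lehner quotients. The main obstacle is the $\ell=2$ analysis: a priori the $2$-primary torsion could grow unboundedly up an infinite tower of quadratic extensions, and cutting it off requires a detailed study of how the $2$-adic image of Galois constrains which $2$-power torsion points become rational after adjoining all square roots --- this is essentially the content of Fujita's theorem, complemented by the earlier Laska--Lorenz analysis for small orders.
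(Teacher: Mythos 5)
First, a point of orientation: the paper does not prove this theorem --- it is quoted from Laska--Lorenz and Fujita and used as an input to the $\Q(3^\infty)$ classification --- so there is no internal proof to compare against. Your overall architecture (bound the primes via the Weil pairing and isogeny characters of exponent dividing $2$, analyze each $\ell$-primary part, realize the survivors) is the right one and mirrors both Fujita's papers and this paper's own treatment of $\Q(3^\infty)$. However, one of your intermediate claims is false in a way that contradicts the very statement you are proving. You assert that ``the full $2$-division field always lies in $\Q(2^\infty)$'' because $\Gal(\Q(E[2])/\Q)\subseteq S_3$. This is wrong: $\Q(E[2])\subseteq\Q(2^\infty)$ forces $\Gal(\Q(E[2])/\Q)$ to be elementary $2$-abelian, which fails exactly when the $2$-division polynomial is irreducible (Galois group $\Z/3\Z$ or $S_3$); in that case every point of order $2$ generates a cubic field and $E(\Q(2^\infty))(2)$ is \emph{trivial}. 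If your claim were true, every group in the list would contain $\Z/2\Z\oplus\Z/2\Z$, whereas the list explicitly contains the odd-order groups $\Z/M\Z$ with $M\in\{1,3,5,7,9,15\}$, which arise precisely from curves with irreducible $2$-division polynomial. (Compare Lemma~\ref{lem-2primary} of the paper, whose dichotomy on whether $E(\Q)[2]$ is trivial is exactly the case distinction you are missing.)

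There are further, smaller problems. Your claim that $E[3]\subseteq E(\Q(2^\infty))$ ``appears only for CM curves with $j=0$'' is also false: the condition is that $\im\rho_{E,3}$ be an elementary abelian $2$-subgroup of $\GL_2(\F_3)$ with surjective determinant, and every such subgroup is conjugate into the split Cartan $\{\pm1\}\times\{\pm1\}$, i.e.\ $E$ admits two independent rational $3$-isogenies. That is a genus-zero, infinite family, and it is what makes $\Z/3\Z\oplus\Z/3\Z$ and $\Z/6\Z\oplus\Z/6\Z$ occur for infinitely many $j$-invariants. To exclude $p=11,13,17,19,37,43,67,163$ the right criterion is that the isogeny character has order \emph{not dividing} $2$, not that its order fails to be a power of $2$ --- for $p=17$ the character has order $8$, a $2$-power, but still too large. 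The hybrid case $\Z/3\Z\oplus\Z/9\Z$ (full $3$-torsion together with a cyclic $9$) is covered by neither of your two $3$-primary arguments and must be excluded separately. Finally, the reduction of the $2$-primary analysis to the Rouse--Zureick-Brown classification is a legitimate (if anachronistic) way to finish, but as written it defers essentially all of the hard content of Fujita's theorem to that single computation.
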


In this article we classify the torsion subgroups $E(\Q(3^\infty))_\tor$ that arise for elliptic curves $E/\Q$.
Our main theorem is the following.

\begin{thm}\label{thm-main} Let $E/\Q$ be an elliptic curve. The torsion subgroup $E(\Q(3^\infty))_\tor$ is finite, and
\[
E(\Q(3^\infty))_\tor\simeq
\begin{cases}
\Z/2\Z\oplus \Z/2M\Z &\text{with}\ M=1,2,4,5,7,8,13, \text{ or}\\
\Z/4\Z\oplus \Z/4M\Z &\text{with}\ M=1,2,4,7,\ \text{or}\\
\Z/6\Z\oplus \Z/6M\Z &\text{with}\ M=1,2,3,5,7,\ \text{or}\\
\Z/2M\Z\oplus \Z/2M\Z &\text{with}\ M=4,6,7,9.\\
\end{cases}
\]
All but $4$ of the torsion subgroups $T$ listed above occur for infinitely many $\Qbar$-isomorphism classes of elliptic curves $E/\Q$; for $T= \Z/4\Z\times\Z/28\Z,\ \Z/6\Z\times\Z/30\Z,\ \Z/6\Z\times\Z/42\Z$, and $\Z/14\Z\times\Z/14\Z$ there are only $2$, $2$, $4$, and $1$ (respectively) $\Qbar$-isomorphism classes of $E/\Q$ for which $E(\Q(3^\infty))_\tor\simeq T$.

\end{thm}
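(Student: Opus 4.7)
My approach translates membership of torsion points in $E(\Q(3^\infty))$ into constraints on the mod-$\ell^n$ Galois representations $\rho_{E,\ell^n}$, then invokes the known classification of such representations for $E/\Q$ to reduce to a finite computation. The first ingredient is a structural understanding of $\Q(3^\infty)$: it is Galois over $\Q$, and its Galois group embeds as a closed subgroup of a product of copies of $S_3$ (via the Galois closures of individual cubics, with cyclic ones contributing $C_3\subseteq S_3$). Thus every finite subextension has degree of the form $2^a3^b$, and one can determine exactly which abelian extensions, and in particular which cyclotomic fields $\Q(\zeta_n)$, are contained in $\Q(3^\infty)$. Via the Weil pairing this constrains the full-$n$-torsion structure: if $E[n]\subseteq E(\Q(3^\infty))$ then $\Q(\zeta_n)\subseteq \Q(3^\infty)$, which immediately rules out many candidate groups.

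To bound the primes $\ell$ that can divide $|E(\Q(3^\infty))_\tor|$, I would fix $E/\Q$ and analyze $G=\im\rho_{E,\ell}\subseteq \GL_2(\F_\ell)$: the existence of $P\in E[\ell]\cap E(\Q(3^\infty))$ of order $\ell$ forces the stabilizer $G_P$ to have $\{2,3\}$-smooth index $[G:G_P]$ in $G$, and moreover the field $\Q(P)$ itself (not merely its degree) must embed in $\Q(3^\infty)$. Combining this with Mazur's isogeny theorem and the classification of possible mod-$\ell$ images of Galois for rational elliptic curves (work of Zywina, Sutherland, Rouse--Zureick-Brown) restricts $\ell$ to a short list, which one expects to be exactly $\{2,3,5,7,13\}$ in view of the groups appearing in the statement. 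An analogous analysis modulo $\ell^n$, together with the second author's bounds on $\ell$-power torsion in small-degree extensions \cite{lozano1,lozano5} and the third author's classifications over quadratic and cubic fields (Theorems~\ref{thm-najman1} and~\ref{thm-najman2}), then bounds the $\ell$-part of $E(\Q(3^\infty))_\tor$ and yields a finite candidate list of torsion structures; the 20 groups in the statement emerge after pruning by the Weil-pairing constraint above.

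Once the candidate list is in hand, realizability must be verified. For the $16$ groups claimed to appear infinitely often, I would exhibit an explicit rational parameterization by identifying a rational positive-dimensional component of the relevant modular curve (typically $X_1(m,n)$ or a twist thereof) whose points yield elliptic curves $E/\Q$ whose base change to $\Q(3^\infty)$ has the desired torsion; for the $4$ sporadic groups, one must enumerate all $\Qbar$-isomorphism classes by locating rational points on higher-genus modular curves. The main obstacle, in my view, is twofold. First, the condition $\Q(P)\subseteq\Q(3^\infty)$ is strictly stronger than ``$[\Q(P):\Q]$ is $\{2,3\}$-smooth'': one must track the actual fixed fields of subgroups of $\im\rho_{E,\ell^n}$ and verify that they embed into the specific compositum $\Q(3^\infty)$, not merely check a degree condition. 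Second, the enumeration of sporadic cases typically requires explicit point-counting on modular curves of positive genus, and is the step where the argument becomes most computational, likely requiring \textsf{Magma} or similar computer-algebra assistance to finalize the short lists of $j$-invariants.
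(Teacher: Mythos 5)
Your overall strategy---translate membership of torsion points in $E(\Q(3^\infty))$ into group-theoretic constraints on $\im\rho_{E,n}$, bound the relevant primes via the Weil pairing and the Mazur--Kenku isogeny classification, and finish with modular-curve computations and explicit parameterizations---is the route the paper takes. However, the ``main obstacle'' you flag at the end is not an incidental difficulty to be worked around: it is the central structural input, and your proposal leaves it unresolved. The paper's resolution (Lemma~\ref{lem-s3char} together with Theorems~\ref{thm-galq3} and~\ref{thm-galq3conv}) is that a number field $L$ lies in $\Q(3^\infty)$ \emph{if and only if} $\Gal(\widehat{L}/\Q)$ is of generalized $S_3$-type, i.e.\ isomorphic to a subgroup of $S_3\times\cdots\times S_3$, equivalently supersolvable with exponent dividing $6$ and abelian Sylow subgroups. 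The nontrivial direction is sufficiency: one decomposes such a group as a direct product of factors $(\Z/3\Z)^{s}\rtimes\Z/2\Z$ and exhibits the fixed field of each factor explicitly as a compositum of cubic fields. This is exactly what makes the condition $\Q(P)\subseteq\Q(3^\infty)$ depend only on the abstract isomorphism type of $\Gal(\widehat{\Q(P)}/\Q)$---there is no need to ``track the actual fixed fields'' or verify embeddings into the specific compositum---and without this reduction the enumeration of subgroups of $\GL_2(\Z/n\Z)$ you propose does not suffice to decide membership. You should supply this converse before the rest of the argument can proceed.

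Two smaller divergences from the paper are worth noting. First, the $2$- and $5$-primary parts are controlled not by the quadratic/cubic classifications you cite but by descending to $\Q(2^\infty)$, invoking the Laska--Lorenz--Fujita classification (Theorem~\ref{thm-fujita}), and applying a divisibility lemma (Lemma~\ref{lem-divisibility}) asserting that $p$-primary torsion cannot grow in a Galois extension of $q$-power degree once the $p$-torsion has stabilized; this exploits the fact that the relevant steps above $\Q(2^\infty)$ inside $\Q(3^\infty)$ have $3$-power degree. Second, the sporadic cases demand more than ``locating rational points'': the relevant modular curves include rank-zero elliptic curves, genus-$2$ curves requiring Chabauty, and (for ruling out $27$-torsion) a genus-$4$ curve analyzed via its genus-$2$ quotients over $\Q(\zeta_3)$, so the computational step you defer is substantial, though it is carried out in full in the paper.
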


\begin{remark}\label{rem-examples}
Minimal conductor examples of elliptic curves $E/\Q$ that realize each of the torsion subgroups permitted by Theorem \ref{thm-main} are listed in the table below.
Here and throughout we identify elliptic curves over $\Q$ by their Cremona label \cite{cremona} and provide a hyperlink to the corresponding entry in the $L$-functions and Modular Forms Database (LMFDB) \cite{lmfdb}.
\smallskip

\begin{center}
\begin{tabular}{l|l||l|l}
	$E/\Q$ & $E(\Q(3^\infty))_\tor$ & $E/\Q$ & $E(\Q(3^\infty))_\tor$\\
	\hline\bigstrut[t]
	\href{http://www.lmfdb.org/EllipticCurve/Q/11a2}{\texttt{11a2}}     & $\Z/2\Z\oplus \Z/2\Z$ &
    \href{http://www.lmfdb.org/EllipticCurve/Q/338a1}{\texttt{338a1}}   & $\Z/4\Z\oplus \Z/28\Z$ \\
	\href{http://www.lmfdb.org/EllipticCurve/Q/17a3}{\texttt{17a3}}     & $\Z/2\Z\oplus \Z/4\Z$ &
    \href{http://www.lmfdb.org/EllipticCurve/Q/20a1}{\texttt{20a1}}     & $\Z/6\Z\oplus \Z/6\Z$ \\
    \href{http://www.lmfdb.org/EllipticCurve/Q/15a5}{\texttt{15a5}}     & $\Z/2\Z\oplus \Z/8\Z$ &
    \href{http://www.lmfdb.org/EllipticCurve/Q/30a1}{\texttt{30a1}}     & $\Z/6\Z\oplus \Z/12\Z$ \\
	\href{http://www.lmfdb.org/EllipticCurve/Q/11a1}{\texttt{11a1}}     & $\Z/2\Z\oplus \Z/10\Z$ &
	\href{http://www.lmfdb.org/EllipticCurve/Q/14a3}{\texttt{14a3}}     & $\Z/6\Z\oplus \Z/18\Z$ \\
	\href{http://www.lmfdb.org/EllipticCurve/Q/26b1}{\texttt{26b1}}     & $\Z/2\Z\oplus \Z/14\Z$ &
	\href{http://www.lmfdb.org/EllipticCurve/Q/50a3}{\texttt{50a3}}     & $\Z/6\Z\oplus \Z/30\Z$ \\
	\href{http://www.lmfdb.org/EllipticCurve/Q/210e1}{\texttt{210e1}}   & $\Z/2\Z\oplus \Z/16\Z$ &
	\href{http://www.lmfdb.org/EllipticCurve/Q/162b1}{\texttt{162b1}}   & $\Z/6\Z\oplus \Z/42\Z$ \\
	\href{http://www.lmfdb.org/EllipticCurve/Q/147b1}{\texttt{147b1}}   & $\Z/2\Z\oplus \Z/26\Z$ &
	\href{http://www.lmfdb.org/EllipticCurve/Q/15a1}{\texttt{15a1}}     & $\Z/8\Z\oplus \Z/8\Z$ \\
	\href{http://www.lmfdb.org/EllipticCurve/Q/17a1}{\texttt{17a1}}     & $\Z/4\Z\oplus \Z/4\Z$ &
	\href{http://www.lmfdb.org/EllipticCurve/Q/30a2}{\texttt{30a2}}     & $\Z/12\Z\oplus \Z/12\Z$ \\
	\href{http://www.lmfdb.org/EllipticCurve/Q/15a2}{\texttt{15a2}}     & $\Z/4\Z\oplus \Z/8\Z$ &
    \href{http://www.lmfdb.org/EllipticCurve/Q/2450a1}{\texttt{2450a1}} & $\Z/14\Z\oplus \Z/14\Z$ \\
	\href{http://www.lmfdb.org/EllipticCurve/Q/210e2}{\texttt{210e2}}   & $\Z/4\Z\oplus \Z/16\Z$ &
	\href{http://www.lmfdb.org/EllipticCurve/Q/14a1}{\texttt{14a1}}     & $\Z/18\Z\oplus \Z/18\Z$ \\
	\hline
\end{tabular}
\end{center}
\bigskip

Magma \cite{magma} scripts to verify these examples, and all other computational results cited herein, are available at~\cite{magmascripts}.
These include explicit models of the modular curves we constructed in the course of proving our theorems, two algorithms to compute $E(\Q(3^\infty))_\tor$ for an elliptic curve $E/\Q$ (one is described in \S\ref{sec:algorithm} and the other is an effective version of Theorem~\ref{thm-param}), and an implementation of the computational strategy that is used to prove Theorem~\ref{thm-param}, which precisely characterizes the sets of elliptic curves that realize each of the subgroups listed in Theorem~\ref{thm-main}, and in particular, which are finite and which are infinite.
\end{remark}

For each of the torsion structures $T$ in Theorem~\ref{thm-main} that arises infinitely often we provide a complete set of rational functions that parameterize the $j$-invariants of the elliptic curves $E/\Q$ for which $E(\Q(3^\infty))_\tor$ contains a subgroup isomorphic to $T$ (for the general member of each family, isomorphism holds), and for those that occur only finitely often we provide a complete list of $j$-invariants; this information appears in Table~\ref{table-param} at the end of the article.

Key to our results are a number of recent advances in our explicit understanding of Galois representations attached to elliptic curves over number fields.
In particular, we rely on work of Rouse and Zureick-Brown \cite{RZB} classifying the 2-adic representations of elliptic curves over $\Q$, work of Zywina~\cite{zywina1} on the possible mod-$p$ representations of an elliptic curve over~$\Q$,
and algorithms developed by the fourth author \cite{sutherland2} for efficiently computing the images of Galois representations of elliptic curves over number fields.

\begin{ack}
The authors would like to thank Robert Grizzard for helpful conversations about the structure of $\Gal(\Q(3^\infty)/ \Q)$ and Jackson Morrow, Jeremy Rouse, David Zureick-Brown, and David Zywina, for their computational assistance, including explicit models for some of the modular curves that appear in this article.
We thank Lukas Pottmeyer and David Zureick-Brown for their feedback on an early draft of this article, and we thank Maarten Derickx for pointing out an error in the proof of Lemma 3.2 that is corrected in this version.
\end{ack}

\section{Notation and terminology}\label{sec-notation}
We fix once and for all an algebraic closure $\Qbar$ that contains all the algebraic extensions of $\Q$ that we may consider, including the fields $\Q(d^\infty)$ and the Galois closure and algebraic closure of every number field.
As usual, for an elliptic curve $E/F$, we use $E[n]$ to denote the $n$-torsion subgroup of $E(\overline{F})$, where $\overline{F}=\Qbar$ when $F$ is a number field.
We recall that $E[n]\simeq \Z/n\Z\oplus\Z/n\Z$, so long as $n$ is prime to the characteristic of $F$, which holds for all the cases we consider.
If $L/F$ is a field extension, we write $E(L)[n]$ for the $n$-torsion subgroup of $E(L)$, and for primes $p$, we write $E(L)(p)$ for the $p$-primary component of $E(L)$.
For any point or set of points $\mathcal{P}$ in $E(\overline{F})$, we write $F(\mathcal{P})$ for the extension generated by the coordinates of $\mathcal{P}$ and $F(x(\mathcal{P}))$ for the extension generated by the $x$-coordinates of $\mathcal{P}$ (we assume $E$ is given by a Weierstrass equation in $x$ and $y$).

For an elliptic curve $E/F$, an $n$-\emph{isogeny} is a cyclic isogeny $\varphi\colon E\to E'$ of degree $n$; this means $\ker\varphi$ is a cyclic subgroup of $E[n]$, and as all the isogenies we consider are separable, this cyclic group has order $n$.
The isogenies $\varphi$ that we consider are also \emph{rational}, meaning that $\varphi$ is defined over~$F$, equivalently, that $\ker\varphi$ is \emph{Galois-stable}: the action of $\Gal(\overline{F}/F)$ on $E[n]$ given by its action on the coordinates of the points $P\in E[n]$ permutes $\ker\varphi\subseteq E[n]$.
To avoid any possible confusion, we will usually state the rationality of $\varphi$ explicitly.
We consider two (separable) isogenies to be distinct only when their kernels are distinct (otherwise they differ only by an isomorphism).

We recall that if $E/\Q$ is an elliptic curve, then for each positive integer $n$ the action of the group $\Gal(\Qbar/\Q)$ on the $\Z/n\Z$-module $E[n]$ induces a \emph{Galois representation} (continuous homomorphism)
\[
\rho_{E,n}\colon \Gal(\Qbar/\Q)\to \textrm{Aut}(E[n])\simeq \GL_2(\Z/n\Z),
\]
whose image we view as a subgroup of $\GL_2(\Z/n\Z)$ (determined only up to conjugacy).
When $n=p$ is prime, we may identify $\GL_2(\Z/p\Z)$ with $\GL_2(\F_p)$.
The extension $\Q(E[n])/\Q$ is Galois, and the homomorphism $\Gal(\Q(E[n])/\Q)\to\GL_2(\Z/n\Z)$ induced by restriction is injective; thus $\Gal(\Q(E[n])/\Q)$ is isomorphic to a subgroup of $\GL_2(\Z/n\Z)$.
This subgroup necessarily contains elements of every possible determinant (each residue class in $(\Z/n\Z)^\times$ contains the norms of infinitely many unramified primes of $\Q(E[n])/\Q$), and an element $\gamma$ with trace $0$ and determinant $-1$ (corresponding to complex conjugation).\footnote{The element $\gamma$ also must act trivially on a maximal cyclic subgroup of $\Z/n\Z\oplus\Z/n\Z$ corresponding to the real line, an additional constraint that is important when $n$ is even; see Remark 3.14 in \cite{sutherland2}.}
We refer the reader to \cite{serre} for further background on Galois representations.

We distinguish two standard subgroups of $\GL_2(\Z/n\Z)$ (up to conjugacy): (1) the \emph{Borel} group of upper triangular matrices, and (2) the \emph{split Cartan} group of diagonal matrices.
Recall that an elliptic curve $E/\Q$ admits a rational $n$-isogeny if and only if the image of $\rho_{E,n}$ in $\GL_2(\Z/n\Z)$ is conjugate to a subgroup of the Borel group (both conditions hold if and only if $E[n]$ contain a Galois-stable cyclic subgroup of order~$n$).
Similarly, $E/\Q$ admits two rational $n$-isogenies whose kernels intersect trivially if and only if the image of $\rho_{E,n}$ in $\GL_2(\Z/n\Z)$ is conjugate to a subgroup of the split Cartan group.

If $H$ is a subgroup of $\GL_2(\Z/n\Z)$ with surjective determinant map that contains $-1$, we use $X_H$ to denote the associated modular curve over $\Q$ whose non-cuspidal rational points parameterize elliptic curves $E/\Q$ for which the image of $\rho_{E,n}$ in $\GL_2(\Z/n\Z)$ is conjugate to a subgroup of $H$.
Certain information about $X_H$, including its genus, can be determined from the congruence subgroup $\Gamma_H$ of $\PSL_2(\Z)$ obtained by taking the inverse image of the intersection of $H$ with $\SL_2(\Z/n\Z)$ in $\PSL_2(\Z)=\SL_2(\Z)/\{\pm 1\}$.
The tables of Cummins and Pauli \cite{cp} contain data for all congruence subgroups of genus up to 24 in which subgroups are identified by labels of the of the form ``$m\textrm{X}^g$", where $m$ is the level, $g$ is the genus, and X is a letter that distinguishes groups of the same level and genus.  We note that the level $m$ of $\Gamma_H$ divides but need not equal $n$, and two non-conjugate $H_1$ and $H_2$ may give rise to the same congruence subgroup $\Gamma_{H_1}=\Gamma_{H_2}$ in $\PSL_2(\Z)$.

\section{The field \texorpdfstring{$\Q(3^\infty)$}{\textit{Q}(3*)}}

As noted in the introduction, the field $\Q(2^\infty)\subseteq \Q(3^\infty)$ is the maximal elementary abelian 2-extension of $\Q$; the number fields in $\Q(2^\infty)$ are precisely those whose Galois group is isomorphic to $(\Z/2\Z)^n$ for some integer $n\ge 0$.
In this section we similarly characterize the number fields in $\Q(3^\infty)$ in terms of their Galois groups.

\begin{defn}\label{def-gens3}
We say that a finite group $G$ is of \textbf{generalized} $\boldsymbol{S_3}$-\textbf{type}, if it is isomorphic to a subgroup of a direct product $S_3\times\cdots\times S_3$ of symmetric groups of degree $3$.
\end{defn}

Recall that a finite group $G$ is \emph{supersolvable} (or supersoluble) if it has a normal cyclic series; an equivalent criterion is that every maximal subgroup of $G$ has prime index \cite{huppert}, or that every subgroup of $G$ is Lagrangian (each subgroup $H$ contains subgroups of every order dividing $|H|$) \cite{zappa}.
The following lemma characterizes finite groups of generalized $S_3$-type.

\begin{lemma}\label{lem-s3char}
A finite group $G$ is of generalized $S_3$-type if and only if (i) $G$ is supersolvable, (ii) the exponent of $G$ divides $6$, and (iii) the Sylow subgroups of $G$ are abelian.
\end{lemma}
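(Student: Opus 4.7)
The plan is to treat the two directions separately. For the forward implication, I verify each of the three properties for $S_3$ (which is supersolvable, has exponent $6$, and has cyclic Sylow subgroups), then observe that they are all inherited by the direct power $S_3^n$ and descend to subgroups. Supersolvability and bounded exponent do so trivially; for the Sylow condition, any Sylow $p$-subgroup of a subgroup $G \leq S_3^n$ embeds into some Sylow $p$-subgroup of $S_3^n$ (namely $(\Z/p\Z)^n$ for $p \in \{2,3\}$), and is therefore abelian.

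For the converse, the first step is to pin down the structure of $G$. Condition (ii) forces $|G| = 2^a 3^b$ for some $a,b \geq 0$, and together with (iii) this gives the Sylow subgroups as $P_2 \cong (\Z/2\Z)^a$ and $P_3 \cong (\Z/3\Z)^b$. A supersolvable group admits a Sylow tower, so its Sylow subgroup for the largest prime is normal; hence $P_3 \trianglelefteq G$, and since $P_3 \cap P_2 = \{1\}$ and $|P_3|\cdot|P_2|=|G|$, we obtain a semidirect decomposition $G \cong P_3 \rtimes P_2$. The core step is then to diagonalize the action $P_2 \to \Aut(P_3) = \GL_b(\F_3)$: its image consists of commuting involutions, and since $x^2 - 1$ splits with distinct roots in $\F_3$, these matrices are simultaneously diagonalizable. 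I fix a basis $e_1, \ldots, e_b$ of $P_3$ along which $P_2$ acts via characters $\chi_i \colon P_2 \to \{\pm 1\}$.

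The embedding $G \hookrightarrow S_3^{a+b}$ is then built one coordinate at a time. For each $i = 1, \ldots, b$, the subspace $N_i = \bigoplus_{j \neq i} \langle e_j \rangle$ is $P_2$-stable and hence normal in $G$, and because $\ker \chi_i$ acts trivially on $P_3/N_i \cong \langle e_i\rangle$, the product $N_i \cdot \ker\chi_i$ is a normal subgroup of $G$ whose quotient is isomorphic to $\Z/3\Z \rtimes (P_2/\ker\chi_i)$, which embeds into $S_3$. This yields a homomorphism $\phi_i \colon G \to S_3$. For each coordinate projection $\pi_j \colon P_2 \to \Z/2\Z$, composing with $G \twoheadrightarrow G/P_3 \cong P_2$ yields $\psi_j \colon G \to \Z/2\Z \hookrightarrow S_3$. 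Combining these $a+b$ maps gives a homomorphism $G \to S_3^{a+b}$: any element of the kernel must lie in $P_3$ (by the $\psi_j$'s) and have every $e_i$-coordinate vanish (by the $\phi_i$'s), so it is trivial.

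The main point of the argument is the simultaneous diagonalization, which is what allows each $\phi_i$ to land in $S_3$ rather than a larger semidirect product. This is the step where all three hypotheses work in concert: abelianness of $P_2$ (from (iii)) yields commuting matrices, the exponent bound (from (ii)) forces them to be involutions, and supersolvability (from (i)) puts $P_3$ at the bottom of a normal series so that $P_2$ acts linearly on it.
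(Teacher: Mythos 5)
Your proof is correct, and the forward direction is the same as the paper's (check $S_3$, note the three properties pass to products and subgroups). The converse, however, takes a genuinely different route. The paper starts from the cyclic normal series guaranteed by supersolvability (with factors of non-increasing prime order), reads off generators $\sigma_j$ of order $3$ and $\tau_i$ of order $2$ with each $\tau_i$ either centralizing or inverting each $\sigma_j$, and then row-reduces the resulting incidence matrix over $\F_2$ --- i.e.\ changes basis on the \emph{$2$-Sylow side} --- to arrive at an explicit internal direct-product decomposition
$G\simeq (\Z/3\Z)^{s_0}\times \prod_i \bigl((\Z/3\Z)^{s_i}\rtimes \Z/2\Z\bigr)$, from which the embedding into a power of $S_3$ is immediate. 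You instead invoke the Sylow-tower property to get $G\simeq P_3\rtimes P_2$ directly, change basis on the \emph{$3$-Sylow side} by simultaneously diagonalizing the commuting involutions in $\GL_b(\F_3)$, and then assemble a separating family of homomorphisms $\phi_i,\psi_j\colon G\to S_3$ rather than a direct-product decomposition. Both arguments are sound and rest on the same underlying structure (elementary abelian Sylow subgroups with a diagonal action of the $2$-part on the $3$-part); your diagonalization step is arguably the cleaner piece of linear algebra. The one thing the paper's version buys that yours does not immediately provide is the explicit decomposition displayed as equation \eqref{eq-gens3}, which is quoted verbatim and used again in the proof of Theorem~\ref{thm-galq3conv}; extracting that decomposition from your setup would still require the dual regrouping step (choosing a basis of $P_2$ adapted to the characters $\chi_i$), which is essentially the paper's $\F_2$ row reduction in disguise.
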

\begin{proof}
For the forward implication, properties (i), (ii), and (iii) are preserved by direct products and subgroups (and quotients).
Thus to show that every finite group $G$ of generalized $S_3$-type has all three properties, it is enough to note that $S_3$ does, which is clearly the case.

For the reverse implication, let $G$ be a finite group satisfying (i), (ii), and (iii).
Property (i) implies that $G$ admits a cyclic normal series whose successive quotients have non-increasing prime orders (see \cite[Thm.\ 5.4.8]{robinson}, for example), and property (ii) then implies that the 3-Sylow subgroup~$V$ is normal.
By properties (ii) and (iii), $V$ is an elementary abelian $3$-group that we may view as an $\F_3$-vector space equipped with an action (via conjugation) of the $2$-sylow subgroup~$W$, an elementary abelian $2$-group; in other words, $G\simeq V\rtimes W\simeq (\Z/3\Z)^m\rtimes (\Z/2\Z)^n$, for some $m,n\ge 0$.

Let $\tau_1,\ldots,\tau_n$ be generators for $W$.
As linear operators on $V$, the $\tau_i$ are diagonalizable (their minimal polynomials $T^2-1\in \F_3[T]$ split into distinct linear factors), and simultaneously diagonalizable, since they commute.  We may thus choose generators $\sigma_1,\ldots,\sigma_m$ for $V$ such that $\tau_i\sigma_j\tau_i=\sigma_j^{\pm 1}$.

Let us now enlarge $G$ to $G'$ by adding generators $\tau_{n+1},\ldots, \tau_{n+m}$ of order 2 that commute with all the $\sigma_i$ and~$\tau_j$ except for the relations $\tau_{n+i}\sigma_j\tau_{n+i}=\sigma_j^{-1}$.  For $1\le i\le n$ let us replace each $\tau_i$ with $\tau_i\tau_{n+j_1}\cdots \tau_{n+j_k}$, where $j_1,\ldots, j_k$ are precisely the indices $j$ for which $\tau_i\sigma_j\tau_i=\sigma_j^{-1}$.  Our new $\tau_1,\ldots,\tau_n$ then commute with all the $\sigma_j$, and we have
\[
G\subseteq G'\simeq \langle\tau_1,\ldots,\tau_n\rangle\times \langle \sigma_1,\tau_{n+1}\rangle\times\cdots\times\langle \sigma_m,\tau_{n+m}\rangle\simeq (\Z/2\Z)^n\times S_3^m\subseteq S_3^{n+m},
\]
so $G$ is of generalized $S_3$-type as claimed.
\end{proof}

\begin{example}
The alternating group $A_4$, the cyclic group $\Z/4\Z$, and the Burnside group $B(2,3)$ (the unique non-abelian group of order 27 and exponent 3) are examples of groups that are not of generalized $S_3$-type; each satisfies only two of the three properties required by Lemma~\ref{lem-s3char}.
\end{example}

\begin{corollary}\label{cor-s3quo}
The product of two groups of generalized $S_3$-type is of generalized $S_3$-type, as is every subgroup and every quotient of a group of generalized $S_3$-type.
\end{corollary}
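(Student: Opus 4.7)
The plan is to split the corollary into its three assertions --- closure under direct products, subgroups, and quotients --- and to dispatch the first two directly from Definition \ref{def-gens3} while handling the third via the intrinsic characterization provided by Lemma \ref{lem-s3char}. This division is natural because the definition in terms of embeddings into $S_3\times\cdots\times S_3$ is well-adapted to products and subgroups but not to quotients, whereas the characterization by supersolvability, exponent dividing $6$, and abelian Sylow subgroups is well-adapted to all three operations.

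For the direct-product and subgroup parts, no Lemma \ref{lem-s3char} is needed. If $G_i \hookrightarrow S_3^{n_i}$ for $i=1,2$, then concatenating the embeddings yields $G_1\times G_2 \hookrightarrow S_3^{n_1+n_2}$. Likewise, any subgroup of a group embedded in $S_3^n$ inherits that embedding. Both statements are immediate from Definition \ref{def-gens3}.

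The quotient case is the only substantive step, since a surjection $G \twoheadrightarrow G/N$ carries no evident embedding of $G/N$ into a power of $S_3$. Here I would invoke Lemma \ref{lem-s3char} and check that each of its three defining properties passes to quotients: (i) supersolvability is preserved under quotients by the standard argument that a cyclic normal series in $G$ projects to one in $G/N$; (ii) the exponent of $G/N$ divides that of $G$ because $(gN)^k = g^kN$, so if the exponent of $G$ divides $6$, so does that of $G/N$; (iii) for each prime $p$, every Sylow $p$-subgroup of $G/N$ is the image of a Sylow $p$-subgroup of $G$ under the quotient map, so it is a homomorphic image of an abelian group and hence abelian. Applying Lemma \ref{lem-s3char} in the reverse direction to $G/N$ then yields the desired conclusion.

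The only real obstacle is item (iii); (i) and (ii) are essentially bookkeeping. The Sylow statement used in (iii) is a textbook consequence of the Sylow theorems (if $\pi\colon G\to G/N$ is the quotient and $P$ is a Sylow $p$-subgroup of $G$, then $\pi(P)=PN/N$ has index in $G/N$ equal to $[G:PN]$, which is prime to $p$), and is the one point at which Lemma \ref{lem-s3char} is genuinely indispensable.
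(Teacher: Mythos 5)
Your proof is correct and matches the paper's intended argument: the corollary is stated without a separate proof, but the proof of Lemma~\ref{lem-s3char} already notes that properties (i)--(iii) are preserved under direct products, subgroups, and quotients, which is exactly the route you take (with products and subgroups also immediate from the definition). Your treatment of the quotient case via Sylow images is the standard and correct way to fill in the one nontrivial detail.
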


Our main goal in this section is to show that the groups that arise as Galois groups of number fields in $\Q(3^\infty)$ are precisely the groups of generalized $S_3$-type.
We first address the forward implication.

\begin{thm}\label{thm-galq3}
Let $L$ be a number field in $\Q(3^\infty)$ with Galois closure $\widehat{L}$.
Then $\widehat{L}\subseteq \Q(3^\infty)$ and
$\Gal(\widehat{L}/\Q)$ is of generalized $S_3$-type. In particular, the exponent of $\Gal(\widehat{L}/\Q)$ divides $6$.
\end{thm}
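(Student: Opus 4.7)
The plan is to reduce the claim for $L$ to a statement about a finite compositum of Galois closures of cubic fields, and then invoke Corollary~\ref{cor-s3quo}.

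First I would observe that $\Q(3^\infty)/\Q$ is itself Galois: the generating set $\{\beta\in\Qbar:[\Q(\beta):\Q]=3\}$ is stable under $\Gal(\Qbar/\Q)$, because any automorphism sends a degree-$3$ element to a degree-$3$ element. Since $L\subseteq\Q(3^\infty)$ is finite over $\Q$, we can choose finitely many cubic fields $K_1,\ldots,K_r\subseteq\Q(3^\infty)$ with $L\subseteq K_1\cdots K_r$.

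Next I would verify that each Galois closure $\widehat{K_i}$ is itself contained in $\Q(3^\infty)$ and that $\Gal(\widehat{K_i}/\Q)\le S_3$. If $K_i/\Q$ is Galois, then $\widehat{K_i}=K_i$. Otherwise $\Gal(\widehat{K_i}/\Q)\simeq S_3$, and $\widehat{K_i}$ equals the compositum $K_iK_i'$ of $K_i$ with any one of its cubic Galois conjugates $K_i'\subseteq\Q(3^\infty)$, because $K_iK_i'\subseteq\widehat{K_i}$ while $[K_iK_i':\Q]\ge 6=[\widehat{K_i}:\Q]$; so $\widehat{K_i}\subseteq\Q(3^\infty)$ in this case as well.

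Setting $M:=\widehat{K_1}\cdots\widehat{K_r}$, the field $M/\Q$ is Galois, is contained in $\Q(3^\infty)$, and the natural restriction map
\[
\Gal(M/\Q)\hookrightarrow \prod_{i=1}^r\Gal(\widehat{K_i}/\Q)\hookrightarrow S_3^r
\]
realizes $\Gal(M/\Q)$ as a group of generalized $S_3$-type. Since $L\subseteq M$ and $M/\Q$ is Galois, the Galois closure $\widehat{L}$ of $L$ satisfies $\widehat{L}\subseteq M\subseteq\Q(3^\infty)$, and $\Gal(\widehat{L}/\Q)$ is a quotient of $\Gal(M/\Q)$. Corollary~\ref{cor-s3quo} then gives that $\Gal(\widehat{L}/\Q)$ is of generalized $S_3$-type, and the exponent statement follows from Lemma~\ref{lem-s3char}(ii) (or directly from the fact that $S_3$ has exponent $6$).

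The only step that requires a bit of care is the identification $\widehat{K_i}=K_iK_i'$ in the non-Galois cubic case, which is the main (mild) obstacle; everything else is a formal consequence of the definition of $\Q(3^\infty)$ together with Corollary~\ref{cor-s3quo}.
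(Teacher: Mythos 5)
Your proposal is correct and follows essentially the same route as the paper's proof: reduce to a finite compositum of cubic fields, observe that each Galois closure $\widehat{K_i}$ is itself a compositum of cubic fields (hence in $\Q(3^\infty)$) with group $\Z/3\Z$ or $S_3$, embed $\Gal(M/\Q)$ into a product of copies of $S_3$, and pass to the quotient $\Gal(\widehat{L}/\Q)$ via Corollary~\ref{cor-s3quo}. The only cosmetic difference is that the paper writes $\widehat{K_i}$ as the compositum of all three conjugate cubic subfields, whereas you use a single conjugate plus a degree count.
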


\begin{proof}
Every number field $L$ in $\Q(3^\infty)$ lies in a compositum of cubic fields $F_1\cdots F_m$.
The compositum of the Galois closures $\widehat{F}_1\cdots \widehat{F}_m$ is a Galois extension $\widehat{F}/\Q$ that contains~$L$, and therefore~$\widehat{L}$, and it is a subfield of $\Q(3^\infty)$, since we can write each $\widehat{F}_i =F_{i,1}F_{i,2}F_{i,3}$ as a compositum of cubic fields $F_{i,j}:=\Q(\alpha_j)$ generated by the roots $\alpha_j$ of an irreducible cubic polynomial defining~$F_i/\Q$.
Each $G_i:=\Gal(\widehat F_i)$ is isomorphic to either $\Z/3\Z$ or $S_3$, both of which are of generalized $S_3$-type, $\Gal(\widehat{F}/\Q)$ is isomorphic to a subgroup of $G_1\times\cdots\times G_m$, hence of generalized $S_3$-type, and $\Gal(\widehat{L}/\Q)$ is isomorphic to a quotient of $\Gal(\widehat{F}/\Q)$, hence also of generalized $S_3$-type, by Corollary~\ref{cor-s3quo}.
\end{proof}

We now prove the converse of Theorem \ref{thm-galq3}.

\begin{thm}\label{thm-galq3conv} Let $L$ be a number field with Galois closure $\widehat{L}$ such that $\Gal(\widehat{L}/\Q)$ is of generalized $S_3$-type. Then $L\subseteq \widehat{L}\subseteq \Q(3^\infty)$. Indeed, $\widehat L$ is the compositum of its quadratic and cubic subfields.
\end{thm}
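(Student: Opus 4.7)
The plan is to use the embedding $G := \Gal(\widehat{L}/\Q) \hookrightarrow S_3 \times \cdots \times S_3$ ($N$ copies) provided by the hypothesis that $G$ is of generalized $S_3$-type. Composing the embedding with the $N$ coordinate projections produces homomorphisms $\pi_i \colon G \to S_3$ whose kernels satisfy $\bigcap_{i=1}^{N} \ker \pi_i = 1$. Let $L_i \subseteq \widehat{L}$ denote the fixed field of $\ker \pi_i$. By the Galois correspondence, $L_i/\Q$ is Galois with $\Gal(L_i/\Q) \simeq \pi_i(G) \leq S_3$, and $L_1 L_2 \cdots L_N$ is the fixed field of $\bigcap \ker \pi_i = 1$, hence equals $\widehat{L}$. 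So it suffices to show each $L_i$ lies in $\Q(3^\infty)$.

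For each $i$, the image $\pi_i(G)$ is one of $\{1\}$, $\Z/2\Z$, $\Z/3\Z$, or $S_3$, and I handle the four cases separately. If $\pi_i(G) = 1$, then $L_i = \Q$ is trivially in $\Q(3^\infty)$. If $\pi_i(G) \simeq \Z/3\Z$, then $L_i$ is itself a cubic field, hence contained in $\Q(3^\infty)$ by the very definition of $\Q(3^\infty)$ as the compositum of all cubic extensions. If $\pi_i(G) \simeq S_3$, then $L_i$ is the Galois closure of any of its three (conjugate) cubic subfields, each of which lies in $\Q(3^\infty)$, so $L_i \subseteq \Q(3^\infty)$ as well. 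The quadratic case $\pi_i(G) \simeq \Z/2\Z$ is covered by the inclusion $\Q(2^\infty) \subseteq \Q(3^\infty)$ (from \cite[Theorem~1]{grizzard}) already invoked in the introduction: every quadratic field lies in $\Q(2^\infty) \subseteq \Q(3^\infty)$.

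Combining the cases and taking the compositum gives $\widehat{L} = L_1 \cdots L_N \subseteq \Q(3^\infty)$, and since $L \subseteq \widehat{L}$ by definition of the Galois closure, the proof is complete.

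I do not anticipate a real obstacle here; the argument is essentially a formal Galois-theoretic unpacking of Definition \ref{def-gens3}, together with the one external input that every quadratic field sits inside $\Q(3^\infty)$. If one preferred to sidestep \cite{grizzard} entirely for the quadratic step, one may argue directly by noting that for any non-square $d \in \Q^\times$ the one-parameter family $f_b(x) = x^3 + bx + b$ has discriminant $-b^2(4b + 27)$, so choosing $b \in \Q$ with $4b + 27 \in -d \cdot (\Q^\times)^2$ and $f_b$ irreducible exhibits a cubic field whose Galois closure contains $\Q(\sqrt d)$.
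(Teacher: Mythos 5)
Your proof is correct, but it takes a genuinely different route from the paper's. The paper does not work with the ambient embedding $G\hookrightarrow S_3\times\cdots\times S_3$ directly; instead it invokes the internal normal form \eqref{eq-gens3} from the proof of Lemma~\ref{lem-s3char}, splits $\widehat{L}$ into linearly disjoint Galois pieces with groups $(\Z/3\Z)^{s_0}$ and $(\Z/3\Z)^{s_i}\rtimes\Z/2\Z$, and then, for each semidirect-product factor, explicitly writes down index-$3$ subgroups $H_{j,k}=\langle \sigma_j^k\tau,\,\sigma_i\ (i\neq j)\rangle$ whose fixed fields are cubic and whose compositum recovers the whole factor as a compositum of $S_3$-extensions of cubic fields. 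Your decomposition via the coordinate projections $\pi_i$ and their kernels is cleaner and avoids the generators-and-relations bookkeeping entirely, reducing everything to the four subgroups of a single $S_3$; the price is that the case $\pi_i(G)\simeq\Z/2\Z$ genuinely requires the extra input that quadratic fields lie in $\Q(3^\infty)$, which you supply by citing $\Q(2^\infty)\subseteq\Q(3^\infty)$ from Grizzard (legitimate --- the paper itself invokes this inclusion in its introduction), with the cubic-discriminant family $x^3+bx+b$ as a self-contained fallback. It is worth noting that your explicit treatment of the quadratic case actually covers an edge case the paper's proof passes over silently: when some $s_i=0$ in \eqref{eq-gens3}, the corresponding factor is a bare $\Z/2\Z$ and the paper's product $\prod_{j=1}^{s}K_{j,0}K_{j,1}K_{j,2}$ is empty, so the quadratic subfield still needs exactly the argument you give. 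The only loose end in your write-up is the optional discriminant construction, where the existence of $b$ making $f_b$ irreducible is asserted rather than proved (Hilbert irreducibility, or a direct check that the reducibility locus meets the parametrized family in only finitely many points, would close it); since your main line relies on the citation rather than on this alternative, the proof stands as written.
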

\begin{proof}
Let $G=\Gal(\widehat{L}/\Q)$, and let $\tau_1,\ldots,\tau_n$ be independent generators for its 2-Sylow subgroup.
As explained in the proof of Lemma~\ref{lem-s3char}, we can choose independent generators $\sigma_1,\ldots,\sigma_m$ for the 3-Sylow subgroup $H$ of $G$ such that $\tau_i\sigma_j\tau_i=\sigma_j^{\pm 1}$ for all $i,j$.  For $1\le j\le m$, let
\[
H_j:=\langle \sigma_1,\ldots,\hat\sigma_j,\ldots,\sigma_m,\tau_1,\ldots,\tau_n\rangle,
\]
where the notation $\hat\sigma_j$ indicates that $\sigma_j$ is excluded.
The fixed field $\widehat L^H$ is equal to the compositum of all the quadratic subfields of $\widehat L$, and each of the fixed fields $\widehat L^{H_j}$ is a cubic subfield of $\widehat L$.
The intersection $H\cap H_1\cap\cdots\cap H_m$ is trivial, and it follows that
\[
L\subseteq \widehat L = \widehat L^H\widehat L^{H_1}\cdots\widehat L^{H_m}\subseteq\Q(2^\infty)\cdot\Q(3^\infty)\subseteq \Q(3^\infty),
\]
since $\Q(2^\infty)\subseteq \Q(3^\infty)$, and that $\widehat L$ is the compositum of its quadratic and cubic subfields.
\end{proof}

We will appeal to Theorems~\ref{thm-galq3} and \ref{thm-galq3conv} repeatedly in the sections that follow; for the sake of brevity we do not cite them in every case.

We conclude this section by determining the roots of unity $\zeta_n$ of prime-power order $n$ that lie in~$\Q(3^\infty)$.
The possible values of $n$ are severely constrained by the fact that if $\zeta_n\in \Q(3^\infty)$, then the exponent of $\Gal(\Q(\zeta_n)/\Q)\simeq (\Z/n\Z)^\times$ must divide 6.

\begin{lemma}\label{lem-roots_of_unity}
Let $n$ be a prime power.  Then $\Q(\zeta_n)\subseteq\Q(3^\infty)$ if and only if $n\in \{2,3,4,7,8,9\}$.
\end{lemma}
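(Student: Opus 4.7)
The plan is to use Theorems~\ref{thm-galq3} and \ref{thm-galq3conv} to reduce the inclusion $\Q(\zeta_n)\subseteq\Q(3^\infty)$ to a purely group-theoretic condition on $\Gal(\Q(\zeta_n)/\Q)\simeq(\Z/n\Z)^\times$, namely that this group be of generalized $S_3$-type, and then enumerate prime powers $n$ for which the condition holds.

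First I would handle the forward direction. Since $\Q(\zeta_n)/\Q$ is already Galois, if $\Q(\zeta_n)\subseteq\Q(3^\infty)$ then Theorem~\ref{thm-galq3} forces the exponent of $(\Z/n\Z)^\times$ to divide $6$. I would then go through the prime powers $n=p^k$ case by case. For $p=2$, the groups $(\Z/2\Z)^\times$, $(\Z/4\Z)^\times$, $(\Z/8\Z)^\times$ have exponents $1,2,2$, while $(\Z/16\Z)^\times\simeq\Z/2\Z\times\Z/4\Z$ already has exponent $4$, so $k\le 3$. For $p=3$ one checks $(\Z/3\Z)^\times\simeq\Z/2\Z$ and $(\Z/9\Z)^\times\simeq\Z/6\Z$, but $(\Z/27\Z)^\times\simeq\Z/18\Z$ fails, so $k\le 2$. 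For odd primes $p\ge 5$, $(\Z/p\Z)^\times$ is cyclic of order $p-1$, and the condition $6\mid?p-1$ and exponent dividing $6$ forces $p-1\in\{1,2,3,6\}$, leaving only $p=7$; and for $p=7$, $(\Z/49\Z)^\times$ has order $42$ and exponent $42$, so only $k=1$ survives. This leaves exactly $n\in\{2,3,4,7,8,9\}$.

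For the reverse direction I would apply Theorem~\ref{thm-galq3conv}. For each $n\in\{2,3,4,7,8,9\}$, $(\Z/n\Z)^\times$ is abelian (hence supersolvable with abelian Sylow subgroups), and by the computations above its exponent divides $6$. Lemma~\ref{lem-s3char} then gives that $\Gal(\Q(\zeta_n)/\Q)$ is of generalized $S_3$-type, so $\Q(\zeta_n)\subseteq\Q(3^\infty)$ by Theorem~\ref{thm-galq3conv}.

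There is no genuine obstacle here, as the work is already done by the two characterization theorems. The only thing to be careful about is making sure the case analysis of prime powers is exhaustive and that the bound on exponents truly eliminates every $n$ outside $\{2,3,4,7,8,9\}$; in particular one should note explicitly why no prime $p\ge 11$ can appear (any such $p$ would give $p-1\ge 10$, incompatible with exponent dividing $6$ in the cyclic group $(\Z/p\Z)^\times$).
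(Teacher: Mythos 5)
Your proposal is correct and follows essentially the same route as the paper: the forward direction uses Theorem~\ref{thm-galq3} to force the exponent of $(\Z/n\Z)^\times$ to divide $6$ and then runs through prime powers, and the reverse direction observes that $(\Z/n\Z)^\times$ is abelian so Lemma~\ref{lem-s3char} and Theorem~\ref{thm-galq3conv} apply. The paper simply packages your case analysis into the formulas $\lambda(2^e)=2^{e-2}$ and $\lambda(p^e)=(p-1)p^{e-1}$ for odd $p$, which is the same computation.
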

\begin{proof}
Suppose $\Q(\zeta_n)\subseteq \Q(3^\infty)$.
Then the exponent $\lambda(n)$ of $\Gal(\Q(\zeta_n)/\Q)\simeq(\Z/n\Z)^\times$ divides 6.
We have $\lambda(2^e)=2^{e-2}$ and $\lambda(p^e)=\varphi(p^e)=(p-1)p^{e-1}$ for primes $p>2$.
It follows that $\lambda(n)$ divides 6 only for $n\in \{2,3,4,7,8,9\}$.
The group $(\Z/n\Z)^\times$ is abelian, hence it is supersolvable and has abelian Sylow subgroups. Lemma~\ref{lem-s3char} and Theorem~\ref{thm-galq3conv} imply $\Q(\zeta_n)\subseteq\Q(3^\infty)$ for $n\in \{2,3,4,7,8,9\}$.
\end{proof}

\section{Finiteness Results}

Our goal in this section is to prove that $E(\Q(3^\infty))_\tor$ is finite.
The only property of $\Q(3^\infty)$ that we actually require is that it is a Galois extension of $\Q$ that contains only a finite number of roots of unity, a property that applies to all the fields $\Q(d^\infty)$.
We thus work in a more general setting.

\begin{thm}\label{thm-finite_over_F}
Let $E/\Q$ be an elliptic curve and let $F$ be a (possibly infinite) Galois extension of~$\Q$ that contains only finitely many roots of unity. Then $E(F)_\tor$ is finite.  Moreover, there is a uniform bound $B$, depending only on $F$, such that $\#E(F)_\tor\le B$ for every elliptic curve $E/\Q$.
\end{thm}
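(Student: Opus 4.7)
The plan is to bound the $p$-primary component $E(F)[p^\infty]$ for each prime $p$ and simultaneously control the finite set of primes that can contribute, so that combining these estimates yields a uniform bound $B$ depending only on $F$.

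First I would exploit the hypothesis that $F/\Q$ is Galois: this makes $\Gal(\overline{\Q}/F)$ normal in $\Gal(\overline{\Q}/\Q)$, and therefore the fixed submodule $E(F)[p^\infty]=E[p^\infty]^{\Gal(\overline{\Q}/F)}$ is stable under the full Galois group $\Gal(\overline{\Q}/\Q)$. Writing $E(F)[p^\infty]\simeq\Z/p^a\Z\oplus\Z/p^b\Z$ with $0\le a\le b$, the full $p^a$-torsion $E[p^a]$ lies in $E(F)$, so the Weil pairing forces $\mu_{p^a}\subseteq F$. Since $F$ contains only finitely many roots of unity, $p^a$ is bounded by a constant depending only on $F$.

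To bound the remaining factor $p^{b-a}$ I would pass to the quotient isogeny $\phi\colon E\to E':=E/E[p^a]$, which is defined over $\Q$ because $E[p^a]$ is $\Gal(\overline{\Q}/\Q)$-stable. The image $\phi(E(F)[p^\infty])$ is then a Galois-stable cyclic subgroup of $E'[p^\infty]$ of order $p^{b-a}$, endowing $E'/\Q$ with a rational cyclic isogeny of that degree. By Mazur's theorem on rational isogenies of prime degree (together with its extension by Kenku for composite cyclic degrees), the set of such degrees that can occur for an elliptic curve over $\Q$ is finite, and consequently $p^{b-a}$ is absolutely bounded.

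Finally, I would control which primes $p$ can divide $|E(F)_\tor|$. If $a\ge 1$ then $\mu_p\subseteq F$, and there are only finitely many such primes by hypothesis; if $a=0$ and $b\ge 1$, then $E(F)[p^\infty]$ is a Galois-stable cyclic subgroup of order $p^b$ and so already equips $E/\Q$ itself with a rational $p$-isogeny, forcing $p\in\{2,3,5,7,11,13,17,19,37,43,67,163\}$ by Mazur. Multiplying the resulting finite set of prime-power bounds yields the uniform bound $B=B(F)$. The main delicate point is checking that $\phi(E(F)[p^\infty])$ is genuinely cyclic of order exactly $p^{b-a}$ and remains Galois-stable after the quotient, so that Mazur--Kenku applies; there is no deeper obstacle, because the $E$-dependence is absorbed into the universal constants provided by the rational isogeny classification.
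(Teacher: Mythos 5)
Your proof is correct and follows essentially the same route as the paper: the Weil pairing together with the finiteness of roots of unity in $F$ bounds the largest $p^a$ with $E[p^a]\subseteq E(F)$, the Galois-stability of $E(F)$ (coming from $F/\Q$ being Galois) converts the cyclic excess $\Z/p^{b-a}\Z$ into a rational cyclic isogeny whose degree is bounded by the Mazur--Kenku classification (Theorem~\ref{thm-isog}), and the same two mechanisms restrict the set of contributing primes. The only local difference is that you produce the rational $p^{b-a}$-isogeny by passing to the quotient $E/E[p^a]$, whereas the paper's Lemma~\ref{lem-j-k_isog} exhibits a Galois-stable cyclic subgroup $\langle p^aQ\rangle$ of $E$ itself via an explicit computation with a basis of $E[p^b]$; your variant is a clean shortcut, and since $E/E[p^a]\cong E$ over $\Q$ the two conclusions agree.
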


Before proving the theorem we first establish some intermediate results.
We begin with the usual consequence of the existence of the Weil pairing.

\begin{prop}\cite[Ch. III, Cor. 8.1.1]{silverman} \label{prop-weil} Let $E/L$ be an elliptic curve with $L\subseteq\Qbar$. For each integer $n\geq 1$, if $E[n]\subseteq E(L)$ then the $n$th cyclotomic field $\Q(\zeta_n)$ is a subfield of $L$.
\end{prop}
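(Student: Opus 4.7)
The plan is the standard Weil-pairing argument: produce a primitive $n$th root of unity as a pairing value on $E[n]$, then use Galois-equivariance of the pairing together with the hypothesis $E[n] \subseteq E(L)$ to place that root of unity inside $L$.

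First I would invoke the Weil pairing $e_n \colon E[n] \times E[n] \to \mu_n$ and recall its four formal properties: $\Z$-bilinearity, alternation, non-degeneracy, and Galois-equivariance. Since $E$ is defined over $L$, Galois-equivariance takes the form $e_n(\sigma P, \sigma Q) = \sigma(e_n(P,Q))$ for every $\sigma \in \Gal(\overline{L}/L)$ and all $P, Q \in E[n]$. These are standard; I would cite Silverman, \emph{The Arithmetic of Elliptic Curves}, III.8 for the construction and for these four properties.

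Next I would fix a $\Z/n\Z$-basis $\{P,Q\}$ of $E[n] \simeq (\Z/n\Z)^2$ and set $\zeta := e_n(P,Q)$. A standard consequence of non-degeneracy together with the alternating property is that $\zeta$ is a primitive $n$th root of unity: the homomorphism $R \mapsto e_n(P,R)$ vanishes on $\langle P\rangle$ by alternation, so it factors through $E[n]/\langle P\rangle \simeq \Z/n\Z$, and non-degeneracy prevents this factorization from landing in a proper subgroup of $\mu_n$, forcing $\zeta = e_n(P,Q)$ to generate $\mu_n$. Finally, under the hypothesis $E[n] \subseteq E(L)$, every $\sigma \in \Gal(\overline{L}/L)$ fixes $P$ and $Q$ pointwise, so Galois-equivariance gives $\sigma(\zeta) = e_n(\sigma P, \sigma Q) = e_n(P,Q) = \zeta$; hence $\zeta \in L$ and $\Q(\zeta_n) = \Q(\zeta) \subseteq L$.

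The argument is entirely formal once one has the Weil pairing in hand, so there is no genuine obstacle; the only point worth stating carefully is the extraction of a primitive $n$th root of unity from a basis pairing value, which is why the excerpt itself is content to cite Silverman rather than reproduce the computation.
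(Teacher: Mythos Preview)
Your argument is correct and is precisely the standard Weil-pairing proof one finds in Silverman. Note, however, that the paper itself does not prove this proposition at all: it simply states the result with a citation to \cite[Ch.~III, Cor.~8.1.1]{silverman}, so there is no ``paper's own proof'' to compare against beyond the reference you have effectively reproduced.
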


This immediately implies the following result.

\begin{lemma}\label{lem-finite_full_tors}
Let $E$ and $F$ be as in Theorem~\ref{thm-finite_over_F}.  Then $E[n]\subseteq E(F)$ for only finitely many $n$.
\end{lemma}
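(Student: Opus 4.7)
The plan is to derive the lemma as an immediate consequence of Proposition \ref{prop-weil} together with the finiteness hypothesis on roots of unity in $F$. First I would argue by contrapositive: suppose $E[n] \subseteq E(F)$ for infinitely many positive integers $n$. Since we may pass to any infinite subset, I would choose an infinite sequence $n_1 < n_2 < \cdots$ of such integers. By Proposition \ref{prop-weil}, each inclusion $E[n_i] \subseteq E(F)$ forces $\Q(\zeta_{n_i}) \subseteq F$, and in particular a primitive $n_i$th root of unity $\zeta_{n_i}$ lies in $F$.

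Next I would observe that these roots of unity are pairwise distinct: the order of $\zeta_{n_i}$ in $\Qbar^\times$ is exactly $n_i$, and since $n_1 < n_2 < \cdots$, the elements $\zeta_{n_1}, \zeta_{n_2}, \ldots$ belong to pairwise distinct cyclic subgroups of $\Qbar^\times$, hence are distinct elements of $F$. This produces infinitely many roots of unity in $F$, contradicting the hypothesis that $F$ contains only finitely many roots of unity. Therefore only finitely many $n$ can satisfy $E[n] \subseteq E(F)$.

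There is no real obstacle here; the content of the lemma is already packaged inside Proposition \ref{prop-weil}. The only thing worth being careful about is ensuring that we extract genuinely distinct roots of unity from the inclusions $\Q(\zeta_{n_i}) \subseteq F$, which is immediate from the fact that a primitive $n$th root of unity has exact multiplicative order $n$.
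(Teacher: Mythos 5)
Your proof is correct and follows exactly the route the paper intends: the paper derives this lemma as an immediate consequence of Proposition~\ref{prop-weil} together with the hypothesis that $F$ contains only finitely many roots of unity, which is precisely your argument. The only detail worth tightening is the phrase about ``pairwise distinct cyclic subgroups''---the clean justification is simply that $\zeta_{n_i}$ has exact multiplicative order $n_i$, so roots of unity of distinct orders are distinct elements, as you note at the end.
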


The following theorem summarizes results of Mazur and Kenku that yield a complete classification of the rational $n$-isogenies that can arise for elliptic curves over $\Q$ (recall that $n$-isogenies are defined to be cyclic).  See \cite[\S9]{lozano1} for further details.

\begin{theorem}\label{thm-isog}\cite{mazur2,kenku2,kenku3,kenku4,kenku5}
Let $E/\Q$ be an elliptic curve with a rational $n$-isogeny. Then
\[
n\leq 19 \text{ or } n \in\{21,25,27,37,43,67,163\}.
\]
\end{theorem}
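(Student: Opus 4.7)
The plan is to reduce the statement to determining the non-cuspidal rational points of the modular curves $X_0(n)$ and then invoke the deep classification due to Mazur (for prime levels) and Kenku (for prime powers and composite levels). I will not attempt to reprove these; rather I outline how the ingredients fit together.

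First I would set up the modular interpretation. A rational cyclic $n$-isogeny $\varphi\colon E\to E'$ with $E/\Q$ corresponds precisely to a non-cuspidal rational point on the modular curve $X_0(n)$, whose non-cuspidal points parameterize pairs $(E,C)$ with $C$ a cyclic subgroup of $E$ of order $n$. Thus the theorem is equivalent to the assertion that $X_0(n)(\Q)$ has no non-cuspidal rational points outside the listed values of $n$. With this reformulation in place, the analysis splits into the prime, prime-power, and composite cases.

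Next, for $n=p$ prime I would invoke Mazur's theorem on isogenies of prime degree. The heart of this argument is the Eisenstein ideal machinery: for $p \geq 23$ with $p \not\in \{37,43,67,163\}$, the Eisenstein quotient of the Jacobian $J_0(p)$ has Mordell-Weil rank zero over $\Q$, and a formal immersion argument at a suitable prime of good reduction forces any rational non-cuspidal point of $X_0(p)$ to reduce to a cusp, leading to a contradiction via the valuation of the difference. For the small primes $p\in\{2,3,5,7,13\}$ the curve $X_0(p)$ has genus zero with infinitely many rational points, and for $p\in\{11,17,19,37,43,67,163\}$ direct inspection shows the only non-cuspidal rational points are CM points. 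This yields the admissible primes in the statement.

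For prime powers $n=p^k$ with $k \geq 2$, a rational $p^k$-isogeny induces a rational $p$-isogeny, so $p$ is restricted to Mazur's list. One then studies $X_0(p^k)$ for these small $p$. Using the fact that $X_0(p^k)$ eventually has genus $\geq 2$ and combining Faltings-style finiteness with explicit point searches (carried out by Kenku), the admissible prime powers beyond primes are exactly $\{25,27\}$ (in addition to $4,8,9,16$, which are $\leq 19$). Finally, for composite $n$ with at least two distinct prime factors, the existence of a rational $n$-isogeny is equivalent to the existence of rational cyclic subgroups of each prime-power order dividing $n$, together with the condition that they combine into a cyclic subgroup of order $n$. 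Kenku's bound on the number of rational cyclic subgroups of a rational elliptic curve, together with case-by-case analysis of the admissible pairs coming from Steps 2 and 3, then pins down the remaining admissible value $n=21$ and rules out everything else.

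The main obstacle, by an enormous margin, is Step 2: Mazur's Eisenstein ideal analysis of $X_0(p)$ is a deep theorem in its own right, and everything else in the proof relies on it. Steps 3 and 4 are comparatively mechanical once that foundation is in place, consisting of an explicit enumeration of cases combined with genus and rank computations on the relevant modular curves.
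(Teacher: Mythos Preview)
The paper does not give a proof of this theorem at all: it is stated with citations to Mazur and Kenku and then used as a black box throughout. So there is no ``paper's own proof'' to compare your proposal against; your outline is essentially a high-level summary of what happens in the cited references, which is appropriate.

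One small historical inaccuracy worth flagging: you invoke ``Faltings-style finiteness'' in the prime-power step, but Kenku's papers (1979--1981) predate Faltings' theorem. Kenku worked with explicit models of the relevant $X_0(n)$, computed ranks of their Jacobians via descent, and enumerated rational points directly. Also, in your composite step, the clause ``together with the condition that they combine into a cyclic subgroup of order $n$'' is superfluous when the prime-power factors are coprime: by the Chinese remainder theorem a product of Galois-stable cyclic subgroups of pairwise coprime orders is automatically cyclic. These are minor points and do not affect the overall correctness of your sketch.
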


Theorem~\ref{thm-isog} limits the primes $p$ for which $E(F)[p]$ can be cyclic.

\begin{lemma}\label{lem-finite_half_tor}
Let $E$ and $F$ be as in Theorem~\ref{thm-finite_over_F}. If $E(F)[p]$ has order $p$ then $p\leq 163$.
\end{lemma}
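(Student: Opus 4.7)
The plan is to reduce the statement directly to Theorem~\ref{thm-isog} (Mazur--Kenku) by producing a rational $p$-isogeny from the hypothesis. Concretely, suppose $C:=E(F)[p]$ has order $p$. Since $F/\Q$ is Galois, for every $\sigma\in\Gal(\Qbar/\Q)$ and every point $P\in E(F)[p]$, the coordinates of $\sigma(P)$ lie in $\sigma(F)=F$, and $\sigma(P)\in E[p]$ since $P$ does. Hence $C$ is stable under the action of $\Gal(\Qbar/\Q)$ on $E[p]$. Because $C$ is a Galois-stable cyclic subgroup of $E[p]$ of order $p$, it is the kernel of a rational $p$-isogeny $\varphi\colon E\to E/C$ (equivalently, the image of $\rho_{E,p}$ lies in a Borel subgroup of $\GL_2(\F_p)$).

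I would then invoke Theorem~\ref{thm-isog}: any prime $p$ that occurs as the degree of a rational cyclic isogeny from an elliptic curve over $\Q$ must satisfy $p\le 19$ or $p\in\{37,43,67,163\}$ (the composite entries $21,25,27$ in the list of Theorem~\ref{thm-isog} are not prime and so are irrelevant). In either case $p\le 163$, which is the conclusion sought.

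The only point that requires care is the Galois-stability of $C$, and this is exactly where the hypothesis ``$F/\Q$ Galois'' is used (without it one only gets stability under $\Gal(\Qbar/F)$, which does not yield a rational isogeny). The hypothesis that $F$ contains only finitely many roots of unity is not needed for this lemma; it is used elsewhere (e.g.\ via Lemma~\ref{lem-finite_full_tors}) to control when the \emph{full} $p$-torsion can be rational. Thus there is really no obstacle here beyond the bookkeeping: once Galois-stability is observed, the whole content is Mazur--Kenku.
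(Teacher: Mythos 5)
Your proof is correct and follows exactly the paper's argument: the Galois hypothesis on $F$ makes $E(F)[p]$ a Galois-stable cyclic subgroup of order $p$, hence the kernel of a rational $p$-isogeny, and Theorem~\ref{thm-isog} then gives $p\le 163$. Your additional remarks (on which hypotheses are actually used) are accurate but not needed.
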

\begin{proof}
The group $H=E(F)[p]$ is stable under the action of $\Gal(F/\Q)$, hence Galois-stable.
If $|H|=p$, then $H$ is the kernel of a rational $p$-isogeny and $p\le 163$, by Theorem \ref{thm-isog}.
\end{proof}

Lemmas~\ref{lem-finite_full_tors} and~\ref{lem-finite_half_tor} together imply that for any elliptic curve $E/\Q$, the $p$-torsion subgroup of $E(F)$ is trivial for all but finitely many primes $p$, and $E[p^k]\subseteq E(F)$ for only finitely many prime powers $p^k$.
It remains only to check that the cyclic prime-power torsion of $E(F)$ is finite.

\begin{lemma}\label{lem-j-k_isog}
Let $E$ and $F$ be as in Theorem~\ref{thm-finite_over_F}, let $p$ be a prime, and let $k$ be the largest integer for which $E[p^k]\subseteq E(F)$.
If $E(F)_\tor$ contains a subgroup isomorphic to $\Z/p^k\Z\oplus\Z/p^j\Z$ with $j\ge k$, then~$E$ admits a rational $p^{j-k}$-isogeny. Moreover, $j-k\leq 4$, $3$, $2$, if $p=2$, $3$, $5$, respectively, $j-k\leq 1$ if $p=7,11,13,17,19,43,67,163$, and $j=k$ otherwise.
\end{lemma}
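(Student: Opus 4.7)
My plan is to pin down the exact structure of $E(F)[p^j]$, extract a Galois-stable cyclic subgroup of order $p^{j-k}$, and conclude via Theorem~\ref{thm-isog}.

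First I would analyze $E(F)[p^j]$ as a subgroup of $E[p^j]\simeq(\Z/p^j\Z)^2$. Writing its invariant factor decomposition as $\Z/p^a\Z\oplus\Z/p^b\Z$ with $a\le b\le j$, the inclusion $E[p^k]=E(F)[p^k]\subseteq E(F)[p^j]$ forces $a\ge k$. Since any subgroup of $(\Z/p^j\Z)^2$ isomorphic to $(\Z/p^{k+1}\Z)^2$ has exponent $p^{k+1}$ and hence must equal $E[p^{k+1}]$, having $a\ge k+1$ would give $E[p^{k+1}]\subseteq E(F)$ and contradict the maximality of $k$; thus $a=k$. The assumed subgroup $\Z/p^k\Z\oplus\Z/p^j\Z$ supplies a point of order $p^j$, so $b=j$, and $E(F)[p^j]\simeq \Z/p^k\Z\oplus\Z/p^j\Z$ exactly.

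Next I would construct the isogeny. Set $C := p^k\cdot E(F)[p^j]$. The kernel of multiplication by $p^k$ on $E(F)[p^j]$ is $E(F)[p^k]=E[p^k]$, of order $p^{2k}$, so $|C|=p^{j-k}$; if $S$ generates the $\Z/p^j\Z$ factor then $C=\langle p^kS\rangle$ is cyclic. Because $F/\Q$ is Galois, $\Gal(\Qbar/\Q)$ preserves $E(F)$, hence $E(F)[p^j]$, and multiplication by $p^k$ is Galois-equivariant, so $C$ is Galois-stable. Therefore $C$ is the kernel of a rational $p^{j-k}$-isogeny $E\to E/C$ defined over $\Q$.

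Finally I would read off the permitted prime-power isogeny degrees from Theorem~\ref{thm-isog}: the prime powers $p^n$ with $n\ge 1$ on the allowed list are $2,4,8,16$ when $p=2$; $3,9,27$ when $p=3$; $5,25$ when $p=5$; and only $p$ itself when $p$ is one of the larger admissible primes $7,11,13,17,19,37,43,67,163$; no rational $p$-isogeny exists for any other prime $p$. These yield the quantitative upper bounds on $j-k$. I do not foresee any serious difficulty: the structural step is elementary linear algebra over $\Z/p^j\Z$, Galois-stability is automatic from $F/\Q$ being Galois, and the bounds follow mechanically from Theorem~\ref{thm-isog}.
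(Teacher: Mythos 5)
Your proof is correct, and it reaches the same kernel as the paper -- the cyclic group generated by $p^kQ$ for $Q$ a point of order $p^j$ in $E(F)$ -- but it justifies Galois-stability by a different and somewhat cleaner mechanism. The paper fixes a basis $\{P,Q\}$ of $E[p^j]$ with $Q\in E(F)$, writes $\sigma(Q)=aP+bQ$, and uses the maximality of $k$ to show $a\equiv 0\bmod p^{j-k}$, from which $\sigma(p^kQ)=b\,p^kQ$ follows by direct computation. You instead use the maximality of $k$ once, at the level of invariant factors, to pin down $E(F)[p^j]\simeq \Z/p^k\Z\oplus\Z/p^j\Z$ exactly (the step that a subgroup of $(\Z/p^j\Z)^2$ isomorphic to $(\Z/p^{k+1}\Z)^2$ must be all of $E[p^{k+1}]$ is correct by an order count inside the $p^{k+1}$-torsion), and then observe that $C=p^k\cdot E(F)[p^j]$ is Galois-stable for free, being the image of a Galois-stable group under the Galois-equivariant map $[p^k]$. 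Your route buys you slightly more (the precise structure of $E(F)[p^j]$) and avoids the coefficient computation; the paper's argument is more hands-on but does not need the structure theorem for finite abelian $p$-groups. One small point in your favor: your list of primes with $j-k\le 1$ correctly includes $p=37$, which the lemma as stated omits (evidently a typo, since Theorem~\ref{thm-isog} allows $n=37$ and the paper itself treats $p=37$ in this case in the proof of Proposition~\ref{prop-possible_primes}).
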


\begin{proof}
Let $Q\in E(F)$ be a point of order $p^j$, and choose $P\in E[p^j]$ so that $\{P,Q\}$ is a $\Z/p^j\Z$-basis for $E[p^j]$. If $\sigma\in\Gal(\Qbar/\Q)$, then $\sigma(Q)\in E(F)$, because $F$ is Galois, and $\sigma(Q)$ is a point of order~$p^j$.
Thus $\sigma(Q)\in E[p^j]$, so $\sigma(Q) = aP+bQ$ for some integers $a$ and $b$.

We claim that $a\equiv 0 \bmod p^{j-k}$. Indeed, the equality $\sigma(Q)=aP+bQ$ implies that
\[
aP = \sigma(Q)-bQ\in E(F),
\]
and if $t$ is the $p$-adic valuation of $a$, then $aP\in E[p^{j-t}]$ and $\{aP,p^tQ\}\subseteq E(F)$ is a $\Z/p^{j-t}\Z$-basis for $E[p^{j-t}]$. By the definition of $k$, we must have $j-t\leq k$, so $j-k\leq t$. Thus $a\equiv 0 \bmod p^{j-k}$, as claimed, and we may write $a=a'p^{j-k}$ for some integer $a'$.

Let $Q_{j-k}:=p^kQ\in E(F)$.
We claim that $\langle Q_{j-k}\rangle$ is $\Gal(\Qbar/\Q)$-stable. Indeed, we have
$$\sigma( Q_{j-k}) = \sigma(p^kQ) =p^k\sigma(Q) = p^k\Big(aP+bQ\Big) = p^k\Big(a'p^{j-k}P+bQ\Big) =a'p^jP+bp^kQ=bQ_{j-k},$$
for any $ \sigma\in\Gal(\Qbar/\Q)$.
Thus $\langle Q_{j-k} \rangle$ is a Galois-stable cyclic subgroup of $E(F)$ of order $p^{j-k}$, and $E\to E/\langle Q_{j-k} \rangle$ is a rational $p^{j-k}$-isogeny.   The bounds on $j-k$ then follow from Theorem~\ref{thm-isog}.
\end{proof}

\begin{proof}[Proof of Theorem \ref{thm-finite_over_F}] To show that $E(F)_\tor$ is finite, it suffices to show that (1) $E(F)_\tor$ has a non-trivial $p$-primary component for only finitely many primes $p$, and (2) for each of these primes~$p$, the $p$-primary component of $E(F)_\tor$ is finite.

\begin{enumerate}
\item Let $n$ be the maximum of $163$ and the largest order of a root of unity in $F$, and let $p>n$ be prime.
Then $E[p]\not\subseteq E(F)$, by Lemma \ref{lem-finite_full_tors}, so if the $p$-primary component of $E(F)_\tor$ is non-trivial, it must be cyclic, and in this case Lemma \ref{lem-finite_half_tor} implies that $p\le 163\le n$, which is a contradiction.
So the $p$-primary part of $E(F)_\tor$ is trivial for all $p>n$.

\item Let $p\le n$ be prime and let $k$ be the largest integer for which $\Q(\zeta_{p^k})\subseteq F$.
It follows from Lemma \ref{lem-j-k_isog} that the cardinality of the $p$-primary part of $E(F)_\tor$ is bounded by $p^{2k+4}$.
\end{enumerate}
The integer $n$ and the maximum value of $k$ over primes $p\le n$ depend only on $F$, as does the bound on $E(F)_\tor$.
This concludes the proof of Theorem \ref{thm-finite_over_F}.
\end{proof}

Lemma~\ref{lem-roots_of_unity} allows us to apply Theorem \ref{thm-finite_over_F} with $F=\Q(3^\infty)$; more generally, we have the following proposition.

\begin{prop}\label{prop-finite_over_Q(3)}
For every $d\ge 2$ the cardinality of $E(\Q(d^\infty))_\tor$ is finite and uniformly bounded as $E$ varies over elliptic curves over $\Q$.
\end{prop}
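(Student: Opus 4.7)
The plan is to reduce the proposition to Theorem \ref{thm-finite_over_F} by checking its two hypotheses for $F = \Q(d^\infty)$. That $\Q(d^\infty)$ is Galois over $\Q$ is immediate from the definition, since the set $\{\beta \in \Qbar : [\Q(\beta):\Q] = d\}$ is stable under $\Gal(\Qbar/\Q)$. Thus the entire substance of the argument is to verify that $\Q(d^\infty)$ contains only finitely many roots of unity.

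For this, I would mimic the strategy of Lemma \ref{lem-roots_of_unity} in the general-$d$ setting. Suppose $\zeta_n \in \Q(d^\infty)$; then $\Q(\zeta_n)$ lies in some compositum $F_1 \cdots F_m$ of degree-$d$ number fields, and hence in the compositum $\widehat{F} := \widehat{F}_1\cdots \widehat{F}_m$ of their Galois closures, which is a finite Galois extension of $\Q$. As in the proof of Theorem \ref{thm-galq3}, $\Gal(\widehat{F}/\Q)$ embeds in $\prod_{i=1}^m \Gal(\widehat{F}_i/\Q)$, and each $\Gal(\widehat{F}_i/\Q)$ embeds in $S_d$ via its action on the $d$ conjugates of a primitive element of $F_i$. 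Since $\Q(\zeta_n)/\Q$ is Galois, $\Gal(\Q(\zeta_n)/\Q) \cong (\Z/n\Z)^\times$ is a quotient of $\Gal(\widehat{F}/\Q)$; consequently its exponent $\lambda(n)$ (the Carmichael function) divides the exponent of $S_d$, which depends only on $d$.

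It remains to deduce that the condition $\lambda(n) \le \exp(S_d) =: B_d$ is satisfied by only finitely many $n$. Writing $n = \prod p_i^{e_i}$, we have $\lambda(n) = \operatorname{lcm}_i \lambda(p_i^{e_i})$, where $\lambda(p^e) = (p-1)p^{e-1}$ for odd primes $p$ and $\lambda(2^e) = 2^{\max(e-2,0)}$. The inequality $\lambda(p_i^{e_i}) \le B_d$ therefore bounds both the primes $p_i$ that can occur (via $p_i - 1 \le B_d$) and the exponents $e_i$ (via $p_i^{e_i - 1} \le B_d$), so there are only finitely many possibilities for $n$. Applying Theorem \ref{thm-finite_over_F} to $F = \Q(d^\infty)$ then yields finiteness of $E(\Q(d^\infty))_\tor$ together with a uniform bound on its cardinality as $E$ ranges over elliptic curves over $\Q$.

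There is no genuine obstacle here beyond assembling these pieces: the Galois-theoretic step reducing the exponent of $(\Z/n\Z)^\times$ to a divisor of $\exp(S_d)$ is the only non-bookkeeping ingredient, and the finiteness of $n$ given a bound on $\lambda(n)$ is elementary. The bound on $\#E(\Q(d^\infty))_\tor$ one extracts depends only on the (eventual) largest root of unity in $\Q(d^\infty)$ and on the constant $163$ from Theorem \ref{thm-isog}, exactly as in the proof of Theorem \ref{thm-finite_over_F}.
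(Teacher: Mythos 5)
Your proposal is correct and follows essentially the same route as the paper: both reduce to Theorem \ref{thm-finite_over_F} by bounding the exponent of the Galois group of any finite Galois subextension of $\Q(d^\infty)$ by the exponent of $S_d$ (the paper cites \cite[Prop.~10]{grizzard} for this while also sketching the same compositum argument you give) and then concluding that only finitely many roots of unity can lie in $\Q(d^\infty)$. The only cosmetic difference is that you bound general $n$ via the Carmichael function where the paper restricts to prime powers $p^k$, which suffices for the same reason.
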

\begin{proof}
It follows from \cite[Prop.\ 10]{grizzard} that for any finite Galois extension $K/\Q$ in $\Q(d^\infty)$, the exponent of $\Gal(K/\Q)$ is bounded.
Indeed, $K$ is a subfield of a compositum of degree-$d$ fields, and $\Gal(K/\Q)$ is isomorphic to a quotient of a subgroup of a direct product of transitive groups of degree $d$, each of which has exponent dividing the exponent $\lambda(S_d)$ of the symmetric group $S_d$.
For all sufficiently large prime powers $p^k$, the exponent $\lambda(p^k) \ge p^k/4$ of $\Gal(\Q(\zeta_{p^k})/\Q)$ is larger than $\lambda(S_d)$, implying that $\zeta_{p^k}\not\in\Q(d^\infty)$.
The proposition then follows from Theorem~\ref{thm-finite_over_F}.
\end{proof}

We now make this result more precise in the case $d=3$ by determining the primes~$p$ for which $E(\Q(3^\infty))(p)$ can be non-trivial.  We first note the following lemma.

\begin{lemma}\label{lem-isog_gal}
Let $E/\Q$ be an elliptic curve that admits a rational $n$-isogeny $\varphi$, and let $P\in E[n]$ be a point of order $n$ in the kernel of $\varphi$.
The field extension $\Q(P)/\Q$ generated by the coordinates of~$P$ is Galois and $\Gal(\Q(P)/\Q)$ is isomorphic to a subgroup of $(\Z/n\Z)^\times$.
In particular, if $n$ is prime then $\Gal(\Q(P)/\Q)$ is cyclic and its order divides $n-1$.
\end{lemma}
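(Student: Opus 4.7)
The plan is to exploit the fact that the Galois action on the cyclic kernel $\langle P \rangle$ factors through its automorphism group $(\Z/n\Z)^\times$, and to identify $\Q(P)$ as the fixed field of the kernel of the resulting character.

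First I would observe that since $\varphi$ is rational, the kernel $C := \ker\varphi$ is stable under $G_\Q := \Gal(\overline{\Q}/\Q)$. Since $C$ is cyclic of order $n$ and $P\in C$ has order $n$, we have $C = \langle P\rangle$. The Galois action on $C$ therefore factors through $\Aut(C)$, yielding a homomorphism
\[
\chi\colon G_\Q \longrightarrow \Aut(C) \cong (\Z/n\Z)^\times,
\]
uniquely determined by the condition $\sigma(P) = \chi(\sigma)\cdot P$ for all $\sigma \in G_\Q$.

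Next I would identify $\Gal(\overline{\Q}/\Q(P))$ with $\ker\chi$. By the definition of $\Q(P) = \Q(x(P),y(P))$, an element $\sigma\in G_\Q$ fixes $\Q(P)$ pointwise if and only if $\sigma(P) = P$, which is equivalent to $\chi(\sigma) = 1$. Since $\ker\chi$ is a normal subgroup of $G_\Q$, the extension $\Q(P)/\Q$ is Galois, and the first isomorphism theorem gives an injection
\[
\Gal(\Q(P)/\Q) \;\cong\; G_\Q/\ker\chi \;\hookrightarrow\; (\Z/n\Z)^\times,
\]
exhibiting $\Gal(\Q(P)/\Q)$ as a subgroup of $(\Z/n\Z)^\times$. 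When $n = p$ is prime, $(\Z/p\Z)^\times$ is cyclic of order $p-1$, so every subgroup is cyclic of order dividing $p-1$, which yields the final assertion.

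There is no real obstacle here: the entire content is that automorphisms of a cyclic group of order $n$ are multiplication-by-unit maps, so the Galois action on a Galois-stable cyclic subgroup of $E[n]$ necessarily factors through $(\Z/n\Z)^\times$.
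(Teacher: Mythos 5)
Your proof is correct and follows essentially the same route as the paper: both arguments rest on the Galois-stability of $\langle P\rangle$, the resulting homomorphism $\sigma\mapsto a$ where $\sigma(P)=aP$, and the observation that its kernel is exactly the subgroup fixing $\Q(P)$. The paper phrases the Galoisness of $\Q(P)/\Q$ via conjugates of coordinates of $P$ landing among the coordinates of the points $aP$, while you phrase it via normality of $\ker\chi$ in $\Gal(\Qbar/\Q)$; these are the same argument in different packaging.
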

\begin{proof}
The fact that $\varphi$ is rational implies that $\langle P\rangle$ is a Galois-stable subgroup of $E[n]$.
It follows that $\Q(P)/\Q$ is Galois: every Galois conjugate of a coordinate of $P$ is necessarily a coordinate of some $aP\in \langle P\rangle$, all of which lie in $\Q(P)$ because $E$ (and therefore the group law on $E$) is defined over~$\Q$.
The homomorphism $\Gal(\Q(P)/\Q)\rightarrow (\Z/n\Z)^\times$ given by $\sigma\mapsto a$, where $\sigma(P)=aP$, is injective, since if $\sigma(P)=\tau(P)$ then $\sigma\tau^{-1}(P)=P$, and this implies $\sigma\tau^{-1}=1$ fixes $\Q(P)$.
\end{proof}

\begin{proposition}\label{prop-possible_primes}
Let $E/\Q$ be an elliptic curve, and let $p$ be a prime dividing the cardinality of $E(\Q(3^\infty))_\tor$. Then $p\in \{2,3,5,7,13\}$.
\end{proposition}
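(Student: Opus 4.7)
The plan is to combine the Mazur--Kenku classification of rational isogenies with the Galois-theoretic description of $\Q(3^\infty)$ from Section~3. First, I would observe that if $p$ divides $|E(\Q(3^\infty))_\tor|$, then $E(\Q(3^\infty))$ contains a point $P$ of order $p$; since $\Q(3^\infty)/\Q$ is Galois, the cyclic subgroup $\langle P\rangle\subseteq E[p]$ is stable under $\Gal(\Qbar/\Q)$, so $E$ admits a rational $p$-isogeny. By Theorem~\ref{thm-isog}, this forces
\[
p\in\{2,3,5,7,11,13,17,19,37,43,67,163\},
\]
and the task is to eliminate the seven primes in $\{11,17,19,37,43,67,163\}$.

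For each candidate prime, Lemma~\ref{lem-isog_gal} shows that $\Q(P)/\Q$ is Galois with $\Gal(\Q(P)/\Q)$ isomorphic to a cyclic subgroup of $(\Z/p\Z)^\times$. Since $\Q(P)\subseteq \Q(3^\infty)$, Theorem~\ref{thm-galq3} implies this cyclic group is of generalized $S_3$-type, which for a cyclic group is equivalent (by Lemma~\ref{lem-s3char}) to having order dividing~$6$. Hence $[\Q(P):\Q]$ divides $\gcd(p-1,6)$, which gives $[\Q(P):\Q]\in\{1,2\}$ when $p\in\{11,17\}$ and $[\Q(P):\Q]\in\{1,2,3,6\}$ when $p\in\{19,37,43,67,163\}$. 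The cases $[\Q(P):\Q]\in\{1,2,3\}$ can then be dispatched by Mazur's Theorem~\ref{thm-mazur} (no $p$-torsion over $\Q$ for $p\ge 11$), Najman's Theorem~\ref{thm-najman1} (the allowed cyclic orders $M\le 10,12,15,16$ over a quadratic field contain none of our primes), and Najman's Theorem~\ref{thm-najman2} (the allowed cyclic orders $M\le 10,12,13,14,18,21$ over a cubic field exclude $p\in\{11,17,19,37,43,67,163\}$). This already settles $p\in\{11,17\}$ completely.

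The remaining obstacle is the degree-$6$ case for $p\in\{19,37,43,67,163\}$. For each such $p$ there are only finitely many $\Qbar$-isomorphism classes of elliptic curves $E/\Q$ admitting a rational $p$-isogeny: $X_0(p)(\Q)$ is finite (rank-zero elliptic curves for $p=11,17,19$ and higher genus, hence Faltings, for $p=37,43,67,163$), and the corresponding $j$-invariants are classical, with $p\in\{19,43,67,163\}$ giving only CM curves. For each of these finitely many curves I would compute the isogeny character $\chi\colon\Gal(\Qbar/\Q)\to (\Z/p\Z)^\times$ describing the Galois action on the kernel of the rational $p$-isogeny (for the CM curves this is given directly by the associated Hecke character modulo $p$) and verify that $\Q(P)=\overline{\Q}^{\ker\chi}$ is not contained in $\Q(3^\infty)$; equivalently, that the order of $\chi$ is not a divisor of $6$. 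This reduces to a short, explicit check on a known finite list of elliptic curves. This verification is the only nontrivial computational step, and it is where I would expect the bulk of the case analysis; once completed, the proposition follows.
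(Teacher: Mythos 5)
Your opening step has a genuine gap. From a point $P$ of order $p$ in $E(\Q(3^\infty))$, the fact that $\Q(3^\infty)/\Q$ is Galois only gives $\sigma(P)\in E(\Q(3^\infty))[p]$ for every $\sigma\in\Gal(\Qbar/\Q)$; it does \emph{not} give $\sigma(P)\in\langle P\rangle$. The Galois-stable subgroup is $E(\Q(3^\infty))[p]$, not $\langle P\rangle$, and the two coincide only when $E(\Q(3^\infty))[p]$ is cyclic. Your blanket claim ``$p$ divides $\#E(\Q(3^\infty))_\tor$ implies $E$ has a rational $p$-isogeny'' is in fact false: every $E/\Q$ has $E[2]\subseteq E(\Q(3^\infty))$ while a generic $E/\Q$ admits no rational $2$-isogeny, and the curves with $j=2268945/128$ have $E(\Q(3^\infty))[7]=E[7]$ without admitting a rational $7$-isogeny. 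To make the step valid for the primes you must eliminate, you first need $E[p]\not\subseteq E(\Q(3^\infty))$ for $p\ge 11$: by the Weil pairing (Proposition~\ref{prop-weil}) full $p$-torsion would force $\zeta_p\in\Q(3^\infty)$, which Lemma~\ref{lem-roots_of_unity} excludes for $p\ge 11$. Only then is the nontrivial group $E(\Q(3^\infty))[p]$ cyclic of order $p$ and hence the kernel of a rational $p$-isogeny. This Weil-pairing step is exactly what the paper's proof supplies and yours omits; it is a needed idea, not a formality.

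Once that is repaired, your elimination of $\{11,17,19,37,43,67,163\}$ diverges from the paper's and is workable but not finished. The paper cites lower bounds on the order of the cyclic group $\Gal(\Q(P)/\Q)$ from \cite{lozano1} (at least $(p-1)/2$ for $p\ne 37$, at least $12$ for $p=37$), which never divides $6$, settling every case at once. Your route via Mazur and Najman correctly kills degrees $1,2,3$, and hence all of $p=11,17$, but for $p\in\{19,37,43,67,163\}$ the degree-$6$ possibility is deferred to an unexecuted finite computation on the known $j$-invariants. That computation would indeed succeed (the curves are CM for $p=19,43,67,163$, with isogeny character of order $(p-1)/2$, and both $j$-invariants for $p=37$ give character order at least $12$), but as written the proof is incomplete without carrying it out; invoking the character-order bounds already in \cite{lozano1} is the shorter way to close it.
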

\begin{proof}
For primes $p\ge 11$, Lemma \ref{lem-roots_of_unity} implies that $\Q(3^\infty)$ does not contain a primitive $p$th root of unity,  and therefore $E[p]\not\subseteq\Q(3^\infty)$, by Proposition~\ref{prop-weil}.
If $p>17$ with $p\neq 37, 43,67,163$, then Lemma \ref{lem-j-k_isog} implies that $E(\Q(3^\infty))[p]$ is trivial.

For the primes $p= 11,17,37, 43,67,$ and $163$, if the $p$-primary part $H$ of $E(\Q(3^\infty))_\tor$ is not trivial then it must be cyclic of order $p$, in which case $E$ admits a rational $p$-isogeny with a point $P\in E(\Q(3^\infty))[p]$ of order $p$ in its kernel.
By Lemma~\ref{lem-isog_gal}, the group $\Gal(\Q(P)/\Q)$ is cyclic, and it follows from Theorems 6.2 and 9.4 of \cite{lozano1} that its order is at least $(p-1)/2$ for $p\neq 37$, and at least $(p-1)/3=12$ for $p=37$.
In each case, the exponent of $\Gal(\Q(P)/\Q)$ cannot divide $6$, and therefore $\Q(P)\not\subseteq\Q(3^\infty)$, by Theorem~\ref{thm-galq3}.  But this contradicts $P\in E(\Q(3^\infty))$, so in fact $E(\Q(3^\infty))[p]$ must be trivial for all $p\ge 11$ except possibly $p=13$.  The proposition follows.
\end{proof}

As can be seen by the examples in Remark~\ref{rem-examples}, all the values of $p$ permitted by Proposition~\ref{prop-possible_primes} actually do arise for some $E/\Q$.
Lemmas~\ref{lem-roots_of_unity} and~\ref{lem-j-k_isog} imply explicit bounds on the prime powers $p^k$ that can divide $E(\Q(3^\infty))_\tor$ (namely, $k\le 10,7,2,3,1$ for $p=2,3,5,7,13$, respectively), but as we will show in the next section, these bounds are not tight.

\section{Maximal \texorpdfstring{$p$}{\textit{p}}-primary components of \texorpdfstring{$E(\Q(3^\infty))_\tor$}{\textit{E}(Q(3*))tors}}\label{sec-max}

In this section we obtain sharp bounds on the $p$-primary components of $E(\Q(3^\infty))$ for elliptic curves $E/\Q$. We will prove the following theorem.
\begin{theorem}\label{thm-upperbound}
Let $E/\Q$ be an elliptic curve. Then $E(\Q(3^\infty))_\tor$ is isomorphic to a subgroup of
\[
T_{\rm max}:= (\Z/8\Z\oplus\Z/16\Z)\oplus(\Z/9\Z\oplus\Z/9\Z)\oplus\Z/5\Z\oplus(\Z/7\Z\oplus\Z/7\Z)\oplus\Z/13\Z,
\]
and $T_{\rm max}$ is the smallest group with this property.
\end{theorem}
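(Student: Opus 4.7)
The proof naturally decomposes by prime. By Proposition \ref{prop-possible_primes} only $p \in \{2,3,5,7,13\}$ can divide $|E(\Q(3^\infty))_\tor|$, so we plan to bound each $p$-primary component $E(\Q(3^\infty))(p) \simeq \Z/p^k\Z \oplus \Z/p^j\Z$ separately. The first move is to collect crude upper bounds: Proposition \ref{prop-weil} together with Lemma \ref{lem-roots_of_unity} constrains $k$ via the roots of unity available in $\Q(3^\infty)$, while Lemma \ref{lem-j-k_isog} combined with Theorem \ref{thm-isog} constrains $j - k$ via rational $p^{j-k}$-isogenies. This yields $k \leq 3, 2, 0, 1, 0$ and $j - k \leq 4, 3, 2, 1, 1$ for $p = 2, 3, 5, 7, 13$ respectively; for $p = 13$ this is already sharp.

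For the odd primes $5$ and $7$ the key refinement uses Lemma \ref{lem-isog_gal}: the kernel generator $Q_{j-k} \in E(\Q(3^\infty))$ of the rational $p^{j-k}$-isogeny produced by Lemma \ref{lem-j-k_isog} has $\Gal(\Q(Q_{j-k})/\Q)$ embedded in the cyclic group $(\Z/p^{j-k}\Z)^\times$, and since $\Q(Q_{j-k}) \subseteq \Q(3^\infty)$, Theorem \ref{thm-galq3} forces $[\Q(Q_{j-k}):\Q] \in \{1, 2, 3, 6\}$. For $p = 5$, combining this with Theorems \ref{thm-mazur} and \ref{thm-najman1} (which exclude $25$-torsion in degree $\leq 2$, the only allowed degrees in this case) is enough. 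For $p = 7$ the residual case is $\Z/7\Z \oplus \Z/49\Z$: here $E[7] \subseteq \Q(3^\infty)$ forces the mod-$7$ image to be of generalized $S_3$-type, and the plan is to analyze the mod-$49$ representation (whose reduction mod $7$ lies in such an image) to rule out any arithmetic lift yielding a point of order $49$ defined over a sextic subfield of $\Q(3^\infty)$.

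The prime $p = 3$ is subtler, because Lemma \ref{lem-j-k_isog} applied to the putative configuration $\Z/9\Z \oplus \Z/27\Z$ (so $k = 2$, $j = 3$) produces only a rational $3$-isogeny, not a rational $27$-isogeny, so the kernel-generator argument alone is too weak. Instead I would analyze the mod-$27$ Galois representation $\rho_{E,27}$ directly: its reduction mod $9$ must be of generalized $S_3$-type (since $E[9] \subseteq \Q(3^\infty)$), and any hypothetical point of order $27$ in $E(\Q(3^\infty))$ would lie in a Galois orbit whose stabilizer's fixed field embeds in $\Q(3^\infty)$. An enumeration of the compatible subgroups of $\GL_2(\Z/27\Z)$, aided by Zywina's classification of mod-$p$ images \cite{zywina1} for the underlying mod-$3$ structure, should exclude this configuration.

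The prime $p = 2$ is the main obstacle: the crude bound $\Z/8\Z \oplus \Z/128\Z$ is far from the claimed $\Z/8\Z \oplus \Z/16\Z$, and no short isogeny-theoretic refinement suffices. Here the plan is to invoke the Rouse--Zureick-Brown classification \cite{RZB} of $2$-adic Galois images of elliptic curves over $\Q$: for each of the finitely many admissible images $H$, identify the largest subgroup of $E[2^\infty]$ fixed by some $H' \leq H$ whose quotient $H/H'$ is of generalized $S_3$-type, and verify by direct inspection that this never exceeds $\Z/8\Z \oplus \Z/16\Z$. Sharpness of $T_{\max}$ then follows from Remark \ref{rem-examples}: the $2$-part is witnessed collectively by \texttt{210e2} (realizing $\Z/4\Z \oplus \Z/16\Z$) and \texttt{15a1} (realizing $\Z/8\Z \oplus \Z/8\Z$), while the primary factors $\Z/9\Z \oplus \Z/9\Z$, $\Z/5\Z$, $\Z/7\Z \oplus \Z/7\Z$, and $\Z/13\Z$ are realized by \texttt{14a1}, \texttt{11a1}, \texttt{2450a1}, and \texttt{147b1} respectively.
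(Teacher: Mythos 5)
Your prime-by-prime decomposition matches the paper's, and several pieces are sound. In particular your treatment of $p=5$ is a genuine (and shorter) alternative: the paper rules out a point of order $25$ by descending to $K_2\cap\Q(2^\infty)$ and invoking Fujita's classification (Theorem~\ref{thm-fujita}) together with Lemma~\ref{lem-divisibility}, whereas your route --- the kernel generator of the rational $25$-isogeny generates a cyclic Galois extension of degree dividing $\gcd(20,6)=2$, contradicting Theorem~\ref{thm-najman1} --- is correct and avoids both. Your $p=2$ plan via \cite{RZB} would also work if executed (note $H'$ must be normal in $H$ for $H/H'$ to make sense, and one should take the minimal such $H'$), though the paper gets the same bound far more cheaply from Theorem~\ref{thm-fujita} plus Lemma~\ref{lem-divisibility}, splitting on whether $E(\Q)[2]$ is trivial.

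The genuine gap is at $p=3$. Your analysis of points of order $27$ assumes $E[9]\subseteq E(\Q(3^\infty))$ and hence only addresses the configuration $\Z/9\Z\oplus\Z/27\Z$; but the bound $T_{\rm max}$ requires that $E(\Q(3^\infty))$ contain \emph{no} point of order $27$, and your own crude bounds leave open $\Z/3\Z\oplus\Z/3^j\Z$ with $j$ up to $4$, where the mod-$9$ image need not be of generalized $S_3$-type. This omitted case is exactly where the paper's Proposition~\ref{prop-no27} does its hardest work. Moreover, even in the case you do address, the claim that an enumeration of subgroups of $\GL_2(\Z/27\Z)$ ``should exclude this configuration'' is not right: the enumeration does \emph{not} come up empty. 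It produces admissible groups whose modular curves have positive genus --- a genus-$1$ curve for $\Z/9\Z\oplus\Z/27\Z$ (Lemma~\ref{lem-9x27}), and for $\Z/3\Z\oplus\Z/27\Z$ curves of genus $1$, $2$, and $4$, the last requiring genus-$2$ quotients over $\Q(\zeta_3)$, a rank-$0$ Jacobian computation, and Chabauty --- and one must prove that all of their rational points are cusps; group theory plus Zywina's mod-$3$ classification cannot substitute for this modular-curve arithmetic. A secondary, smaller gap: for $p=7$ you leave the residual case $\Z/7\Z\oplus\Z/49\Z$ as ``analyze the mod-$49$ representation,'' whereas the paper disposes of it with no computation via Lemma~\ref{lem-piso} (a rational $7$-isogeny forces $7\mid[\Q(E[7]):\Q]$, which is incompatible with $E[7]\subseteq E(\Q(3^\infty))$), an idea absent from your sketch.
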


In order to prove the theorem it suffices to address the $p$-primary components $E(\Q(3^\infty))(p)$ for each of the primes $p=2,3,5,7,13$ permitted by Proposition~\ref{prop-possible_primes}.
We first prove two preliminary results that will be used in the subsections that follow.
We recall that the $\Qbar$-isomorphism class of an elliptic curve $E/\Q$ may be identified with its $j$-invariant $j(E)$.

\begin{proposition}\label{prop-twist}
Let $E/\Q$ be an elliptic curve with $j(E)\ne 1728$.
The isomorphism type of $E(\Q(3^\infty))_\tor$ depends only on the $\Qbar$-isomorphism class of $E$, equivalently, only on $j(E)$.
\end{proposition}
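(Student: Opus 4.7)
The approach is to invoke the theory of twists. Two elliptic curves over $\Q$ share a $j$-invariant precisely when they are $\Qbar$-isomorphic, and the field of definition of the twisting isomorphism is governed by $\Aut(E_{\Qbar})$. The plan is to show that when $j(E)\neq 1728$, this field is always contained in $\Q(3^\infty)$, so the twisting isomorphism descends to a group isomorphism of $\Q(3^\infty)$-points.

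Suppose $E'/\Q$ satisfies $j(E') = j(E)$. If $j(E)\neq 0,1728$ then $\Aut(E_{\Qbar}) = \{\pm1\}$, so $E'$ is a quadratic twist $E^d$ of $E$ for some $d\in\Q^\times$, and the isomorphism $\psi\colon E'\to E$ is defined over $\Q(\sqrt{d})$. If $j(E)=0$, then $\Aut(E_{\Qbar})=\mu_6$; in this case both curves can be put in the form $y^2 = x^3 + a$ and $y^2 = x^3 + b$ with $a,b\in\Q^\times$, and the explicit isomorphism $(x,y)\mapsto(u^2x,u^3y)$ with $u^6=a/b$ is defined over $\Q(u) = \Q((a/b)^{1/6})$. (The exclusion of $j = 1728$ is essential: there twists are quartic and require $\Q(d^{1/4})$, whose Galois closure has exponent $4\nmid 6$ and so is not contained in $\Q(3^\infty)$.)

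Next I would check that in both admissible cases the twisting field lies inside $\Q(3^\infty)$. Every quadratic extension $\Q(\sqrt{d})$ is Galois over $\Q$ with group $\Z/2\Z$, a subgroup of $S_3$ and hence of generalized $S_3$-type, so Theorem~\ref{thm-galq3conv} places it inside $\Q(3^\infty)$. For the sextic case I would use the identity
\[
\Q(d^{1/6}) \;=\; \Q(\sqrt{d},\sqrt[3]{d}),
\]
where $\Q(\sqrt{d}) \subseteq \Q(3^\infty)$ as above and $\Q(\sqrt[3]{d})$ is a cubic field, hence contained in $\Q(3^\infty)$ by definition. The compositum then also lies in $\Q(3^\infty)$.

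To conclude, since $\psi\colon E'\to E$ is defined over a subfield $K\subseteq\Q(3^\infty)$, it commutes with the action of $\Gal(\Qbar/\Q(3^\infty))$ and therefore restricts to a group isomorphism $E'(\Q(3^\infty))\to E(\Q(3^\infty))$; passing to torsion yields $E'(\Q(3^\infty))_\tor \simeq E(\Q(3^\infty))_\tor$. The only genuinely delicate point is the separate treatment of $j(E)=0$, whose sextic twist field looks unfriendly at first glance but is harmless because it factors as a compositum of a quadratic and a cubic field; this is the main (minor) obstacle.
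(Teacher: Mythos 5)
Your proof is correct and follows essentially the same route as the paper: reduce to quadratic twists when $j\ne 0,1728$ and to sextic twists when $j=0$, observe that the twisting field lies in $\Q(3^\infty)$ (the paper cites Silverman \S X.5 for the degree-6 case, while you make the useful explicit decomposition $\Q(d^{1/6})=\Q(\sqrt{d},\sqrt[3]{d})$), and conclude that the isomorphism descends to $\Q(3^\infty)$-points.
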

\begin{proof}
Recall that for $j(E)\ne 0,1728$, if $j(E')=j(E)$ for some $E'/\Q$ then $E'$ is isomorphic to $E$ over $\Q(\sqrt{a})$, for some $a\in\Q$,
and if $j(E)=0$ and $j(E')=j(E)$, then $E'/\Q$ is isomorphic to $E$ over $\Q(\sqrt[6]{a})$, for some $a\in\Q$; see \S X.5 of \cite{silverman}.
In either case, the Galois closure of the minimal extension of $\Q$ over which the base changes of $E$ and $E'$ become isomorphic is isomorphic to a subgroup of $S_3\times C_2$, hence of generalized $S_3$-type.  It follows that if $j(E)=j(E')\ne 1728$ for some $E'/\Q$ then the base changes of $E$ and $E'$ to $\Q(3^\infty)$ are isomorphic and therefore $E(\Q(3^\infty))_\tor\simeq E'(\Q(3^\infty))_\tor$.
\end{proof}

\begin{remark}\label{rem-twist}
When $j(E)=1728$ there are two possibilities: either $E(\Q(3^\infty))_\tor\simeq\Z/2\Z\oplus\Z/2\Z$ or $E(\Q(3^\infty))_\tor\simeq\Z/4\Z\oplus\Z/4\Z$.  These are realized by the elliptic curves \href{http://www.lmfdb.org/EllipticCurve/Q/256b1}{\texttt{256b1}} and \href{http://www.lmfdb.org/EllipticCurve/Q/32a1}{\texttt{32a1}}, respectively. \end{remark}

\begin{lemma}\label{lem-divisibility}
Let $p$ and $q$ be distinct primes, let $K_2/K_1$ be a finite Galois extension of number fields with $[K_2:K_1]$ a power of $q$, and let $E$ be an elliptic curve defined over $\Q$.
\begin{enumerate}
\item If $E(K_1)[p]=E(K_2)[p]$, then $E(K_1)(p)=E(K_2)(p)$, that is, if the $p$-torsion of $E$ does not grow when we move from $K_1$ to $K_2$, then neither does the $p$-primary torsion.
\item Let $\mathcal{P}=E(K_2)[p]$.  Then $E(K_1(\mathcal{P}))(p)= E(K_2)(p)$, that is, the $p$-primary torsion of $E(K_2)$ stabilizes over the extension of $K_1$ generated by the the $p$-torsion of $E(K_2)$.
\end{enumerate}
\end{lemma}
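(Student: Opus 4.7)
The plan is to prove part (1) by a minimality argument combined with the standard trick of averaging over $G = \Gal(K_2/K_1)$, exploiting the fact that $|G|$ is a $q$-power and hence invertible on the $p$-primary abelian group $E(K_2)(p)$; part (2) will then follow by applying (1) to the extension $K_2/K_1(\mathcal{P})$.

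For part (1), suppose for contradiction that $E(K_1)(p) \subsetneq E(K_2)(p)$, and choose $Q$ in the set-theoretic difference of smallest possible order. Then $pQ \in E(K_1)(p)$ by minimality of this order. Because $E$ is defined over $\Q$, every $\sigma \in G$ commutes with multiplication-by-$p$ and fixes $pQ$, so
\[
p\bigl(\sigma(Q) - Q\bigr) = \sigma(pQ) - pQ = 0,
\]
which shows $\sigma(Q) - Q \in E(K_2)[p]$, and by hypothesis the latter equals $E(K_1)[p]$. Writing $\sigma(Q) = Q + R_\sigma$ with $R_\sigma \in E(K_1)[p]$ and summing over $\sigma \in G$ with $m = |G|$ gives
\[
mQ = \sum_{\sigma \in G}\sigma(Q) - \sum_{\sigma \in G} R_\sigma .
\]
The first sum on the right is $G$-invariant, hence lies in $E(K_2)(p)^G = E(K_1)(p)$, while the second sum already lies there, so $mQ \in E(K_1)(p)$. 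Since $m$ is a power of $q$ and $Q$ has $p$-power order with $p\neq q$, there is an integer $m'$ with $mm' \equiv 1$ modulo the order of $Q$, so $Q = m'(mQ) \in E(K_1)(p)$, contradicting the choice of $Q$.

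For part (2), set $L = K_1(\mathcal{P})$. The tower $K_1 \subseteq L \subseteq K_2$ sits inside the Galois extension $K_2/K_1$, so $K_2/L$ is Galois, and $[K_2:L]$ divides $[K_2:K_1]$ and is therefore a power of $q$. By the very definition of $\mathcal{P}$, we have $\mathcal{P} \subseteq E(L)[p] \subseteq E(K_2)[p] = \mathcal{P}$, so $E(L)[p] = E(K_2)[p]$. Applying part (1) to the extension $K_2/L$ then gives $E(L)(p) = E(K_2)(p)$, which is exactly the statement of (2).

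I do not anticipate any serious obstacle. The only subtle point is the initial reduction to a minimal-order counterexample in (1); it is precisely this reduction that forces the cocycle-like differences $\sigma(Q) - Q$ into $E(K_2)[p]$ and so lets the hypothesis on $p$-torsion interact with the averaging-and-inverting-$m$ step.
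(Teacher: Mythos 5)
Your proof is correct, and part (1) is argued by a genuinely different method than the paper's. The paper also reduces to a point $Q\in E(K_2)(p)\setminus E(K_1)(p)$ with $pQ\in E(K_1)(p)$, but then studies the fiber $[p]^{-1}(pQ)$, shows that none of its $p^2$ points lie in $E(K_1)$, and applies the orbit--stabilizer theorem to the action of $\Gal(K_2/K_1)$ on this fiber to force $q\mid p^2$, a contradiction. You instead run the classical averaging (norm/corestriction) argument: the differences $\sigma(Q)-Q$ land in $E(K_2)[p]=E(K_1)[p]$, summing over $G$ puts $mQ$ in $E(K_1)(p)$, and inverting $m=|G|$ modulo the $p$-power order of $Q$ finishes. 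Both proofs are complete (your one slightly delicate step, that $E(K_2)(p)^G=E(K_1)(p)$, follows from $E(K_2)^G=E(K_1)$ for the Galois extension $K_2/K_1$, and the minimal-order choice correctly handles the case $pQ=0$). The trade-off: the paper's fiber-counting is elementary and self-contained but genuinely uses that $[K_2:K_1]$ is a power of a single prime $q$, since it needs every nontrivial orbit size to be divisible by $q$; your averaging argument only uses that $[K_2:K_1]$ is coprime to $p$, so it proves a slightly more general statement (Galois extensions of degree prime to $p$) with no extra effort. Part (2) is deduced from part (1) exactly as in the paper, by applying (1) to the extension $K_2/K_1(\mathcal{P})$.
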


\begin{proof}
We first note that (2) is implied by (1), since $K_1(\mathcal{P})$ has all the properties required of $K_1$ (indeed, $K_1\subseteq K_1(\mathcal{P})\subseteq K_2$, so $K_2/K_1(\mathcal{P})$ and $[K_2:K_1(\mathcal{P})]$ divides $[K_2:K_1]$, so it is a power of $q$).

To prove (1), we assume $E(K_1)[p]=E(K_2)[p]$; (1) clearly holds when this group is trivial, we assume otherwise.
We now suppose for the sake of contradiction that $E(K_1)(p)$ is properly contained in $E(K_2)(p)$.
Then there exists a point $Q\in E(K_2)(p)$ for which $P=pQ$ is a nonzero point in $E(K_1)(p)$, say of order~$p^k$ for some $k\ge 1$.  Then $R=p^{k-1}P$ is a nonzero element of $E(K_1)[p]\subseteq E[p]$, and we may choose $S\in E[p]$ so that $\{R,S\}$ is a $\Z/p\Z$-basis for $E[p]$.

The multiplication-by-$p$ map is a separable endomorphism of degree $p^2$, so there are $p^2$ distinct preimages of~$P$ under multiplication by $p$ (including $Q$); these are precisely the points in the set
\[
\mathcal{Q}:=[p]^{-1}(P)=\{Q+aR + bS : 0\leq a,b < p\}.
\]
Put $\mathcal{Q}_1:=\mathcal{Q}\cap E(K_1)$ and $\mathcal{Q}_2:=\mathcal{Q}\cap E(K_2)$.
Of the $p^2$ points in $\mathcal{Q}$, at least $p$ lie in $E(K_2)$, namely, the points $Q+aR$ (since $Q,R\in E(K_2)$), so $\mathcal{Q}_2$ has cardinality at least $p$.
If its cardinality is greater than $p$, then  $Q+aR+bS\in E(K_2)$ for some $b\not\equiv 0\bmod p$, which implies $bS\in E(K_2)$, and therefore $S\in E(K_2)$, since $b$ is invertible modulo $p$ and $S$ has order $p$.
Thus the cardinality of $\mathcal{Q}_2$ is either $p^2$ or $p$, depending on whether $E(K_2)[p]=E[p]$ or not.

We claim that $\mathcal{Q}_1$ is empty.  For the sake of contradiction, suppose $Q+aR+bS\in \mathcal{Q}_1\subseteq E(K_1)$. We then have $Q+bS\in E(K_1)$, since $R\in E(K_1)$, and since $Q\notin E(K_1)$ by assumption, we must have $b\not\equiv 0 \bmod p$.  This implies $S\in E(K_2)$, since $Q,Q+bS\in E(K_2)$.
But then $S\in E(K_2)[p]=E(K_1)[p]$, so $S\in E(K_1)$, which contradicts $Q\notin E(K_1)$, since $Q+bS\in E(K_1)$.

The Galois group $\Gal(K_2/K_1)$ acts on the set $\mathcal{Q}$, since it is the solution set of $pX=P$, which is stable under $\Gal(K_2/K_1)$ because $P\in E(K_1)$.
The fact that $\mathcal{Q}_1$ is empty implies that this action has no fixed points.
By the orbit-stabilizer theorem, the size of each orbit divides $|\Gal(K_2/K_1)|$, a power of the prime $q$, and since no orbit is trivial, the size of each orbit is divisible by~$q$. It follows that the cardinality $p^2$ of $\mathcal{Q}$ is divisible by $q$, which is a contradiction, since $p$ and $q$ are distinct primes.
Thus our supposition that $E(K_1)(p)\ne E(K_2)(p)$ must be false, which proves (1).
\end{proof}

\subsection{Primes without the possibility of full \texorpdfstring{$\boldsymbol{p}$}{\textit{p}}-torsion (\texorpdfstring{$\boldsymbol{p=5,13}$}{\textit{p}=5,13})}
We start with the primes~$p$ for which $E[p]\not\subseteq E(\Q(3^\infty))$, namely, $p=5,13$.
In these cases $E(\Q(3^\infty))(p)$ is necessarily cyclic.

\begin{lemma}\label{lem-5tor}
Let $E/\Q$ be an elliptic curve.
Then $E(\Q(3^\infty))(5)$ is either trivial or isomorphic to $\Z/5\Z$; the latter holds if and only if $E$ admits a rational $5$-isogeny whose kernel generates an extension of degree at most $2$.
\end{lemma}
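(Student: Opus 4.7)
The plan is to split the claim into three pieces: cyclicity of $E(\Q(3^\infty))(5)$, elimination of order $25$, and the rational-$5$-isogeny characterization of when $\Z/5\Z$ occurs. For cyclicity I would invoke the Weil pairing via Proposition~\ref{prop-weil}: $E[5]\subseteq E(\Q(3^\infty))$ would force $\Q(\zeta_5)\subseteq \Q(3^\infty)$, contradicting Lemma~\ref{lem-roots_of_unity} since $5\notin\{2,3,4,7,8,9\}$. Thus $E(\Q(3^\infty))[5]$ has order at most $5$, and since $E(\Q(3^\infty))(5)$ is a finite abelian $5$-group (finiteness coming from Theorem~\ref{thm-finite_over_F}) whose $5$-torsion subgroup has $\F_5$-dimension at most $1$, the whole $5$-primary part is cyclic.

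For the order-$25$ step, suppose for contradiction that $P\in E(\Q(3^\infty))$ has order $25$. Since $\Q(3^\infty)/\Q$ is Galois and $E(\Q(3^\infty))(5)$ is cyclic, $\langle P\rangle$ is Galois-stable (it is the unique subgroup of order $25$), so $E\to E/\langle P\rangle$ is a rational $25$-isogeny. By Lemma~\ref{lem-isog_gal}, $\Q(P)/\Q$ is Galois with $\Gal(\Q(P)/\Q)\hookrightarrow (\Z/25\Z)^\times\simeq \Z/20\Z$, and Theorem~\ref{thm-galq3} forces this group to have exponent dividing $6$, so its order divides $\gcd(20,6)=2$. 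But then $E$ would have a point of order $25$ defined over $\Q$ or a quadratic field, which is impossible by Mazur's Theorem~\ref{thm-mazur} and Najman's Theorem~\ref{thm-najman1} respectively. The forward direction of the characterization runs the same argument with $(\Z/5\Z)^\times\simeq \Z/4\Z$: a generator $P$ of $E(\Q(3^\infty))(5)\simeq \Z/5\Z$ spans the kernel of a rational $5$-isogeny, and $[\Q(P):\Q]$ divides $\gcd(4,6)=2$. For the converse, if $E$ has a rational $5$-isogeny with kernel $\langle P\rangle$ and $[\Q(P):\Q]\le 2$, then $\Gal(\Q(P)/\Q)$ has order dividing $2$, hence is of generalized $S_3$-type; Theorem~\ref{thm-galq3conv} then places $\Q(P)$ inside $\Q(3^\infty)$, so $P\in E(\Q(3^\infty))(5)$.

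The main obstacle is the elimination of order $25$: the Galois-theoretic constraints alone squeeze $[\Q(P):\Q]$ only down to $\{1,2\}$, not to $\{1\}$, and one genuinely needs both Mazur's classification over $\Q$ and Najman's classification over quadratic fields to close the argument. Notably, isogeny-level information by itself is insufficient, since $X_0(25)$ has genus $0$ and possesses infinitely many non-cuspidal rational points; it is the sharper constraint that the kernel point itself (not just the cyclic subgroup it generates) be defined over a field of degree at most $2$ that does the elimination.
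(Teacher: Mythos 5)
Your proof is correct, and the key step---ruling out a point of order $25$---is handled by a genuinely different argument from the paper's. The paper sets $K_2\subseteq\Q(3^\infty)$ to be the Galois closure of $\Q(P)$ and descends to $K_1=K_2\cap\Q(2^\infty)$, so that $[K_2:K_1]$ is a power of $3$; it then combines Fujita's classification of $E(\Q(2^\infty))_\tor$ (Theorem~\ref{thm-fujita}) with the odd-degree stability result (Lemma~\ref{lem-divisibility}, applied with $p=5$, $q=3$) to show the $5$-primary torsion cannot grow from $K_1$ to $K_2$, handling separately the case where $E(K_1)(5)$ is trivial. You instead observe that $\langle P\rangle$ is the unique order-$25$ subgroup of the cyclic group $E(\Q(3^\infty))(5)$, hence Galois-stable, so Lemma~\ref{lem-isog_gal} embeds $\Gal(\Q(P)/\Q)$ into $(\Z/25\Z)^\times\simeq\Z/20\Z$; the exponent-$6$ constraint of Theorem~\ref{thm-galq3} then forces $[\Q(P):\Q]\mid\gcd(20,6)=2$, and Mazur's theorem together with Najman's quadratic classification (Theorems~\ref{thm-mazur} and~\ref{thm-najman1}) rules out a point of order $25$ over $\Q$ or a quadratic field. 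Your route is shorter and more self-contained at this step, trading the paper's reliance on Fujita's theorem over $\Q(2^\infty)$ for reliance on the quadratic torsion classification; the paper's $K_1/K_2$ descent has the advantage of being the same template it reuses for other primes (notably $p=2$). Your closing remark is also well taken: since $25$ appears in Theorem~\ref{thm-isog}, the existence of a rational $25$-isogeny alone yields no contradiction, and it is the degree bound on the field of definition of the kernel \emph{point} that does the work. The remaining parts of your argument (cyclicity via the Weil pairing, and both directions of the $5$-isogeny characterization) match the paper's.
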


\begin{proof}
It follows from Lemma~\ref{lem-roots_of_unity} and Proposition~\ref{prop-weil} that $E[5]\not\subseteq E(\Q(3^\infty))$, thus $E(\Q(3^\infty))(5)$ is cyclic of order $5^j$ for some $j\ge 0$.
Lemma \ref{lem-j-k_isog} implies, $j\le 2$; we will show that in fact $j\le 1$.
Suppose for the sake of contradiction that $E(\Q(3^\infty))$  contains a point $P$ of order 25.
Let $K:=\Q(P)\subseteq \Q(3^\infty)$, let $K_2\subseteq \Q(3^\infty)$ be the Galois closure of~$K$, and let $K_1:=K_2\cap \Q(2^\infty)$.
Then $[K_2:K_1]$ is a power of~$3$, since  $\Gal(K_2/\Q)$ is of generalized $S_3$-type and
$\Q(3^\infty)/\Q(2^\infty)$ is elementary 3-abelian.
Theorem~\ref{thm-fujita} then implies that $E(K_1)(5)\subseteq E(\Q(2^\infty))(5)$ is either trivial or isomorphic to $\Z/5\Z$.

Suppose first that $E(K_1)(5)$ is trivial. The point $P_1=5P\in E(K_2)$ has order $5$, but $E[5]\not\subseteq E(K_2)$, since $K_2\subseteq E(\Q(3^\infty))$, so $\langle P_1 \rangle\subseteq E(K_2)$ is Galois-stable and therefore the kernel of a rational 5-isogeny.
This implies that $G:=\Gal(\Q(P_1)/\Q)$ is isomorphic to a subgroup of $(\Z/5\Z)^\times$, by Lemma~\ref{lem-isog_gal}.
The group $G$ cannot have order $4$, because it is the Galois group of a number field in $\Q(3^\infty)$ and must have exponent dividing $6$, by Theorem \ref{thm-galq3}.
On the other hand, $G$ cannot have order $1$ or $2$, because then $P_1$ would be defined over a quadratic extension, and therefore over $K_1=K_2\cap \Q(2^\infty)$, contradicting our assumption that $E(K_1)(5)$ is trivial.

We therefore must have $E(K_1)(5)\simeq \Z/5\Z$, in which case $E(K_1)[5]=E(K_2)[5]\simeq \Z/5\Z$, and we may apply Lemma \ref{lem-divisibility} with $p=5$ and $q=3$.
But then $E(K_1)(5)=E(K_2)(5)$, which contradicts our assumption that $E(K_2)$ contains a point of order $25$.
So $j\le 1$ as claimed and $E(\Q(3^\infty))(5)$ is either trivial or isomorphic to $\Z/5\Z$.
In the latter case $E(\Q(3^\infty))(5)$ is a Galois-stable cyclic subgroup of order 5 that is the kernel of a rational 5-isogeny.
It follows from Lemma~\ref{lem-isog_gal} that this kernel generates a cyclic extension $K/\Q$ whose degree divides 4, and in fact it must have degree 2, since $K\subseteq\Q(3^\infty)$ implies that the exponent of $\Gal(K/\Q)$ divides 6.
Conversely, if $E$ admits a rational 5-isogeny whose kernel generates an extension $K/\Q$ of degree at most $2$, then  $K\subseteq\Q(3^\infty)$, by Theorem~\ref{thm-galq3conv}, in which case $E(\Q(3^\infty))(5)\simeq\Z/5\Z$.
\end{proof}

\begin{example}\label{ex-5max}
Any elliptic curve $E/\Q$ with a rational point of order 5 has $E(\Q(3^\infty))(5)\simeq \Z/5\Z$; the curve \href{http://www.lmfdb.org/EllipticCurve/Q/11a1}{\texttt{11a1}} is an example.
Another example is the curve \href{http://www.lmfdb.org/EllipticCurve/Q/50a3}{\texttt{50a3}}, which has trivial rational 5-torsion but admits a rational 5-isogeny whose kernel generates an extension of degree 2.
\end{example}

\begin{lemma}\label{lem-13tor}
Let $E/\Q$ be an elliptic curve. Then, $E(\Q(3^\infty))(13)$ is either trivial or isomorphic to $\Z/13\Z$; the latter holds if and only if $E$ admits a rational $13$-isogeny whose kernel generates an extension of degree dividing $6$.
\end{lemma}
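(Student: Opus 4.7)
The plan is to mirror the structure of the proof of Lemma \ref{lem-5tor}, but the argument should be simpler because the isogeny bounds for $p=13$ are tighter than for $p=5$. By Lemma \ref{lem-roots_of_unity} we have $\zeta_{13}\notin \Q(3^\infty)$, and Proposition \ref{prop-weil} then forces $E[13]\not\subseteq E(\Q(3^\infty))$. Hence $E(\Q(3^\infty))(13)$ is cyclic of order $13^j$ for some $j\ge 0$, and I would take $k=0$ in Lemma \ref{lem-j-k_isog}. Since $13$ lies in the list of primes for which Lemma \ref{lem-j-k_isog} gives $j-k\leq 1$, we immediately obtain $j\leq 1$. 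In other words, $E(\Q(3^\infty))(13)$ is either trivial or isomorphic to $\Z/13\Z$, with no need for the auxiliary argument via $\Q(2^\infty)$ and Lemma \ref{lem-divisibility} that was required in the $p=5$ case to rule out $\Z/25\Z$.

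For the characterization, suppose first that $E(\Q(3^\infty))(13)\simeq \Z/13\Z$, generated by a point $P$. Since $\Q(3^\infty)/\Q$ is Galois, the subgroup $\langle P\rangle$ is $\Gal(\Qbar/\Q)$-stable (any conjugate of $P$ lies in $E(\Q(3^\infty))(13)=\langle P\rangle$), so it is the kernel of a rational $13$-isogeny. By Lemma \ref{lem-isog_gal}, $\Gal(\Q(P)/\Q)$ is cyclic of order dividing $12$. Because $\Q(P)\subseteq \Q(3^\infty)$, Theorem \ref{thm-galq3} forces this Galois group to have exponent dividing $6$; a cyclic group of exponent dividing $6$ has order dividing $6$, so $[\Q(P):\Q]\mid 6$.

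Conversely, suppose $E$ admits a rational $13$-isogeny with kernel $\langle P\rangle$, and that $[\Q(P):\Q]$ divides $6$. Lemma \ref{lem-isog_gal} gives that $\Q(P)/\Q$ is Galois with cyclic Galois group, and any cyclic group of order dividing $6$ satisfies the three conditions of Lemma \ref{lem-s3char} (it is supersolvable, has exponent dividing $6$, and has abelian Sylow subgroups). Thus $\Gal(\Q(P)/\Q)$ is of generalized $S_3$-type, and Theorem \ref{thm-galq3conv} yields $\Q(P)\subseteq \Q(3^\infty)$. Hence $P\in E(\Q(3^\infty))$, and combined with the first half of the lemma this forces $E(\Q(3^\infty))(13)\simeq \Z/13\Z$.

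There is no real obstacle here, since Lemma \ref{lem-j-k_isog} already delivers the sharp bound $j\leq 1$. The only point one should mention explicitly is why no proper cyclic extension of degree $4$ or $12$ can occur for $\Q(P)/\Q$: such extensions fail the exponent-divides-$6$ condition of Theorem \ref{thm-galq3}, which is exactly the reason the list of allowed degrees is $\{1,2,3,6\}$ rather than all divisors of $p-1=12$.
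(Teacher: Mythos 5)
Your proof is correct and follows essentially the same route as the paper's: Lemma~\ref{lem-roots_of_unity} and Proposition~\ref{prop-weil} rule out full $13$-torsion, Lemma~\ref{lem-j-k_isog} with $k=0$ gives $j\le 1$, and the degree characterization comes from Lemma~\ref{lem-isog_gal} combined with Theorems~\ref{thm-galq3} and~\ref{thm-galq3conv}. Your write-up simply spells out the details (Galois-stability of $\langle P\rangle$, why cyclic of exponent dividing $6$ means order dividing $6$) that the paper compresses into two sentences; you are also right that, unlike the $p=5$ case, no auxiliary descent through $\Q(2^\infty)$ is needed here.
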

\begin{proof}
It follows from Lemma~\ref{lem-roots_of_unity} and Proposition~\ref{prop-weil} that $E[13]\not\subseteq E(\Q(3^\infty))$, thus $E(\Q(3^\infty)))$ is cyclic of order $13^j$ for some $j\ge 0$, and Lemma \ref{lem-j-k_isog} implies $j\le 1$.
The last statement follows from Lemma \ref{lem-isog_gal}: the kernel of a 13-isogeny admitted by $E$ generates a cyclic extension $K/\Q$ of degree dividing 12, and then $K\subseteq\Q(3^\infty)$ if and only $[K:\Q]$ divides 6, by Theorems ~\ref{thm-galq3} and \ref{thm-galq3conv}.
\end{proof}

\begin{example}\label{ex-13max}
The curve \href{http://www.lmfdb.org/EllipticCurve/Q/147b1}{\texttt{147b1}} has $E(\Q(3^\infty))\simeq \Z/13\Z$; its 13-division polynomial has a cubic factor, so it has a point of order 13 over an extension whose degree divides 6 (in fact, 3).
\end{example}

\subsection{Primes with the possibility of full \texorpdfstring{$\boldsymbol{p}$}{\textit{p}}-torsion (\texorpdfstring{$\boldsymbol{p=2,3,7}$}{\textit{p}=2,3,7})}
We now consider the primes $p=2,3,7$ for which $\Q(3^\infty)$ contains a primitive $p$th root of unity (so $E[p]\subseteq E(\Q(3^\infty))$ is not immediately ruled out by the Weil pairing).
In this subsection we address $p=2,7$; the case $p=3$ is addressed in the next subsection.

\begin{lemma}\label{lem-2tor}
If $E/\Q$ is an elliptic curve, then $E(\Q(3^\infty))[2]=E[2]\simeq\Z/2\Z\oplus\Z/2\Z.$
\end{lemma}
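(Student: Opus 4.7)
The plan is essentially immediate once we invoke Theorem~\ref{thm-galq3conv}. Write $E$ in short Weierstrass form $y^2=f(x)$ where $f(x)\in\Q[x]$ is a monic cubic with distinct roots. The non-identity points of $E[2]$ are precisely the three points $(r_i,0)$, where $r_1,r_2,r_3$ are the roots of $f$, so the field of definition $L=\Q(E[2])$ is exactly the splitting field of $f$ over $\Q$.

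I would then observe that $\Gal(L/\Q)$ embeds into $S_3$ via its action on $\{r_1,r_2,r_3\}$, hence is one of $\{1\},\,\Z/2\Z,\,\Z/3\Z,\,S_3$. Each of these is a subgroup of $S_3$ and is therefore of generalized $S_3$-type by definition (or by Corollary~\ref{cor-s3quo}). Applying Theorem~\ref{thm-galq3conv} to $L$ (whose Galois closure is $L$ itself) yields $L\subseteq\Q(3^\infty)$, so $E[2]\subseteq E(\Q(3^\infty))$. Combined with the general fact $E[2]\simeq\Z/2\Z\oplus\Z/2\Z$ (since $\mathrm{char}\,\Q=0$), this gives $E(\Q(3^\infty))[2]=E[2]\simeq\Z/2\Z\oplus\Z/2\Z$, as claimed.

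There is no real obstacle here: the only content is that cubic splitting fields automatically lie in $\Q(3^\infty)$, which is built into the definition of the field. The proof is essentially a one-line application of Theorem~\ref{thm-galq3conv}, and so I expect the write-up to be correspondingly short.
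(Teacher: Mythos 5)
Your proof is correct and follows essentially the same route as the paper, which simply notes that the nontrivial $2$-torsion points are $(\alpha,0)$ for $\alpha$ a root of the defining cubic $f$, all of which lie in $\Q(3^\infty)$. Your slightly more explicit detour through the splitting field of $f$ and Theorem~\ref{thm-galq3conv} is fine and changes nothing of substance.
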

\begin{proof}
If we put $E/\Q$ in the form $y^2=f(x)$ with $f(x)$ cubic, the non-trivial points in $E[2]$ are precisely the points of the form $(\alpha,0)$ with $\alpha$ a root of $f$, all of which lie in $\Q(3^\infty)$.
\end{proof}

\begin{lemma}\label{lem-2primary}
Let $E/\Q$ be an elliptic curve.
If $E(\Q)[2]$ is non-trivial then $E(\Q(3^\infty))(2)$ is equal to $E(\Q(2^\infty))(2)$; otherwise $E(\Q(3^\infty))(2)$ is equal to $E[2]$ or $E[4]$ and $E(\Q(2^\infty))(2)$ is trivial.
In either case, $E(\Q(3^\infty))$ is isomorphic to a subgroup of $\Z/8\Z\oplus \Z/8\Z$ or $\Z/4\Z\oplus \Z/16\Z$.
\end{lemma}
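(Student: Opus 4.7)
The plan is to split into two cases based on whether $E(\Q)[2]$ is trivial.

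\textbf{Case 1: $E(\Q)[2] \neq \{O\}$.} The inclusion $E(\Q(2^\infty))(2) \subseteq E(\Q(3^\infty))(2)$ is immediate. For the reverse, the existence of a rational $2$-torsion point means the $2$-division polynomial has a rational root, so $E[2]$ is defined over a field $K_0/\Q$ of degree at most $2$; in particular $K_0 \subseteq \Q(2^\infty)$. Because $E(\Q(3^\infty))(2)$ is finite by Theorem~\ref{thm-finite_over_F}, I would choose a finite Galois extension $L/\Q$ inside $\Q(3^\infty)$ containing both $E(\Q(3^\infty))(2)$ and $K_0$. The structural description \eqref{eq-gens3} of groups of generalized $S_3$-type shows that $\Gal(L/\Q)$ has a normal $3$-Sylow subgroup $H_3$ with elementary $2$-abelian quotient, so $K := L^{H_3}$ lies in $\Q(2^\infty)$, $L/K$ is Galois of degree a power of $3$, and $E(K)[2]=E(L)[2]=E[2]$. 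Lemma~\ref{lem-divisibility}(1) applied with $p=2$, $q=3$ then yields $E(L)(2)=E(K)(2) \subseteq E(\Q(2^\infty))(2)$, from which the required inclusion follows.

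\textbf{Case 2: $E(\Q)[2]=\{O\}$.} Now the $2$-division polynomial of $E$ is irreducible over $\Q$, so every non-zero $T \in E[2]$ satisfies $[\Q(T):\Q]=3$. Since $\Q(2^\infty)$ contains only elements of $2$-power degree, $E(\Q(2^\infty))[2]=\{O\}$ and hence $E(\Q(2^\infty))(2)$ is trivial. By Lemma~\ref{lem-2tor}, $E[2]\subseteq E(\Q(3^\infty))$, so $E(\Q(3^\infty))(2)\simeq \Z/2^a\Z\oplus\Z/2^b\Z$ with $1\le a\le b$. I would next show $a=b$: if $a<b$, then the Galois-stable subgroup $2^{b-1}\cdot E(\Q(3^\infty))(2)$ is cyclic of order exactly $2$, so its unique non-identity element is fixed by $\Gal(\Qbar/\Q)$, producing a non-zero point of $E(\Q)[2]$ and a contradiction. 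With $a=b$ in hand, $E[2^a]\subseteq E(\Q(3^\infty))$ implies $\Q(\zeta_{2^a})\subseteq \Q(3^\infty)$ by Proposition~\ref{prop-weil}, and Lemma~\ref{lem-roots_of_unity} forces $2^a\in\{2,4\}$.

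For the final embedding statement, in Case 2 both $E[2]$ and $E[4]$ embed in $\Z/8\Z\oplus\Z/8\Z$, and in Case 1 the equality $E(\Q(3^\infty))(2)=E(\Q(2^\infty))(2)$ reduces the matter to a finite enumeration: inspecting the $2$-primary part of each group in Theorem~\ref{thm-fujita} confirms that each one embeds in $\Z/8\Z\oplus\Z/8\Z$ or in $\Z/4\Z\oplus\Z/16\Z$.

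The main obstacle is the Case 2 step forcing $a=b$; the key observation is that for $a<b$, the subgroup $2^{b-1}(\Z/2^a\Z\oplus\Z/2^b\Z)$ is cyclic of order $2$, whose non-identity generator must be Galois-fixed, a feature inconsistent with the transitivity of the Galois action on $E[2]\setminus\{O\}$ when $E(\Q)[2]=\{O\}$. This clean divisibility trick avoids invoking the full $2$-adic image classification.
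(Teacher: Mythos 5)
There is a genuine gap in Case 2, at the step bounding the common exponent $a$. You assert that $E[2^a]\subseteq E(\Q(3^\infty))$ together with the Weil pairing and Lemma~\ref{lem-roots_of_unity} forces $2^a\in\{2,4\}$, but Lemma~\ref{lem-roots_of_unity} says that $\Q(\zeta_8)\subseteq\Q(3^\infty)$ as well (indeed $\Gal(\Q(\zeta_8)/\Q)\simeq(\Z/2\Z)^2$, so $\Q(\zeta_8)$ already lies in $\Q(2^\infty)$). The cyclotomic obstruction therefore only yields $2^a\in\{2,4,8\}$, and the possibility $E(\Q(3^\infty))(2)=E[8]$ is not excluded by anything in your argument. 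This is precisely where the paper needs an extra input: an enumeration of the subgroups $G$ of $\GL_2(\Z/8\Z)$ with surjective determinant shows that every such $G$ of generalized $S_3$-type is elementary $2$-abelian, so $\Q(E[8])\subseteq\Q(3^\infty)$ would force $\Q(E[8])\subseteq\Q(2^\infty)$, contradicting the (correctly established) triviality of $E(\Q(2^\infty))(2)$ when $E(\Q)[2]$ is trivial. Without this or an equivalent argument, the claim that $E(\Q(3^\infty))(2)$ equals $E[2]$ or $E[4]$ is unproved; your closing remark that the divisibility trick ``avoids invoking the $2$-adic image classification'' rests on this misreading of Lemma~\ref{lem-roots_of_unity}.

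The remainder of your argument is sound and close in spirit to the paper's. Case 1 follows the paper's route (Theorem~\ref{thm-fujita} plus Lemma~\ref{lem-divisibility}), and your explicit reduction to a finite $3$-power Galois step --- passing to the fixed field of the normal $3$-Sylow subgroup of a generalized $S_3$-type Galois group --- usefully fills in a detail the paper leaves implicit. Your $a=b$ step in Case 2 (multiplying by $2^{b-1}$ to produce a Galois-stable subgroup of order $2$, whose nontrivial element would then be rational) is a clean elementary substitute for the paper's appeal to the nonexistence of a rational $2$-isogeny via Lemma~\ref{lem-j-k_isog}, and the final embedding statement via the $2$-primary parts of the groups in Theorem~\ref{thm-fujita} is correct.
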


\begin{proof}
We first suppose that $E(\Q)[2]$ is non-trivial.
Then $E(\Q(2^\infty))[2]$ is also non-trivial, and therefore $E(\Q(2^\infty))[2] = E[2]$, by Theorem~\ref{thm-fujita}.
Lemma \ref{lem-divisibility} then implies that the 2-primary torsion cannot grow in any 3-power Galois extension of $\Q(E[2])$. Since $\Q(E[2])\subseteq \Q(2^\infty)\subseteq \Q(3^\infty)$, we must have $E(\Q(3^\infty))(2)=E(\Q(2^\infty))(2)$, and Theorem~\ref{thm-fujita} then implies that $E(\Q(3^\infty))(2)$ is isomorphic to a subgroup of $\Z/8\Z\oplus\Z/8\Z$ or $\Z/4\Z\oplus\Z/16\Z$.

We now suppose that $E(\Q)[2]$ is trivial.
Then $E(\Q(2^\infty))(2)$ is also trivial: if $E\colon y^2=f(x)$ has no rational points of order 2 then the cubic $f$ must be irreducible, in which case every point of order 2 generates a field of degree 3.
We also note that $E$ cannot admit a rational 2-isogeny, since the unique point of order 2 in the kernel of such an isogeny would be Galois-stable, hence rational.
Thus~$E$ does not admit a rational $2^j$-isogeny for any $j>0$;
Lemma \ref{lem-j-k_isog} then implies $E(\Q(3^\infty))(2)\simeq \Z/2^k\Z \times \Z/2^k\Z$ for some $k\ge 0$, and
Proposition~\ref{prop-weil} and Lemma~\ref{lem-roots_of_unity} imply $k\le 3$.
To show $k<3$, we note that an enumeration (in Magma) of the subgroups $G$ of $\GL_2(\Z/8\Z)$ with surjective determinant maps finds that whenever $G$ is of generalized $S_3$-type, it is actually elementary 2-abelian.
This implies that if $\Q(E[8])\subseteq \Q(3^\infty)$ then in fact $\Q(E[8])\subseteq\Q(2^\infty)$, but we have assumed that $E(\Q[2])$ is trivial, hence $E(\Q(2^\infty))(2)$ is trivial, so this cannot occur.
\end{proof}

\begin{example}\label{ex-2max}
The elliptic curves \href{http://www.lmfdb.org/EllipticCurve/Q/15a1}{\texttt{15a1}} and  \href{http://www.lmfdb.org/EllipticCurve/Q/210e2}{\texttt{210e2}} realize the maximal possibilities $\Z/8\Z\oplus\Z/8\Z$ and $\Z/4\Z\oplus\Z/16\Z$, respectively, for $E(\Q(3^\infty))(2)$.
\end{example}

Before addressing the $7$-primary component of $E(\Q(3^\infty))$, we prove a lemma that relates the degree of the $p$-torsion field $\Q(E[p])$ of $E/\Q$ to the number of rational $p$-isogenies admitted by $E$ (we consider two isogenies to be distinct only if their kernels are distinct).

\begin{lemma}\label{lem-piso}
Let $E/\Q$ be an elliptic curve and let $p>2$ be a prime for which $E$ admits a rational $p$-isogeny. Then $[\Q(E[p]):\Q]$ is relatively prime to $p$ if and only if $E$ admits two rational $p$-isogenies (with distinct kernels).
For $p>5$ this implies that $p$ divides $[\Q(E[p]):\Q]$.
\end{lemma}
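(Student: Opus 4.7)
My approach is to translate the statement into a question about the mod-$p$ image $G := \rho_{E,p}(\Gal(\Qbar/\Q))\subseteq \GL_2(\F_p)$, using that $|G|=[\Q(E[p]):\Q]$. Since $E$ admits a rational $p$-isogeny, I can choose a basis of $E[p]$ so that $G$ lies in the Borel subgroup $B$ of upper-triangular matrices. Decompose $B=U\rtimes T$, where $U$ is the unipotent radical (the unique $p$-Sylow of $B$, of order $p$) and $T$ is the split Cartan of diagonal matrices (of order $(p-1)^2$); under this setup $p\nmid|G|$ if and only if $G\cap U=\{1\}$.

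First I would prove the ``if'' direction of the biconditional. Two distinct rational $p$-isogenies with kernels $C_1,C_2\subseteq E[p]$ must satisfy $C_1\cap C_2=\{0\}$ since $p$ is prime, hence $E[p]=C_1\oplus C_2$ as Galois modules, so in a basis adapted to this splitting the image $G$ acts diagonally, sits inside $T$, and has order dividing $(p-1)^2$, which is coprime to $p$. For the converse, assuming $G\cap U=\{1\}$, I set $H:=UG\subseteq B$ and observe that both $G$ and $T\cap H$ are complements to $U$ in $H$; by Schur--Zassenhaus applied to the extension $1\to U\to H\to H/U\to 1$ (which has coprime orders), these two complements are $H$-conjugate, so $G$ is $B$-conjugate to a subgroup of $T$. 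The two coordinate lines of this split Cartan are then Galois-stable cyclic subgroups of $E[p]$ and produce two distinct rational $p$-isogenies of~$E$.

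For the final assertion, Theorem~\ref{thm-isog} restricts $p>5$ (admitting a rational $p$-isogeny) to $p\in\{7,11,13,17,19,37,43,67,163\}$, and by the biconditional it suffices to show that no $E/\Q$ has mod-$p$ image contained in a split Cartan for these primes. For non-CM $E/\Q$ this is covered by the Bilu--Parent--Rebolledo theorem for $p\geq 11$, $p\neq 13$, and by Zywina's classification \cite{zywina1} of mod-$p$ images when $p\in\{7,13\}$. For CM $E/\Q$, I would argue that the image can only fit inside a split Cartan if $p$ splits in the CM order, but then complex conjugation interchanges the two primes of the CM order above $p$ and hence the two eigenlines of the Cartan, so it lies in the normalizer but not in the Cartan itself; if $p$ is inert or ramified the image sits instead in a non-split Cartan normalizer or a Borel, which is again not a split Cartan.

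The main obstacle is this last step: ruling out split Cartan mod-$p$ images in the non-CM case at the small primes $p=7$ and $p=13$, which is genuinely deep and relies on the analysis of rational points on the modular curves $X_{\mathrm{sp}}(7)$ and $X_{\mathrm{sp}}(13)$ that underlies the cited classifications. By contrast, the biconditional itself is a routine Schur--Zassenhaus computation inside a Borel subgroup, and the CM case of the last assertion reduces to an elementary observation about how complex conjugation acts on the CM decomposition of $E[p]$.
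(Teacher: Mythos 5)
Your argument is correct in outline but takes a markedly different route from the paper's, and in the last step a much heavier one. For the biconditional, the paper quotes the decomposition $B=B_dB_1$ of a subgroup of the Borel into its diagonal and unipotent parts (Lemma~2.2 of \cite{lozano4}), so that $p\nmid|B|$ forces $B=B_d$; your Schur--Zassenhaus argument (conjugating $G$ into the torus inside $H=UG$) is a self-contained substitute for this and is fine. The real divergence is in the final assertion. The paper disposes of it by citing Kenku \cite{kenku}: an image contained in the split Cartan \emph{proper} can occur only for $p\le 5$. The underlying reason is that two independent rational $p$-isogenies on $E$ yield a rational cyclic $p^2$-isogeny on an isogenous curve (equivalently $X_{\mathrm{sp}}(p)\simeq X_0(p^2)$), and the Mazur--Kenku classification of cyclic isogenies --- Theorem~\ref{thm-isog}, already a standing ingredient of the paper --- excludes $p^2=49,121,169,\ldots$. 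You instead pass through the \emph{normalizer} of the split Cartan, invoking Bilu--Parent--Rebolledo, Zywina for $p=7,13$, and a separate CM analysis. This does reach the conclusion, but it conflates the Cartan with its normalizer: only the normalizer is genuinely deep, and the detour forces you to treat $p=13$ and the CM curves as special cases. Moreover, your CM discussion has a soft spot in the ramified subcase: ``the image sits in a Borel'' does not by itself preclude its sitting in the split Cartan, since the Borel contains the split Cartan; one would have to argue that $E[p]\simeq\mathcal{O}/\mathfrak{p}^2$ has a unique Galois-stable line, or again fall back on the absence of rational $p^2$-isogenies. All of these complications evaporate if you replace the BPR/Zywina/CM apparatus with the single observation that a split-Cartan image is equivalent to a non-cuspidal rational point on $X_0(p^2)$.
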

\begin{proof}
The hypothesis implies that the image of $\rho_{E,p}$ is conjugate to a subgroup $B$ of the Borel group in $\GL_2(\Z/p\Z)$.  Lemma~2.2 of \cite{lozano4} implies that $B=B_dB_1$ where
\[
B_1 := B\cap \left\{ \left(\begin{array}{cc} 1 & b \\ 0 & 1\end{array}\right) : b\in\Z/p\Z \right\}, \text{ and }
B_d := B\cap \left\{ \left(\begin{array}{cc} a & 0 \\ 0 & c\end{array}\right) : a,c\in(\Z/p\Z)^\times \right\}.
\]
Thus the order of $B\simeq \Gal(\Q(E[p])/\Q)$ is relatively prime to $p$ if and only if $B_1$ is trivial, equivalently, $B=B_d$ is a subgroup of the split Cartan group, in which case $E$ admits two rational $p$-isogenies with distinct kernels.  However, as proved in \cite{kenku}, this can only occur for $p\leq 5$.
\end{proof}

\begin{lemma}\label{lem-7tor}
Let $E/\Q$ be an elliptic curve.
Then $E(\Q(3^\infty))(7)$ is isomorphic to a subgroup of $\Z/7\Z\oplus\Z/7\Z$.
The case $E(\Q(3^\infty))(7)\simeq \Z/7\Z\oplus\Z/7\Z$ occurs if and only if $j(E)=2268945/128$, and the case $E(\Q(3^\infty))(7)\simeq \Z/7\Z$ occurs if and only if $E$ admits a rational $7$-isogeny, equivalently,
\[
j(E) = \frac{(t^2+13t+49)(t^2+5t+1)^3}{t},
\]
for some $t\in\Q^\times$.
\end{lemma}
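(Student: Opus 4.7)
The plan is to prove the three assertions in order: the upper bound on $E(\Q(3^\infty))(7)$, the characterization of the cyclic case, and the characterization of the bicyclic case.

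First, I would establish that $E(\Q(3^\infty))(7)\subseteq E[7]$. Let $k$ be the largest integer with $E[7^k]\subseteq E(\Q(3^\infty))$, so that $E(\Q(3^\infty))(7)\simeq \Z/7^k\Z\oplus\Z/7^j\Z$ with $j\ge k$, and suppose for contradiction that $j\ge 2$. If $k\ge 2$, then $E[49]\subseteq E(\Q(3^\infty))$, forcing $\Q(\zeta_{49})\subseteq \Q(3^\infty)$ by Proposition~\ref{prop-weil}, contradicting Lemma~\ref{lem-roots_of_unity}. Otherwise $k\le 1$ and Lemma~\ref{lem-j-k_isog} (with $p=7$) gives $j-k\le 1$, which together with $j\ge 2$ forces $k=1$ and $j=2$. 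In this case Lemma~\ref{lem-j-k_isog} also implies $E$ admits a rational $7$-isogeny, but then Lemma~\ref{lem-piso} (applied with $p=7>5$) forces $7\mid [\Q(E[7]):\Q]$, producing an element of order~$7$ in $\Gal(\Q(E[7])/\Q)$. This contradicts Theorem~\ref{thm-galq3}, since every element of $\Gal(\Q(3^\infty)/\Q)$ has order dividing~$6$.

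For the cyclic case, I would show that $E(\Q(3^\infty))(7)\supseteq \Z/7\Z$ if and only if $E$ admits a rational $7$-isogeny. If such an isogeny exists with kernel~$C$, then by Lemma~\ref{lem-isog_gal} the extension $\Q(C)/\Q$ is cyclic Galois of order dividing~$6$; every such group is of generalized $S_3$-type, so $\Q(C)\subseteq \Q(3^\infty)$ by Theorem~\ref{thm-galq3conv}, and thus $C\subseteq E(\Q(3^\infty))(7)$. Conversely, if $E(\Q(3^\infty))(7)\simeq \Z/7\Z$, then because $\Q(3^\infty)/\Q$ is Galois this subgroup is $\Gal(\Qbar/\Q)$-stable, hence the kernel of a rational $7$-isogeny. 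The displayed rational parameterization of $j(E)$ is the classical Hauptmodul for $X_0(7)$, whose non-cuspidal rational points parametrize exactly the elliptic curves over $\Q$ admitting a rational $7$-isogeny.

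The main obstacle is the bicyclic case: proving that $E[7]\subseteq E(\Q(3^\infty))$ occurs precisely when $j(E)=2268945/128$. The ``if'' direction follows from exhibiting a single curve, namely~\texttt{2450a1} with $j=2268945/128$, having full $7$-torsion in $\Q(3^\infty)$, since by Proposition~\ref{prop-twist} (as $j\ne 0,1728$) the torsion over $\Q(3^\infty)$ depends only on $j(E)$. For the ``only if'' direction, if $E[7]\subseteq E(\Q(3^\infty))$ then by the first step $E$ admits no rational $7$-isogeny, so the mod-$7$ image $H:=\rho_{E,7}(\Gal(\Qbar/\Q))$ is not contained in any Borel subgroup of $\GL_2(\F_7)$. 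Moreover $H$ has surjective determinant, contains $-I$ and a complex-conjugation element, and by Theorem~\ref{thm-galq3} is of generalized $S_3$-type; in particular $|H|$ is coprime to~$7$ and $H$ has exponent dividing~$6$. A finite enumeration (e.g.\ in Magma) of the conjugacy classes of such subgroups of $\GL_2(\F_7)$ restricts $H$ to lie in the normalizer of either a split or a non-split Cartan---the exceptional $\PGL_2$-type subgroups like $A_4,S_4,A_5$ are ruled out by the exponent bound together with Lemma~\ref{lem-s3char}. For each surviving conjugacy class of~$H$ I would produce an explicit model of the modular curve $X_H$ using the techniques of~\cite{RZB, zywina1, sutherland2}, and compute its non-cuspidal rational points; the expected conclusion is that the only $j$-invariant so arising is $2268945/128$. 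The case analysis of subgroups together with the determination of rational points on the resulting modular curves constitutes the crux of the argument and is where the computational infrastructure cited in the introduction is essential.
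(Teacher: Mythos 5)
Your treatment of the bound $E(\Q(3^\infty))(7)\subseteq E[7]$ and of the cyclic case is essentially the paper's: both rest on the Weil pairing, Lemma~\ref{lem-j-k_isog}, Lemma~\ref{lem-isog_gal}, and the observation (Lemma~\ref{lem-piso}) that a rational $7$-isogeny forces $7\mid[\Q(E[7]):\Q]$, which is incompatible with $\Q(E[7])\subseteq\Q(3^\infty)$. (One caution: the intermediate equivalence you announce, ``$E(\Q(3^\infty))(7)\supseteq\Z/7\Z$ if and only if $E$ admits a rational $7$-isogeny,'' is false as stated, since the curve with $j=2268945/128$ has full $7$-torsion over $\Q(3^\infty)$ but no rational $7$-isogeny; the two implications you actually prove are the right ones, and they close the argument once you note, as you do later, that full $7$-torsion over $\Q(3^\infty)$ excludes a rational $7$-isogeny.) Where you genuinely diverge is the bicyclic case. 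You propose to enumerate the generalized $S_3$-type subgroups $H\subseteq\GL_2(\F_7)$ with surjective determinant that are not contained in a Borel (hence lie in a Cartan normalizer) and then determine the non-cuspidal rational points of each modular curve $X_H$. The paper avoids any new modular-curve computation here: since $\Gal(\Q(E[7])/\Q)$ has exponent dividing $6$, each Frobenius image is killed by $x^6-1$, which splits over $\F_7$, so the reduction of $E$ at every prime of good reduction admits an $\F_p$-rational $7$-isogeny; as $E$ admits no global $7$-isogeny, the local-global theorem of \cite{sutherland1} immediately gives $j(E)=2268945/128$. Your route is viable---the surviving $H$ have dihedral projective image and the resulting curves are of low genus---but it amounts to re-deriving the key computation already carried out in \cite{sutherland1}, and that rational-point determination, which you leave as an ``expected conclusion,'' is precisely where the weight of the proof sits; the paper's reduction to the local-global principle buys a complete argument by citation, at the cost of being less self-contained. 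Your converse direction (verify one curve of that $j$-invariant and invoke Proposition~\ref{prop-twist}) is a clean alternative to the paper's appeal to the explicit mod-$7$ images from \cite{sutherland1}.
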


\begin{proof}
Lemma \ref{lem-roots_of_unity} and Proposition \ref{prop-weil} imply that $E[49]\not\subseteq \Q(3^\infty)$,
and Lemma \ref{lem-j-k_isog} then implies that $E(\Q(3^\infty))(7)\simeq \Z/7^k\Z\oplus\Z/7^j\Z$ with $k\le 1$, and $k\le j\le k+1$.

If $j>k$ then Lemma \ref{lem-j-k_isog} implies that $E$ admits a rational 7-isogeny, and Lemma \ref{lem-piso} then implies that $[\Q(E[7]):\Q]$ is divisible by 7.
The exponent of $\Gal(\Q(E[7])/\Q)$ is therefore not divisible by~6, so $\Q(E[7])\not\subseteq\Q(3^\infty)$, therefore $k=0$, $j=1$, and $E(\Q(3^\infty))(7)\simeq \Z/7\Z$.
This also rules out the case $k=1$ and $j=2$, which proves the first statement in the theorem.

If $j=k$ then we claim that $E$ cannot admit a rational 7-isogeny.
Indeed, if $E$ admits a rational 7-isogeny and $P$ is a non-trivial point in its kernel, then Lemma~\ref{lem-isog_gal} implies that $\Gal(\Q(P)/\Q)$ is cyclic of order dividing 6, hence of generalized $S_3$-type, so $\Q(P)\in \Q(3^\infty)$, by Theorem~\ref{thm-galq3conv}.
But then we must have $j=k=1$, so $\Q(E[7])\subseteq \Q(3^\infty)$, but then Lemma \ref{lem-piso} implies that 7 divides $[\Q(E[7]):\Q]$, which contradicts $\Q(E[7])\subseteq \Q(3^\infty)$.
Thus $k=0$,$j=1$ if and only if $E$ admits a rational 7-isogeny, equivalently, $j(E)$ lies in the image of the map from $X_0(7)$ to the $j$-line that appears in the statement of the lemma and can be found in \cite[Table 3]{lozano1}, for example.

If $j=k=1$ then $\Q(E[p])\subseteq \Q(3^\infty)$, so $\Gal(\Q(E[p])/\Q)$ has exponent dividing~6, by Theorem~\ref{thm-galq3}.
This implies that for every prime $p\ne 7$ of good reduction for $E$, the elliptic curve $E_p/\F_p$ obtained by reducing $E$ modulo $p$ has its 7-torsion defined over an $\F_p$-extension of degree dividing~6, and in particular, admits an $\F_p$-rational 7-isogeny (two in fact).  Thus $E/\Q$ admits a rational 7-isogeny locally everywhere but not globally, and as proved in \cite{sutherland1}, this implies $j(E)=2268945/128$.
Conversely, as also proved in \cite{sutherland1}, for every elliptic curve $E/\Q$ with this $j$-invariant the group $\Gal(\Q(E[7])/\Q)$ is isomorphic to a subgroup of $\GL_2(\F_7)$ with surjective determinant map whose image in $\PGL_2(\F_7)$ is isomorphic to $S_3$; up to conjugacy there are exactly two such groups (labeled \texttt{7NS.2.1} and \texttt{7NS.3.1} in \cite{sutherland2}), and both are of generalized $S_3$-type.
Thus every elliptic curve $E/\Q$ with $j(E)=2268945/128$ has $E(\Q(3^\infty))(7)\simeq \Z/7\Z\oplus \Z/7\Z$.

Otherwise, $j=k=0$ and $E(\Q(3^\infty))(7)$ is trivial; the lemma follows.
\end{proof}

\begin{example}\label{ex-7max}
The curve \href{http://www.lmfdb.org/EllipticCurve/Q/2450a1}{\texttt{2450a1}} has $j$-invariant $2268945/128$ and is thus an example of an elliptic curve $E/\Q$ for which $E(\Q(3^\infty))(7)\simeq\Z/7\Z\oplus\Z/7\Z$.
\end{example}

\begin{corollary}\label{cor-14x14}
Let $E/\Q$ be an elliptic curve. Then $E(\Q(3^\infty))_\tor\simeq \Z/14\Z\oplus\Z/14\Z$ if and only if $j(E)=2268945/128$.
\end{corollary}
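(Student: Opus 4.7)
The plan is to split the biconditional into two implications and bring in the pre-existing $p$-primary results. The ``only if'' direction is immediate from Lemma \ref{lem-7tor}: if $E(\Q(3^\infty))_\tor \simeq \Z/14\Z\oplus\Z/14\Z$, then its 7-primary component is $\Z/7\Z\oplus\Z/7\Z$, and Lemma \ref{lem-7tor} identifies this condition with $j(E)=2268945/128$.

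For the ``if'' direction, suppose $j(E)=2268945/128$. Since $j(E)\notin\{0,1728\}$, Proposition \ref{prop-twist} tells us that $E(\Q(3^\infty))_\tor$ depends only on $j(E)$, so it suffices to compute the torsion subgroup for a single rational model, for instance \texttt{2450a1} from Remark \ref{rem-examples}. Lemma \ref{lem-2tor} supplies $E[2]\simeq\Z/2\Z\oplus\Z/2\Z\subseteq E(\Q(3^\infty))$ for free, while Lemma \ref{lem-7tor} supplies the full 7-torsion, so $\Z/14\Z\oplus\Z/14\Z\subseteq E(\Q(3^\infty))_\tor$ without further work.

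The remaining task is to rule out any extra torsion at the primes $2,3,5,13$. For \texttt{2450a1}, the cubic defining its short Weierstrass form is irreducible, so $E(\Q)[2]$ is trivial and Lemma \ref{lem-2primary} restricts the 2-primary part to $E[2]$ or $E[4]$; the latter is excluded by inspecting the mod-$4$ image via the Rouse--Zureick-Brown database \cite{RZB}. For $p\in\{5,13\}$, a direct check on the isogeny class of \texttt{2450a1} (or equivalently on its mod-$p$ image) shows that $E$ admits no rational $p$-isogeny of the shape required by Lemmas \ref{lem-5tor} and \ref{lem-13tor}, so those primary components vanish. The main obstacle is the $3$-primary part, which is not yet bounded at this point in the paper; once the analogous analysis for $p=3$ is carried out later in the section, it will show that the mod-$3$ image of \texttt{2450a1} is large enough to preclude any 3-torsion over $\Q(3^\infty)$, at which point equality $E(\Q(3^\infty))_\tor=\Z/14\Z\oplus\Z/14\Z$ follows.
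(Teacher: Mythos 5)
Your proposal is correct and follows essentially the same route as the paper: the forward direction is read off from Lemma \ref{lem-7tor}, and the converse is reduced via Proposition \ref{prop-twist} to a direct computation of $E(\Q(3^\infty))(p)$ for $p=2,3,5,7,13$ on the single representative \texttt{2450a1} (the paper simply states this as one computation rather than itemizing the primes). Your forward reference to the later $3$-primary analysis is harmless since Lemma \ref{lem-3tor} does not depend on this corollary, though the paper avoids it by checking $p=3$ computationally on the spot.
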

\begin{proof}
The forward implication is an immediate consequence of Lemmas~\ref{lem-2tor} and~\ref{lem-7tor}.
A direct computation of $E(\Q(3^\infty))(p)$ for $p=2,3,5,7,13$ for the elliptic curve  \href{http://www.lmfdb.org/EllipticCurve/Q/2450a1}{\texttt{2450a1}} in Example~\ref{ex-7max} finds that $E(\Q(3^\infty))_\tor = E[14]$ for this particular $E/\Q$ with $j(E)=2268945/128$, hence for every $E/\Q$ with the same $j$-invariant, by Proposition~\ref{prop-twist}.
\end{proof}

\subsection{The 3-primary component of \texorpdfstring{$\boldsymbol{E(\Q(3^\infty))_\tor}$}{\textit{E}(Q(3*))tors}}

\begin{lemma}\label{lem-3tor}
Let $E/\Q$ be an elliptic curve.
Then $E(\Q(3^\infty))[3]=E[3]$ if and only if $E$ admits a rational $3$-isogeny, and $E(\Q(3^\infty))(3)$ is trivial otherwise.
\end{lemma}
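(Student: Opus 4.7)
I would prove the lemma by analyzing the image $G:=\rho_{E,3}(\Gal(\Qbar/\Q))\subseteq\GL_2(\F_3)$ in light of Theorems~\ref{thm-galq3} and~\ref{thm-galq3conv}. First, for the implication that a rational $3$-isogeny forces $E[3]\subseteq E(\Q(3^\infty))$, the hypothesis puts $G$ inside the Borel subgroup $B\subset\GL_2(\F_3)$ of order~$12$. A direct computation identifies $B\cong S_3\times\Z/2\Z$: writing $B=T\ltimes U$ with $T$ the diagonal torus and $U$ the unipotent radical, the centre $\{\pm I\}$ is central in $B$ and is complemented by $\bigl\langle\smallmat{1}{1}{0}{1},\smallmat{1}{0}{0}{-1}\bigr\rangle\cong S_3$. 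Since $S_3\times\Z/2\Z$ embeds in $S_3\times S_3$, the group $B$ is of generalized $S_3$-type, and Theorem~\ref{thm-galq3conv} gives $\Q(E[3])\subseteq\Q(3^\infty)$.

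For the converse implication and the ``otherwise'' clause, it suffices to show that if $E(\Q(3^\infty))(3)$ is nontrivial then $E$ admits a rational $3$-isogeny. Pick $P\in E(\Q(3^\infty))$ of order~$3$. If $E(\Q(3^\infty))[3]$ is properly contained in $E[3]$, then it equals $\langle P\rangle$, and since $\Q(3^\infty)/\Q$ is Galois, $\langle P\rangle$ is stable under $\Gal(\Qbar/\Q)$, giving a rational $3$-isogeny $E\to E/\langle P\rangle$. Otherwise $E[3]\subseteq E(\Q(3^\infty))$, so by Theorem~\ref{thm-galq3}, $G=\Gal(\Q(E[3])/\Q)$ is of generalized $S_3$-type; the remaining task is to show that this already forces $G$ into a Borel subgroup of $\GL_2(\F_3)$.

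For that, Lemma~\ref{lem-s3char} endows $G$ with exponent dividing $6$ and abelian Sylow subgroups. Because $x^3-1=(x-1)^3$ in $\F_3[x]$, every order-$3$ element of $\GL_2(\F_3)$ is unipotent, so the $3$-Sylow $G_3$ has order in $\{1,3\}$; commuting involutions of $\GL_2(\F_3)$ are simultaneously diagonalizable, so the elementary abelian $2$-Sylow $G_2$ has order at most $4$. Hence $|G|$ divides $12$. If $G_3$ is trivial or normal in $G$, then $G$ preserves a common line in $E[3]$ (an eigenline of the involutions in $G_2$ or the unique fixed line of $G_3$, respectively), so $G$ lies in a Borel. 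The only alternative is $n_3=4$ and $|G|=12$, in which case the conjugation action on the four $3$-Sylows embeds $G$ into $S_4$ and identifies it with $A_4$; but $A_4$ is not supersolvable, contradicting generalized $S_3$-type. I expect the main obstacle is precisely this last case analysis---ruling out $G\cong A_4$ and handling the $G_3=1$ case uniformly---while the rest of the argument is routine given the $\Q(3^\infty)$ framework already developed.
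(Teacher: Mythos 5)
Your proof is correct, and it follows the paper's overall framework --- everything reduces, via Theorems~\ref{thm-galq3} and \ref{thm-galq3conv}, to the statement that a subgroup of $\GL_2(\F_3)$ is of generalized $S_3$-type if and only if it is conjugate into the Borel subgroup --- but you supply the two key ingredients differently. Where the paper disposes of the group theory by citing a Magma enumeration of the subgroups of $\GL_2(\Z/3\Z)$, you prove both directions by hand: the identification $B\cong S_3\times\Z/2\Z$ checks out (your generators do generate an $S_3$ complementing the central $\{\pm I\}$), and your case analysis for the converse is complete --- order-$3$ elements are unipotent and commuting involutions are simultaneously diagonalizable, so $|G|$ divides $12$; a trivial or normal $3$-Sylow forces a common invariant line; and since $n_3\in\{1,4\}$ is the only possibility, the sole escape is a group of order $12$ with four $3$-Sylows, which is $A_4$ and is not supersolvable, hence not of generalized $S_3$-type. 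For the ``otherwise'' clause the paper invokes Lemma~\ref{lem-j-k_isog}, while you argue directly that a nontrivial $E(\Q(3^\infty))[3]$ is either a proper Galois-stable line (giving the isogeny at once, because $\Q(3^\infty)/\Q$ is Galois) or all of $E[3]$, which feeds back into the group-theoretic dichotomy; combined with the forward implication this does yield both the ``if and only if'' and the triviality statement. Both routes are valid; yours trades the computer verification for a short stretch of elementary but fully human-checkable group theory.
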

\begin{proof}
An enumeration of the subgroups $G$ of $\GL_2(\Z/3\Z)$ finds that $G$ is of generalized $S_3$-type if and only if it is conjugate to a subgroup of the Borel group; this implies the first part of the lemma, since $E(\Q(3^\infty))[3]=E[3]$ if and only if $\Gal(\Q(E[3])/\Q)\simeq \im \rho_{E,3}\subseteq \GL_2(\Z/3\Z)$ is of generalized $S_3$-type.
If $\Q(E[3])\not\subseteq \Q(3^\infty)$, then Lemma~\ref{lem-j-k_isog} implies that if $E(\Q(3^\infty))(3)$ is non-trivial then $E$ admits a rational 3-isogeny, but this cannot occur, so $E(\Q(3^\infty))(3)$ is trivial.
\end{proof}

\begin{lemma}\label{lem-9x27}
Let $E/\Q$ be an elliptic curve. Then $E(\Q(3^\infty))$ does not contain a subgroup isomorphic to $\Z/9\Z\oplus  \Z/27\Z.$
\end{lemma}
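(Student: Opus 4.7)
The plan is to argue by contradiction. Assume $E(\Q(3^\infty))_\tor$ contains a subgroup isomorphic to $\Z/9\Z\oplus\Z/27\Z$, with generators $P$ of order $9$ and $Q$ of order $27$ satisfying $\langle P\rangle\cap\langle Q\rangle=0$. Since $P$ and $3Q$ together generate $E[9]$, we have $E[9]\subseteq E(\Q(3^\infty))$; by Theorem~\ref{thm-galq3}, the image $\rho_{E,9}(\Gal(\Qbar/\Q))\subseteq\GL_2(\Z/9\Z)$ is of generalized $S_3$-type, so it has exponent dividing $6$, abelian Sylow subgroups, surjective determinant onto $(\Z/9\Z)^\times$, and contains $-I$. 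Applying Lemma~\ref{lem-j-k_isog} with $p=3$, $k=2$, $j=3$, the subgroup $\langle 9Q\rangle$ is Galois-stable over $\Q$, so $E$ admits a rational $3$-isogeny and the mod-$9$ image is additionally contained in the preimage of a Borel subgroup under reduction mod $3$.

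Next, I consider the mod-$27$ image $H:=\rho_{E,27}(\Gal(\Qbar/\Q))\subseteq\GL_2(\Z/27\Z)$. Its reduction modulo $9$ must be one of the (few) subgroups described above, while the kernel of reduction is an elementary abelian subgroup of $I + 9M_2(\F_3)$ of order dividing $81$. The hypothesis $Q\in E(\Q(3^\infty))$ means $\Q(Q)\subseteq\Q(3^\infty)$, so the Galois closure of $\Q(Q)/\Q$ inside $\Q(E[27])$ is a subfield of $\Q(3^\infty)$; consequently, the quotient of $H$ by the core of the stabilizer $\mathrm{Stab}_H(v_Q)$ in $H$, where $v_Q\in(\Z/27\Z)^2$ is the coordinate vector of $Q$ in some basis of $E[27]$, must itself be of generalized $S_3$-type.

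To complete the argument, I would enumerate, using a computer algebra system such as Magma, all subgroups $H\subseteq\GL_2(\Z/27\Z)$ satisfying these combined constraints, and for each verify that no vector $v\in(\Z/27\Z)^2$ of additive order $27$ can play the role of $v_Q$. The scripts accompanying the paper~\cite{magmascripts} perform precisely this sort of enumeration for related problems, and one expects the verification to succeed, yielding the required contradiction.

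The main obstacle is this enumeration. Although the constraints---supersolvability with exponent dividing $6$, abelian Sylows, containing $-I$, surjective determinant, Borel mod~$3$, generalized $S_3$-type reduction mod~$9$, and a vector of order $27$ whose orbit under $H$ has the requisite generalized $S_3$-type permutation image---considerably narrow the search space inside $\GL_2(\Z/27\Z)$, the classification must be carried out carefully to cover all conjugacy classes and all candidate vectors. A more slick alternative might exploit the dual $3$-isogeny $E\to E/\langle 9Q\rangle$ to produce a similar torsion configuration on the isogenous curve and iterate, but executing that reduction cleanly also appears to require the same kind of mod-$27$ case analysis.
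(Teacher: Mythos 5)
Your setup is fine and matches the paper's: the constraints you impose on $G=\im\rho_{E,27}$ (surjective determinant, an element corresponding to complex conjugation, and a normal subgroup $N$ acting trivially on a submodule isomorphic to $\Z/9\Z\oplus\Z/27\Z$ with $G/N$ of generalized $S_3$-type --- your two separate conditions on the mod-$9$ image and on the core of $\mathrm{Stab}_H(v_Q)$ combine to give exactly this) are the right ones to feed into a Magma enumeration. But there is a genuine gap at the decisive step: you expect the enumeration to return the empty list, ``yielding the required contradiction,'' and it does not. The paper's computation shows that subgroups of $\GL_2(\Z/27\Z)$ satisfying all of these group-theoretic constraints \emph{do} exist; they are all conjugate into the full preimage of an explicit subgroup $H\subseteq\GL_2(\Z/9\Z)$ whose associated congruence subgroup is $\textrm{9H}^1$ of genus $1$. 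No amount of refining the group theory (exponent $6$, abelian Sylows, Borel mod $3$, orbit conditions on a vector of order $27$) will eliminate these groups, because they are perfectly consistent abstract Galois images; what fails is the existence of an elliptic curve over $\Q$ realizing them.

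The missing idea is therefore arithmetic, not group-theoretic: one must pass to the modular curve $X_H$, construct an explicit model (the paper builds it as a fiber product over $X_0(3)$ of two genus-zero curves from \cite{SZ}, obtaining a curve isomorphic to \texttt{27a3}), and show that $X_H(\Q)$ consists of exactly its $3$ rational cusps --- here via the fact that \texttt{27a3} has rank $0$ and exactly $3$ rational points. Only then does one conclude that no $E/\Q$ has mod-$27$ image inside the preimage of $H$. Your proposed alternative via the dual $3$-isogeny runs into the same wall: the isogenous curve inherits an equally consistent mod-$27$ image, so iteration cannot substitute for the rational-point computation on a positive-genus modular curve.
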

\begin{proof}
Suppose for the sake of contradiction that there is an elliptic curve $E/\Q$ for which $E(\Q(3^\infty))$ contains a subgroup isomorphic to $\Z/9\Z\oplus\Z/27\Z$.
Then the image $G:=\im\rho_{E,27}\subseteq \GL_2(\Z/27\Z)$ of the mod-27 Galois representation attached to $E$ satisfies the following properties:
\begin{enumerate}[(i)]
\item $G$ has a surjective determinant map and an element with trace 0 and determinant $-1$;
\item $G$ contains a normal subgroup $N$ that acts trivially on a $\Z/27\Z$-submodule of $\Z/27\oplus\Z/27$ isomorphic to $\Z/9\Z\oplus\Z/27\Z$ for which $G/N$ is of generalized $S_3$-type.
\end{enumerate}
As noted in \S\ref{sec-notation}, the first condition is required by $\rho_{E,n}$ for any elliptic curve $E/\Q$.
The second requirement reflects the fact that $\Q(E[27])$ contains the Galois extension $\Q(E(\Q(3^\infty))[27])/\Q$ whose Galois group is a quotient $G/N$ of $G$ and for which the Galois group $\Gal(\Q(E[27]/\Q(E(\Q(3^\infty))[27])\simeq N$ acts trivially on a subgroup of $E[27]$ isomorphic to $\Z/9\Z\oplus\Z/27\Z$.

An enumeration in Magma of the subgroups of $\GL_2(\Z/27\Z)$ finds that every such $G$ is conjugate to a subgroup of the full inverse image of
\[
H:=\left\langle
\begin{pmatrix}1&3\\0&1\end{pmatrix},
\begin{pmatrix}1&0\\0&2\end{pmatrix},
\begin{pmatrix}8&0\\0&1\end{pmatrix}
\right\rangle\subseteq\GL_2(\Z/9\Z)
\]
in $\GL_2(\Z/27\Z)$.
Taking the intersection of $H$ with $\SL_2(\Z/9\Z)$ shows that $H$ corresponds to the congruence subgroup labeled \href{http://www.uncg.edu/mat/faculty/pauli/congruence/csg1.html#group9H1}{$\textrm{9H}^1$} in the tables of Cummins and Pauli \cite{cp}.
The modular curve $X_H$ of level 9 and genus 1 is defined over $\Q$ and has 3 rational cusps (the number of rational cusps can be determined via \cite[Lemma 3.4]{zywina2}, for example).
The group $H$ is equal to the intersection $H_1\cap H_2$ of two subgroups of $\GL_2(\Z/9\Z)$ whose intersection with $\SL_2(\Z/9\Z)$ gives the congruence subgroups \href{http://www.uncg.edu/mat/faculty/pauli/congruence/csg0.html#group9I0}{$\textrm{9I}^0$} and \href{http://www.uncg.edu/mat/faculty/pauli/congruence/csg0.html#group9J0}{$\textrm{9J}^0$}.
Explicit rational parameterizations for the genus zero modular curves $X_{H_1}$ and $X_{H_2}$ appear in \cite{SZ}; these curves both admit rational maps to $X_0(3)$, allowing us to explicitly construct a rational model for $X_H$ as the fiber product of these maps over $X_0(3)$.  This model is isomorphic to the elliptic curve \href{http://www.lmfdb.org/EllipticCurve/Q/27a3}{\texttt{27a3}}, which has just 3 rational points, which is equal to the number of rational cusps on $X_H$, so there are no non-cuspidal rational points.
It follows that for every elliptic curve $E/\Q$, the image of $\rho_{E,27}$ is not conjugate to a subgroup of $H$, which is our desired contradiction.
\end{proof}

\begin{proposition}\label{prop-no27}
If $E/\Q$ is an elliptic curve, then $E(\Q(3^\infty))$ does not have a point or order $27$.
\end{proposition}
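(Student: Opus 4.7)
The plan is to derive a contradiction from the assumption that an elliptic curve $E/\Q$ has a point $Q \in E(\Q(3^\infty))$ of order $27$.

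First, I would pin down the $3$-primary torsion structure of $E(\Q(3^\infty))$. Let $k$ be the largest integer with $E[3^k]\subseteq E(\Q(3^\infty))$. By Lemma~\ref{lem-9x27} we must have $k\le 1$, for otherwise $E(\Q(3^\infty))$ would contain a subgroup isomorphic to $\Z/9\Z\oplus\Z/27\Z$. If $k=0$, then applying Lemma~\ref{lem-j-k_isog} to $Q$ supplies a rational $27$-isogeny on $E$; the (Galois-stable) subgroup of order~$3$ inside its cyclic kernel is then the kernel of a rational $3$-isogeny, so Lemma~\ref{lem-3tor} forces $E[3]\subseteq E(\Q(3^\infty))$, contradicting $k=0$. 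Hence $k=1$, so $E[3]\subseteq E(\Q(3^\infty))$ and $E$ admits a rational $9$-isogeny with kernel $\langle 3Q\rangle$.

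Next, I would translate these constraints into conditions on the image $G:=\im\rho_{E,27}\subseteq\GL_2(\Z/27\Z)$. Fixing a $\Z/27\Z$-basis $\{P_1,P_2\}$ of $E[27]$ with $P_2=Q$, the group $G$ must satisfy: (i) the standard conditions of surjective determinant and an element of trace $0$ and determinant $-1$; (ii) the upper-right entry of every matrix lies in $9\Z/27\Z$, reflecting the $G$-stability of $\langle 3P_2\rangle$; and (iii) $G$ has a normal subgroup $N$ (identified with $\Gal(\Q(E[27])/\widehat{L})$, where $\widehat{L}$ is the Galois closure of $L:=\Q(E[3],Q)\subseteq\Q(3^\infty)$) that acts trivially on the submodule $\langle 9P_1,P_2\rangle\simeq\Z/3\Z\oplus\Z/27\Z$ and for which the quotient $G/N$ is of generalized $S_3$-type.

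Mirroring the strategy of Lemma~\ref{lem-9x27}, I would then enumerate in Magma the subgroups of $\GL_2(\Z/27\Z)$ satisfying (i)-(iii), identifying, up to conjugacy, a small list of maximal such subgroups $H$. For each $H$, the plan is to construct an explicit rational model of $X_H$ --- most likely realizing $X_H$ as a fiber product over a lower-level modular curve (such as $X_0(3)$ or $X_0(9)$) of modular curves whose rational parameterizations are known (e.g., from \cite{SZ}) --- and then to enumerate the rational points of this model and verify that they are all cusps.

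The main obstacle is this last step: constructing $X_H$ explicitly and determining $X_H(\Q)$ for each maximal~$H$ in the enumeration. By analogy with Lemma~\ref{lem-9x27} (whose modular curve turned out to be the rank-$0$ elliptic curve \texttt{27a3} with only cuspidal rational points), I expect every $X_H$ arising here to be a low-genus curve of Mordell--Weil rank $0$ whose rational points can be enumerated outright and shown to consist only of cusps, producing the required contradiction.
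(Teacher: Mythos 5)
Your reduction to $E(\Q(3^\infty))(3)\simeq\Z/3\Z\oplus\Z/27\Z$ is correct (and slightly more detailed than the paper's, which simply cites Lemmas~\ref{lem-3tor} and~\ref{lem-9x27}), and your group-theoretic setup for $G=\im\rho_{E,27}$ matches the paper's almost exactly; the overall strategy is the same one the paper follows. The gap is in the step you yourself flag as the main obstacle: your stated expectation for how it resolves is wrong in two concrete ways. The enumeration produces three maximal groups $H_1,H_2,H_3$, with congruence subgroups $27\mathrm{C}^1$, $27\mathrm{B}^4$, $27\mathrm{A}^4$ of genus $1$, $4$, $4$. For $H_1$, which lies in the Borel subgroup, the modular curve is \emph{not} a curve whose rational points are all cusps: it has a non-cuspidal rational point corresponding to the unique $\Qbar$-isomorphism class admitting a rational $27$-isogeny (isogeny class \texttt{27a}). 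Your plan of ``verify that they are all cusps'' fails here, and one must instead directly compute that none of the four curves in that isogeny class acquires a point of order $27$ over $\Q(3^\infty)$. For $H_3$ the modular curve has genus $4$ and its rational points cannot be ``enumerated outright'': the paper passes to two genus-$2$ quotients defined only over $\Q(\zeta_3)$, shows their Jacobians have rank $0$ but nontrivial ($\Z/3\Z$) torsion, determines their rational points, and pulls back to the genus-$4$ model. Only the $H_2$ case behaves as you predict (it embeds in a group whose genus-$2$ modular curve has rank-$0$ Jacobian, is handled by Chabauty, and has only cuspidal rational points). So the skeleton of your argument is right, but the substance of the proposition lives in the part you deferred, and the deferral rests on a prediction that is false for two of the three cases.
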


\begin{proof}
Suppose for the sake of obtaining a contradiction that $E/\Q$ is an elliptic curve with a point of order 27 defined over $\Q(3^\infty)$.
Lemmas \ref{lem-3tor} and \ref{lem-9x27} imply $E(\Q(3^\infty))(3)\simeq \Z/3\Z\times\Z/27\Z$.
We now proceed as in the proof of Lemma~\ref{lem-9x27}, and consider the subgroups $G$ of $\GL_2(\Z/27\Z)$ that may arise as the image of the mod-27 Galois image $\im\rho_{E,27}$, except in (ii) we now only require the normal subgroup $N$ of $G$ for which $G/N$ is of generalized $S_3$-type to act trivially on a submodule isomorphic to $\Z/3\Z\oplus\Z/27\Z$.
We find that every such $G$ is conjugate to a subgroup of one of three subgroups $H_1,H_2,H_3\subseteq \GL_2(\Z/27\Z)$ whose intersection with $\SL_2(\Z/27\Z)$ yields the congruence subgroups with Cummins-Pauli labels \href{http://www.uncg.edu/mat/faculty/pauli/congruence/csg1.html#group27C1}{$\textrm{27C}^1$}, \href{http://www.uncg.edu/mat/faculty/pauli/congruence/csg4.html#group27B4}{$\textrm{27B}^4$}, \href{http://www.uncg.edu/mat/faculty/pauli/congruence/csg4.html#group27A4}{$\textrm{27A}^4$}, respectively.
We now show that no elliptic curve $E/\Q$ can have $\im\rho_{E,27}$ conjugate to a subgroup of any of the groups $H_1,H_2,H_3$, which is our desired contradiction.

The group $H_1$ lies in the Borel subgroup of upper triangular matrices in $\GL_2(\Z/27\Z)$, so if $\im\rho_{E,27}$ is conjugate to a subgroup of $H_1$ then $E$ admits a rational 27-isogeny.
From \cite[Table~4]{lozano1} we see that there is just one $\Qbar$-isomorphism class of elliptic curves that admit a rational 27-isogeny, represented by the curve \href{http://www.lmfdb.org/EllipticCurve/Q/27a2}{\texttt{27a2}}.
None of the four curves in its isogeny class \href{http://www.lmfdb.org/EllipticCurve/Q/27a}{\texttt{27a}} have $j$-invariant 1728, so by Proposition~\ref{prop-twist}, it is enough to check whether $E(\Q(3^\infty))$ contains a point of order~27 for each of the four curves $E/\Q$ in isogeny class \href{http://www.lmfdb.org/EllipticCurve/Q/27a}{\texttt{27a}}; a direct computation finds that none do.

The group $H_2$ is conjugate to a subgroup of
\[
H_4:=\left\langle
\begin{pmatrix}1&1\\9&1\end{pmatrix},
\begin{pmatrix}1&0\\0&2\end{pmatrix}
\begin{pmatrix}2&0\\0&1\end{pmatrix}
\right\rangle\subseteq\GL_2(\Z/27\Z),
\]
whose intersection with $\SL(2,\Z/27\Z)$ is conjugate to \href{http://www.uncg.edu/mat/faculty/pauli/congruence/csg2.html#group27A2}{$\textrm{27A}^2$}.
Using the methods of \cite{RZB}, Rouse and Zureick-Brown have computed a model for the corresponding modular curve $X_{H_4}$ of genus 2, which has two rational cusps:
\[
X_{H_4}: y^2 = x^6 - 18x^3 - 27.
\]
A 2-descent on the Jacobian of this curve shows that it has rank zero, so the rational points on $X_{H_4}$ can be easily determined via Chabauty's method (using the \texttt{Chabauty0} function in Magma, for example).
The only points in $X_{H_4}(\Q)$ are the 2 points at infinity, both of which must be cusps.
This rules out the possibility that $\im\rho_{E,27}$ is conjugate to a subgroup of $H_2\subseteq H_4$.

This leaves only the group
\[
H_3:=\left\langle
\begin{pmatrix}1&2\\9&1\end{pmatrix},
\begin{pmatrix}1&0\\0&2\end{pmatrix},
\begin{pmatrix}8&0\\0&1\end{pmatrix}
\right\rangle \subseteq \GL_2(\Z/27\Z).
\]
Using the results of \cite{SZ}, a singular model for the modular curve $X_{H_3}$ can be explicitly constructed as the fiber product over $X_0(9)$ of two genus zero curves with maps $t^3$ and $ (t^3-6t^2+3t+1)/(t^2-t)$ to $X_0(9)$ (the corresponding congruence subgroups are \href{http://www.uncg.edu/mat/faculty/pauli/congruence/csg0.html#group27A0}{$\textrm{27A}^0$} and \href{http://www.uncg.edu/mat/faculty/pauli/congruence/csg0.html#group9I0}{$\textrm{9I}^0$}, respectively).
This yields the genus 4 curve
\[
X_{H_3}\colon x^3y^2 - x^3y - y^3 + 6y^2 - 3y = 1.
\]
which has two rational points at infinity (both singular).

Over $\Q(\zeta_3)$ the automorphism group of $X_{H_3}$ is isomorphic to $\Z/3\Z\oplus\Z/3\Z$, and with a suitable choice of basis for $\Aut(X_{H_3})$ the two cyclic factors yield two distinct genus 2 quotients, corresponding to the curve
\[
C\colon y^2 = x^6-18\zeta_3x^3-27\zeta_3^2
\]
and its complex conjugate $\overline{C}$. The curve $C$ is isomorphic to $X_{H_4}$ over $\Q(\zeta_9)$, consistent with the fact that the restriction of $H_3$ to elements with determinant $1\bmod 9$ is a subgroup of $H_4$.
A calculation by Jackson Morrow (see \cite{magmascripts} for details) shows that the Jacobian of $C$ has rank 0 and torsion subgroup of order 3 generated by the difference of the two points at infinity on $C$ (and similarly for $\overline{C}$).
It follows that the only rational points on $C$ and $\overline{C}$ are the points at infinity; pulling back these points to our model for $X_{H_3}$ yields only the two rational points at infinity, both of which correspond to cusps on $X_{H_3}$; this rules out the possibility that $\im\rho_{E,27}$ is conjugate to a subgroup of $H_3$.
\end{proof}

Having ruled out points of order 27 in $E(\Q(3^\infty))_\tor$, we now give a necessary and sufficient criterion for $E(\Q(3^\infty))(3)$ to be maximal.

\begin{lemma}\label{lem-9x9}
Let $E/\Q$ be an elliptic curve.
Then $E(\Q(3^\infty))(3)=E[9]\simeq \Z/9\Z\oplus\Z/9\Z$ if and only if one of the following holds:
\begin{itemize}
\item[(i)] The image of $\rho_{E,3}$ is conjugate to a subgroup of the split Cartan subgroup of $\GL_2(\Z/3\Z)$; equivalently, $E$ admits two distinct rational $3$-isogenies.  This case occurs if and only if
\[
j(E) = \frac{27t^3(8-t^3)^3}{(t^3+1)^3},
\]
for some $t\in \Q$, $t \ne -1$.
\item[(ii)] The image of $\rho_{E,9}$ is conjugate in $\GL_2(\Z/9\Z)$ to a subgroup of
\[
H:=\left\langle
\begin{pmatrix}1&2\\3&1\end{pmatrix},
\begin{pmatrix}1&3\\0&1\end{pmatrix},
\begin{pmatrix}1&0\\0&8\end{pmatrix},
\begin{pmatrix}2&0\\0&2\end{pmatrix}
\right\rangle.
\]
This case occurs if and only if
\[
j(E) = \frac{432t(t^2-9)(t^2+3)^3(t^3-9t+12)^3(t^3+9t^2+27t+3)^3(5t^3-9t^2-9t-3)^3}{(t^3-3t^2-9t+3)^9(t^3+3t^2-9t-3)^3}
\]
for some $t\in\Q$.
\end{itemize}
\end{lemma}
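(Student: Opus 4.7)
The plan is to reduce the question to a group-theoretic enumeration at level $9$ and then to a rational-points computation on associated modular curves, in the same style as the proofs of Lemma \ref{lem-9x27} and Proposition \ref{prop-no27}. By Proposition \ref{prop-no27} and Lemma \ref{lem-9x27}, having $E(\Q(3^\infty))(3) = E[9]$ is equivalent to $E[9] \subseteq E(\Q(3^\infty))$, and since $\Q(3^\infty)/\Q$ is Galois this is in turn equivalent to $\Q(E[9]) \subseteq \Q(3^\infty)$. By Theorems \ref{thm-galq3} and \ref{thm-galq3conv}, the latter holds if and only if $\Gal(\Q(E[9])/\Q) \simeq \im \rho_{E,9}$, viewed as a subgroup of $\GL_2(\Z/9\Z)$, is of generalized $S_3$-type in the sense of Definition \ref{def-gens3}.

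First I would enumerate in Magma, up to conjugacy, every subgroup $G$ of $\GL_2(\Z/9\Z)$ that (a) has surjective determinant, (b) contains an element of trace $0$ and determinant $-1$, and (c) is of generalized $S_3$-type (checked using the three criteria of Lemma \ref{lem-s3char}). The expectation, guided by the shape of the statement, is that there are exactly two maximal such subgroups up to conjugacy: the full preimage in $\GL_2(\Z/9\Z)$ of the split Cartan subgroup of $\GL_2(\Z/3\Z)$, corresponding to case (i) and equivalent by Lemma \ref{lem-piso} to the existence of two distinct rational $3$-isogenies; and the explicit group $H$ in case (ii), whose reduction mod $3$ sits inside a Borel but not a split Cartan. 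Every generalized $S_3$-type subgroup with properties (a) and (b) is then conjugate into one of these two.

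Next, for each maximal subgroup I would identify the corresponding modular curve and its $j$-map. Case (i) corresponds to $X_{\textrm{sp}}(3)$, a rational curve whose $j$-map is tabulated in \cite[Table 3]{lozano1} and gives
\[
j(E) = \frac{27 t^3 (8 - t^3)^3}{(t^3 + 1)^3}.
\]
For case (ii), I would express $H$ as an intersection of subgroups of $\GL_2(\Z/9\Z)$ whose associated modular curves admit known rational models (for example certain covers of $X_0(9)$ and the genus-zero curves from \cite{SZ}), construct $X_H$ as a fiber product over an intermediate curve exactly as in the proof of Lemma \ref{lem-9x27}, verify that the resulting curve is rational, and then pull back the $j$-function to a chosen uniformizer and simplify to produce the stated rational parameterization.

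The principal obstacle is case (ii): building an explicit rational model of $X_H$ and extracting the closed-form $j$-map in the statement requires careful fiber-product and birational-simplification calculations, and one must then confirm that the displayed formula indeed parameterizes every $E/\Q$ whose mod-$9$ image lies in $H$. A secondary check, carried out by the Magma enumeration, is that $H$ and the case (i) group really are the only maximal generalized $S_3$-type subgroups satisfying (a) and (b), and that $H$ is not conjugate into the case (i) group, so that the two cases together are both exhaustive and genuinely distinct.
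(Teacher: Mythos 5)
Your proposal follows essentially the same route as the paper: reduce to the condition that $\im\rho_{E,9}$ be of generalized $S_3$-type (using Proposition~\ref{prop-no27} and Lemma~\ref{lem-9x27} to identify $E(\Q(3^\infty))(3)=E[9]$ with $\Q(E[9])\subseteq\Q(3^\infty)$), enumerate such subgroups of $\GL_2(\Z/9\Z)$ in Magma to find that every one either reduces mod~$3$ into the split Cartan group or is conjugate into $H$, and read off the $j$-maps of the two genus-zero modular curves (congruence subgroups $3\textrm{D}^0$ and $9\textrm{J}^0$). The only cosmetic difference is that the paper takes both $j$-maps directly from the tables of \cite{SZ} rather than reconstructing $X_H$ by a fiber-product computation.
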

\begin{proof}
It is easy to verify that both $H$ and the full inverse image of the split Cartan subgroup $C$ of $\GL_2(\Z/3\Z)$ in $\GL_2(\Z/9\Z)$ are of generalized $S_3$-type; it follows that if the image of $\rho_{E,3}$ lies in $C$ or if the image of $\rho_{E,9}$ lies in $H$, then $\rho_{E,9}$ gives an isomorphism from $\Gal(\Q(E[9])/\Q)$ to a group of generalized $S_3$-type and therefore $\Q(E[9])\subseteq \Q(3^\infty)$, so $E(\Q(3^\infty))[9]=E[9]$.

An enumeration of the subgroups $G\subseteq\GL_2(\Z/9\Z)$ of generalized $S_3$-type shows that either the image of $G$ in $\GL_2(\Z/3\Z)$ is conjugate to a subgroup of $C$, or $G$ is conjugate to a subgroup of $H$.
The groups $C$ and $H$ correspond to the congruence subgroups \href{http://www.uncg.edu/mat/faculty/pauli/congruence/csg0.html#group3D0}{$\textrm{3D}^0$} and \href{http://www.uncg.edu/mat/faculty/pauli/congruence/csg0.html#group9J0}{$\textrm{9J}^0$}, both of genus $0$; the rational maps from $X_C$ and $X_H$ to the $j$-line are taken from \cite{SZ}. 
\end{proof}

\begin{example}\label{ex-3max}
The elliptic curve $E/\Q$ with Cremona label $\href{http://www.lmfdb.org/EllipticCurve/Q/27a3}{\texttt{27a3}}$ admits two rational 3-isogenies, hence $E(\Q(3^\infty))(3)\simeq\Z/9\Z\oplus\Z/9\Z$.
On the other hand, the curve \href{http://www.lmfdb.org/EllipticCurve/Q/17100g2}{\texttt{17100g2}} admits only one rational 3-isogeny but also has $E(\Q(3^\infty))(3)\simeq \Z/9\Z\oplus\Z/9\Z$.
\end{example}


\begin{lemma}\label{lem-3x9}
Let $E/\Q$ be an elliptic curve.
Then $E(\Q(3^\infty))(3)\simeq \Z/3\Z\oplus\Z/9\Z$ if and only if the image of $\rho_{E,9}$ in $\GL_2(\Z/9\Z)$ is not of generalized $S_3$-type and is conjugate in $\GL_2(\Z/9\Z)$ to a subgroup of one of the following two groups:
\[
H_1:=\left\langle
\begin{pmatrix}1&1\\0&1\end{pmatrix},
\begin{pmatrix}2&0\\0&1\end{pmatrix},
\begin{pmatrix}2&0\\0&2\end{pmatrix}
\right\rangle,\qquad
H_2:=\left\langle
\begin{pmatrix}1&2\\3&1\end{pmatrix},
\begin{pmatrix}2&0\\0&1\end{pmatrix},
\begin{pmatrix}2&0\\0&2\end{pmatrix}
\right\rangle.
\]
Equivalently, $j(E)$ lies in the image of one of the rational maps
\[
j_1(t)=\frac{(t+3)^3(t^3+9t^2+27t+3)^3}{t(t^2+9t+27)},\qquad j_2(t)=\frac{(t+3)(t^2-3t+9)(t^3+3)^3}{t^3}.
\]
\end{lemma}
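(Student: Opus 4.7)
The plan is to mirror the strategy used to prove Lemma~\ref{lem-9x9}, reducing the question to a finite enumeration of subgroups of $\GL_2(\Z/9\Z)$ followed by an identification of the corresponding modular curves. First, combining Lemma~\ref{lem-3tor}, Lemma~\ref{lem-9x27}, and Proposition~\ref{prop-no27}, if $E(\Q(3^\infty))(3)\simeq \Z/3\Z\oplus\Z/9\Z$ then $E$ must admit a rational $3$-isogeny and $\Q(E(\Q(3^\infty))[9])\subseteq \Q(3^\infty)$, so the image $G:=\im\rho_{E,9}\subseteq \GL_2(\Z/9\Z)$ contains a normal subgroup $N$ acting trivially on a $\Z/9\Z$-submodule of $E[9]$ isomorphic to $\Z/3\Z\oplus\Z/9\Z$ with $G/N$ of generalized $S_3$-type. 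Moreover, since $E(\Q(3^\infty))(3)$ is strictly smaller than $E[9]$, the group $G$ itself cannot be of generalized $S_3$-type, or Lemma~\ref{lem-9x9} would force $E(\Q(3^\infty))(3)=E[9]$. As usual, $G$ must also have surjective determinant and contain an element of trace~$0$ and determinant $-1$.

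Second, one enumerates in Magma the subgroups of $\GL_2(\Z/9\Z)$ satisfying these constraints and verifies, up to conjugacy, that every such $G$ is contained in either $H_1$ or $H_2$. Conversely, direct inspection shows that both $H_1$ and $H_2$ meet all the necessary conditions yet are not themselves of generalized $S_3$-type, so a \emph{containment} $G\subseteq H_i$ (for $i=1$ or $2$) in $\GL_2(\Z/9\Z)$ precisely characterizes the mod-$9$ images for which $E(\Q(3^\infty))(3)\simeq\Z/3\Z\oplus\Z/9\Z$. The choice of the distinguished $\Z/3\Z\oplus\Z/9\Z$-submodule fixed by the appropriate $N\lhd H_i$ identifies the actual subgroup of $E(\Q(3^\infty))$ realized.

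Third, one identifies the modular curves $X_{H_1}$ and $X_{H_2}$. Intersecting $H_1$ and $H_2$ with $\SL_2(\Z/9\Z)$ yields congruence subgroups of $\PSL_2(\Z)$ whose Cummins--Pauli labels can be read off and which are of genus~$0$, so $X_{H_1}$ and $X_{H_2}$ admit rational parameterizations of their $j$-maps. Using the tables in \cite{SZ}, or by a direct fiber-product construction over $X_0(9)$ or $X_0(3)$ as in the proofs of Lemmas~\ref{lem-9x27} and~\ref{lem-9x9}, one extracts the maps $j_1(t)$ and $j_2(t)$ displayed in the statement. Combined with Proposition~\ref{prop-twist} this gives the ``equivalently'' clause.

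The main obstacle is the bookkeeping in the first two steps: the enumeration of subgroups of $\GL_2(\Z/9\Z)$ is finite but delicate, and one must verify that both $H_1$ and $H_2$ really do occur as genuine mod-$9$ images (i.e.\ that $X_{H_1}$ and $X_{H_2}$ carry non-cuspidal rational points whose fibres produce $\Z/3\Z\oplus\Z/9\Z$ and no more). The extraction of the explicit rational maps $j_1$ and $j_2$, while routine given the available parameterizations, is the most computationally involved step and is carried out in the accompanying Magma scripts \cite{magmascripts}.
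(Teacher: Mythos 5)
There is a genuine gap in your second step. Your claimed enumeration result --- that every admissible $G\subseteq\GL_2(\Z/9\Z)$ (surjective determinant, containing a complex-conjugation element, not of generalized $S_3$-type, and containing a normal subgroup $N$ acting trivially on a submodule isomorphic to $\Z/3\Z\oplus\Z/9\Z$ with $G/N$ of generalized $S_3$-type) is conjugate to a subgroup of $H_1$ or $H_2$ --- is false. The enumeration actually produces a third maximal group,
\[
H_3:=\left\langle
\begin{pmatrix}1&1\\3&1\end{pmatrix},
\begin{pmatrix}2&0\\0&1\end{pmatrix},
\begin{pmatrix}2&0\\0&2\end{pmatrix}
\right\rangle,
\]
which satisfies every one of your group-theoretic constraints (its mod-$3$ reduction even lies in the Borel, so the rational $3$-isogeny condition from Lemma~\ref{lem-3tor} does not exclude it either). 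Consequently the ``only if'' direction of the lemma cannot be established by group theory alone: one must dispose of $H_3$ arithmetically. Its associated congruence subgroup is $\textrm{9A}^1$ of genus $1$; the modular curve $X_{H_3}$ is isomorphic to the rank-zero elliptic curve \texttt{27a3}, which has exactly three rational points, two of which are cusps and the third of which corresponds to $j$-invariant $0$. One then has to check (on a single representative, invoking Proposition~\ref{prop-twist}) that every $E/\Q$ with $j(E)=0$ in fact has $E(\Q(3^\infty))(3)\simeq\Z/9\Z\oplus\Z/9\Z$, so that $H_3$ contributes nothing to the $\Z/3\Z\oplus\Z/9\Z$ case. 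Without this step your argument would wrongly omit a family of potential mod-$9$ images, and the biconditional would be unproved.

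The rest of your outline matches the paper's proof: the use of Lemmas~\ref{lem-3tor},~\ref{lem-9x27}, Proposition~\ref{prop-no27} and Lemma~\ref{lem-9x9} to pin down the constraints on $G$, the verification that $H_1$ and $H_2$ themselves are not of generalized $S_3$-type but admit suitable normal subgroups $N_i$ (which gives the ``if'' direction, since any non-generalized-$S_3$-type $G\subseteq H_i$ inherits $G\cap N_i$ with the same properties), and the identification of $X_{H_1}$, $X_{H_2}$ as genus-zero curves (congruence subgroups $\textrm{9B}^0$ and $\textrm{9C}^0$) with $j$-maps taken from \cite{SZ}. Only the treatment of $H_3$ is missing, and it is an essential ingredient, not bookkeeping.
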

\begin{proof}
It is easy to verify that neither $H_1$ nor $H_2$ are of generalized $S_3$-type (which rules out $E(\Q(3^\infty))(3)\simeq \Z/9\Z\oplus\Z/9\Z$), and that each contains a normal subgroup $N_i$ for which the quotient $H_i/N_i$ is of generalized $S_3$-type, and for which the image of $N_i$ in $\GL_2(\Z/3\Z)$ is trivial and for which $N_i$ acts trivially on an element of order 9 in $\Z/9\Z\oplus\Z/9\Z$.
This implies that if $\Gal(\Q(E[9])/\Q)\simeq H_i$ then the base change of $E$ to the field $K_i\subseteq \Q(3^\infty)$ corresponding to the normal subgroup of $\Gal(\Q(E[9])/\Q)$ isomorphic to $N_i$ has torsion subgroup that contains a subgroup isomorphic to $\Z/3\Z\oplus\Z/9\Z$; moreover, $E(Q(3^\infty))(3)$ cannot be any larger than this because we have ruled out any points of order 27 in $E(\Q(3^\infty))$ (Proposition~\ref{prop-no27}) and $N_i$ cannot be the trivial group.

An enumeration of the subgroups of $\GL_2(\Z/9\Z)$ shows that every group $G$ that is not of generalized $S_3$-type and which contains a normal subgroup $N$ satisfying all the properties of $N_i$ above is either conjugate to a subgroup of $H_1$ or $H_2$, or is conjugate to a subgroup of
\[
H_3:=\left\langle
\begin{pmatrix}1&1\\3&1\end{pmatrix},
\begin{pmatrix}2&0\\0&1\end{pmatrix},
\begin{pmatrix}2&0\\0&2\end{pmatrix}
\right\rangle,
\]
with congruence subgroup \href{http://www.uncg.edu/mat/faculty/pauli/congruence/csg1.html#group9A1}{$\textrm{9A}^1$}.
As computed by Rouse and Zureick-Brown (using the techniques of~\cite{RZB}), the corresponding modular curve $X_{H_3}$ has genus 1 and is isomorphic to the elliptic curve \href{http://www.lmfdb.org/EllipticCurve/Q/27a3}{\texttt{27a3}}, which has just 3 rational points; two of these are cusps, while the other corresponds to $j$-invariant 0.
But for every elliptic curve $E/\Q$ with $j$-invariant $0$, we have $E(\Q^\infty)(3)\simeq \Z/9\Z\oplus\Z/9\Z$ as can be verified by checking one example and applying Proposition~\ref{prop-twist}.

The groups $H_1$ and $H_2$ yield congruence subgroups \href{http://www.uncg.edu/mat/faculty/pauli/congruence/csg0.html#group9B0}{$\textrm{9B}^0$} and
\href{http://www.uncg.edu/mat/faculty/pauli/congruence/csg0.html#group9C0}{$\textrm{9C}^0$}, respectively, both of genus zero; the maps $j_1(t)$ and $j_2(t)$ to the $j$-line are taken from \cite{SZ}.
\end{proof}

\subsection{Proof of Theorem \ref{thm-upperbound}}
Let $E/\Q$ be an elliptic curve.
Proposition \ref{prop-possible_primes} shows that any prime divisor~$p$ of the order of $E(\Q(3^\infty))_\tor$ lies in the set $\{2,3,5,7,13\}$.
Lemma \ref{lem-2primary} ($p=2$), Lemma~\ref{lem-9x27} and Proposition \ref{prop-no27} ($p=3$), Lemma \ref{lem-5tor} ($p=5$), Lemma \ref{lem-7tor} ($p=7$), and Lemma \ref{lem-13tor} ($p=13$) together imply that $E(\Q(3^\infty))_\tor$ is isomorphic to a subgroup of
\[
T_{\rm max} = (\Z/16\Z\oplus\Z/8\Z)\oplus(\Z/9\Z\oplus\Z/9\Z)\oplus\Z/5\Z\oplus(\Z/7\Z\oplus\Z/7\Z)\oplus\Z/13\Z.
\]
Examples \ref{ex-2max}, \ref{ex-3max}, \ref{ex-5max}, \ref{ex-7max}, \ref{ex-13max} for $p=2,3,5,7,13$, respectively, show that $T_{\rm max}$ is the smallest group with this property.\qed

\subsection{An algorithm to compute the structure of \texorpdfstring{$E(\Q(3^\infty))_\tor$}{\textit{E}Q(3*))tors}}
\label{sec:algorithm}

With Theorem~\ref{thm-upperbound} in hand we can now sketch a practical algorithm to compute the isomorphism type of $E(\Q(3^\infty))_\tor$ for a given elliptic curve $E/\Q$, which we may assume is defined by $y^2=f(x)$.  The strategy is to separately compute each $p$-primary component $E(\Q(3^\infty))(p)$ for $p=2,3,5,7,13$ by first determining the largest integer $k$ for which $E(\Q(3^\infty))[p^k]=E[p^k]$ and then determining the largest integer $j$ for which $E(\Q(3^\infty))(p)$ contains a point of order $p^j$.

Both steps make use of the division polynomials $f_{E,n}(x)$ whose roots are the distinct $x$-coordinates of the nonzero points $P\in E[n]$.
The polynomials $f_{E,n}(x)$ satisfy well-known recurrence relations that allow them to be efficiently computed; see \cite{mckee}, for example.
If $m$ divides $n$ then $f_{E,n}$ is necessarily divisible by $f_{E,m}$, and roots of the polynomial $f_{E,n}/f_{E,m}$ are the distinct $x$-coordinates of the points in $E[n]$ that do not lie in $E[m]$; by removing the factor $f_{E,m}$ of $f_{E,n}$ for each maximal proper divisor $m$ of~$n$ one obtains a polynomial $h_{E,n}$ whose roots are the distinct $x$-coordinates of the points in $E[n]$ of order~$n$.

The field $\Q(E[n])$ is an extension of the splitting field $K_f$ of $f_{E,n}(x)$ of degree at most~2 (the degree is 2 when $\im \rho_{E,n}$ contains $-1\in \GL_2(\Z/n\Z)$, and 1 otherwise, see \cite[Lemma~5.17]{sutherland2}). A necessary and sufficient condition for $\Q(E[n]))\subseteq \Q(3^\infty)$ is that for every irreducible factor $g$ of $h_{E,n}(x)$ with splitting field $K_g$, the field $L_g:=K_g(\sqrt{f(r)})$ is of generalized $S_3$-type, where $f(x)$ is the cubic defining $E:y^2=f(x)$ and $r$ is any root of $g$; note that each field $L_g$ is of the form $\Q(P)$ for some $P\in E[n]$ of order $n$ and is necessarily a Galois extension of $\Q$ that contains the coordinate of every point in $\langle P\rangle$.
A necessary and sufficient condition for $E(\Q(3^\infty))_\tor$ to contain a point of order $n$ is that for some irreducible factor $g$ of $h_{E,n}(x)$ the field $L_g$ is of generalized $S_3$-type.

We may thus compute $E(\Q(3^\infty))(p)$ as follows:

\begin{itemize}
\item Determine the largest $k$ for which $E[p^k]\subseteq \Q(3^\infty)$ by checking increasing values of $k$ from 1 up to the bound given by Theorem~\ref{thm-upperbound}.  For each $k$, compute the polynomial $h_{E,p^k}$, factor it over $\Q$, and for each irreducible factor $g$ compute the field $L_g$ and check whether $\Gal(L_g/\Q)$ is of generalized $S_3$-type (via Lemma~\ref{lem-s3char}) for all $g$.
\smallskip

\item Determine the largest $j$ for which $E(\Q(3^\infty))(p)$ contains a point of order $p^j$ by checking increasing values of $j$ from $k$ up to the bound given by Theorem~\ref{thm-upperbound}.  For each $k$, compute the polynomial $h_{E,p^j}$, factor it over $\Q$, and for each irreducible factor $g$ compute the field $L_g$ and check whether $\Gal(L_g/\Q)$ is of generalized $S_3$-type for some $g$.
\end{itemize}

As written this algorithm is not quite practical, but there are two things that may be done to make it so.
First, one can use a Monte Carlo algorithm to quickly rule out polynomials $g$ whose splitting fields cannot be of generalized $S_3$-type by picking random primes and factoring the reduced polynomial $\bar g$ over the corresponding finite field; if $\bar g$ has an irreducible factor whose degree does not divide 6 then the splitting field of $g$ cannot be of generalized $S_3$-type.  The second practical improvement is to use the explicit criterion for $j(E)$ given by Lemmas~\ref{lem-7tor},~\ref{lem-9x9}, and~\ref{lem-3x9} to more quickly compute the 3-primary and 7-primary components of $E(\Q(3^\infty))_\tor$.\footnote{We did not exploit this second improvement when using the algorithm to perform any of the explicit computations of $E(\Q(3^\infty))_\tor$ cited in \S\ref{sec-max}, since this improvement depends on some of these computations.}

A magma script implementing the algorithm with these optimizations can be found in \cite{magmascripts}; it was used to determine the 20 examples of minimal conductor that appear in Remark~\ref{rem-examples}.
These examples prove that each of these cases arise; in the next section we prove that no others do.

\begin{remark}
In Section~\ref{sec-param} we obtain a complete list of parameterizations for each torsion structure $E(\Q(3^\infty))_\tor$; see Table~\ref{table-param}.
With this list in hand one can immediately determine $E(\Q(3^\infty))_\tor$ from $j(E)$ whenever $j(E)\ne 1728$, making it  unnecessary to use the algorithm sketched above, except for distinguishing the two possibilities when $j(E)=1728$ (see Remark~\ref{rem-twist}).
However, the algorithm is implicitly used in several of the proofs in the next section that require us to explicitly check a finite number of cases, and our list of parameterizations depends on these results.  (We did not use the algorithm to prove any of the results in this section; see \cite{magmascripts} for details of our computations.)
\end{remark}

\section{The Structure of \texorpdfstring{$E(\Q(3^\infty))_\tor$}{\textit{E}Q(3*))tors}}

In this section we complete the classification of the torsion structures $T\simeq E(\Q(3^\infty))_\tor$ that appear in Theorem \ref{thm-main}.
There are a total of 1008 isomorphism types $T$ given by subgroups of the maximal group $T_{\max}$ that appears in Theorem~\ref{thm-upperbound}, of which 648 contain the minimal subgroup $\Z/2\Z\oplus\Z/2\Z$ required by Lemma \ref{lem-2tor}, but we will prove that in fact only 20 occur as $E(\Q(3^\infty))_\tor$ for some elliptic curve $E/\Q$.
In the five subsections that follow, for $p=13$, 7, 5, 3, 2, we will prove that there are 1, 4, 2, 5, 8 (respectively) possibilities for $T$ when $p$ is the largest prime divisor of its cardinality, and determine these $T$ explicitly.

We begin with a lemma that allows us to distinguish the two possibilities for $E(\Q(2^\infty))(2)$ permitted by Lemma~\ref{lem-2primary} when $E(\Q)[2]$ is trivial.
For an elliptic curve $E/\Q$, we use $\Delta(E)\in \Q^\times$ to denote its discriminant.
We recall that for $j(E)\ne 0,1728$, the image of $\Delta(E)$ in $\Q^\times/\Q^{\times 2}$ is determined by $j(E)$ (see \cite[Cor. 5.4.1]{silverman}); in fact,
\begin{equation}\label{eq-dsq}
\Delta(E)\equiv j(E)-1728\quad (\text{in }\Q^\times/\Q^{\times 2}),
\end{equation}
as one can verify by computing the discriminant $\Delta(E)=-16(4A^3+27B^2)$ of the elliptic curve $E:y^2=x^3+Ax+B$ with $A=3j(E)(1728-j(E))$ and $B=2j(E)(1728-j(E))^2$ both nonzero.

\begin{lemma}\label{lem-full4tor}
Let $E/\Q$ be an elliptic curve for which $E(\Q)[2]$ is trivial, but $E(\Q(3^\infty))[4]=E[4]$. Then $-\Delta(E)$ is a square in $\Q$ and
\[
j(E) = \frac{-4(t^2-3)^3(t^2 - 8t - 11)}{(t+1)^4},
\]
for some $t \in \Q \setminus \{-1\}$.
\end{lemma}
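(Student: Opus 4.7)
By hypothesis $\Q(E[4])\subseteq\Q(3^\infty)$, so by Theorem~\ref{thm-galq3} the group $G:=\im(\rho_{E,4})\subseteq \GL_2(\Z/4\Z)$, identified with $\Gal(\Q(E[4])/\Q)$, is of generalized $S_3$-type. Since $E(\Q)[2]$ is trivial, the image of $\rho_{E,2}$ equals $\GL_2(\F_2)\cong S_3$, so the reduction $G\to\GL_2(\F_2)$ is surjective. In addition, $G$ must have surjective determinant, contain $-I$, and contain an element with trace $0$ and determinant $-1$ (corresponding to complex conjugation). The plan is to enumerate in Magma all conjugacy classes of subgroups $H\subseteq \GL_2(\Z/4\Z)$ satisfying these constraints, and identify the corresponding modular curves.

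Next, I would show that the determinant character $\det\colon G\to(\Z/4\Z)^\times$ factors through the quotient $G\twoheadrightarrow S_3$ as the sign character. The kernel of reduction $\GL_2(\Z/4\Z)\to\GL_2(\F_2)$ consists of matrices $I+2M$, for which $\det(I+2M)=1+2\,\mathrm{tr}(M)\bmod 4$. Because $G$ has exponent dividing $6$ (Lemma~\ref{lem-s3char}) and the image of such an $I+2M$ in $\GL_2(\F_2)$ is trivial, I expect that the generalized $S_3$-type constraint forces $\mathrm{tr}(M)\equiv 0\pmod 2$ for every $I+2M\in G\cap \ker$; equivalently, $\det|_{G\cap\ker}\equiv 1\bmod 4$. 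This would mean $\det|_G$ descends to a non-trivial character of $S_3$, which must be the sign character. Since the sign character of $\Gal(\Q(E[2])/\Q)\cong S_3$ has fixed field $\Q(\sqrt{\Delta(E)})$, while $\det$ has fixed field $\Q(\zeta_4)=\Q(i)$, it would follow that $\Q(\sqrt{\Delta(E)})=\Q(i)$, and hence $-\Delta(E)\in\Q^{\times 2}$.

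Third, I would identify the modular curve $X_H$ for the (expected unique, up to conjugacy) maximal subgroup $H$ arising above. The corresponding congruence subgroup $\Gamma_H\subseteq\PSL_2(\Z)$ can be computed directly, and I expect $X_H$ to have genus $0$ with a rational point, so to be isomorphic to $\P^1_\Q$. Using the Rouse--Zureick-Brown classification \cite{RZB} of $2$-adic images (or by direct computation via the surjection from $X(4)$), I would read off an explicit model of $X_H$ together with its map to the $j$-line; after reparameterization the map should simplify to the rational function stated in the lemma.

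The main obstacles will be (a) verifying by an exhaustive Magma search that there really is just one (maximal) subgroup $H$, up to the ambiguities that preserve $j$-invariant, and (b) massaging the raw $j$-map from the Rouse--Zureick-Brown tables into the compact form $-4(t^2-3)^3(t^2-8t-11)/(t+1)^4$. Both steps are essentially mechanical once the subgroup has been pinned down, and the discriminant condition $-\Delta(E)\in\Q^{\times2}$ either follows from the parity analysis above or can alternatively be verified a posteriori from \eqref{eq-dsq} by plugging $j(E)=j(t)$ into $j(E)-1728$ and checking that the result is a square in $\Q(t)$.
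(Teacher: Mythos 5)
Your overall strategy is the same as the paper's: enumerate the subgroups $G\subseteq\GL_2(\Z/4\Z)$ compatible with the hypotheses, find that they all lie (up to conjugacy) in a single group $H$ whose modular curve is the genus-zero curve \texttt{X20a} of \cite{RZB}, read off the map $j(t)$ to the $j$-line, and deduce the discriminant condition from \eqref{eq-dsq} by checking that $j(t)-1728$ is $-1$ times a square in $\Q(t)$ (your ``a posteriori'' fallback is exactly the paper's argument, and it is the cleanest way to finish). However, there is one genuine flaw in your setup. From the triviality of $E(\Q)[2]$ you conclude that $\rho_{E,2}$ is \emph{surjective} onto $\GL_2(\F_2)\cong S_3$. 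That does not follow: triviality of $E(\Q)[2]$ only says the mod-$2$ image fixes no nonzero element of $E[2]$, so it has order at least $3$ and could be the cyclic group of order $3$ (which happens precisely when $\Delta(E)$ is a square). The paper's enumeration is run with this weaker, correct constraint. If you impose surjectivity mod $2$ in your Magma search, you may silently discard admissible subgroups $G$ with mod-$2$ image $\Z/3\Z$, and your proof would then say nothing about curves with square discriminant satisfying the hypotheses; ruling those out is part of what the lemma asserts (since its conclusion $-\Delta(E)\in\Q^{\times 2}$ is incompatible with $\Delta(E)\in\Q^{\times 2}$). The fix is simply to enumerate with the condition ``the image in $\GL_2(\Z/2\Z)$ fixes no nonzero vector'' and let the computation show that every surviving $G$ lands inside $H$.

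A related remark on your ``parity analysis'': the claim that $\det$ descends to the sign character of the mod-$2$ quotient presupposes that this quotient is $S_3$ (a nontrivial character of $\Z/3\Z$ with values in $\{\pm 1\}$ does not exist, so surjectivity of $\det$ would already force the mod-$2$ image to be $S_3$ \emph{if} $\det$ killed the kernel of reduction --- but that is exactly the point you have not established). So this route is circular as sketched and should be treated as a heuristic; the verification via \eqref{eq-dsq} after the $j$-map is in hand is the argument to rely on.
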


\begin{proof}
If $E(\Q)[2]$ is trivial and $E(\Q(3^\infty))[4]=E[4]$ then the image $G:=\im \rho_{E,4}$ is conjugate to a subgroup of $\GL_2(\Z/4\Z)$ of generalized $S_3$-type whose image in $\GL_2(\Z/2\Z)$ does not fix any nonzero element of $\Z/2\Z\oplus\Z/2\Z$ (equivalently, has order at least 3).
As noted in \S 2, the group $G$ must have a surjective determinant map and contain an element $\gamma$ corresponding to complex conjugation (here we use the stronger criterion of \cite[Rem. 3.15]{sutherland2}).
An enumeration of the subgroups of $\GL_2(\Z/4\Z)$ finds that every such~$G$ is conjugate to a subgroup of
\[
H:=\left\langle
\begin{pmatrix}3&1\\0&1\end{pmatrix},
\begin{pmatrix}0&3\\1&3\end{pmatrix},
\begin{pmatrix}3&0\\0&3\end{pmatrix}
\right\rangle.
\]
The corresponding modular curve $X_H$ is labeled $\texttt{X20a}$ in \cite{RZB} and has genus zero.
A map to the $j$-line is given by the rational function
\[
j(t) := \frac{-4(t^2-3)^3(t^2 - 8t - 11)}{(t+1)^4}.
\]
Since neither $0$ nor $1728$ lie in the image of the map $j(t)$, from \eqref{eq-dsq} we see that the discriminant $\Delta(t)$ of an elliptic curve over $\Q$ with $j$-invariant $j(t)$ must satisfy
\[
\Delta(t) \equiv j(t)-1728\equiv -1\quad (\text{in } \Q^\times/\Q^{\times 2}),
\]
thus $-\Delta(t)$ is always a square, as claimed.
\end{proof}

\subsection{When 13 divides \texorpdfstring{$\#\boldsymbol{E(\Q(3^\infty))_\tor}$}{\#\textit{E}(Q(3*))tors}}

There is only one possibility for $E(\Q(3^\infty))_\tor$ when it contains a point of order $13$.

\begin{prop}\label{prop-13div}
Let $E/\Q$ be an elliptic curve for which $E(\Q(3^\infty))_\tor$ contains a point of order $13$.
Then $E(\Q(3^\infty))_\tor$ is isomorphic to $\Z/2\Z\oplus \Z/26$.
\end{prop}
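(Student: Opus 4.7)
The plan is to first establish the inclusion $\Z/2\Z\oplus\Z/26\Z\subseteq E(\Q(3^\infty))_\tor$ and then rule out any additional torsion. By Lemma~\ref{lem-13tor} the hypothesis forces $E(\Q(3^\infty))(13)\simeq\Z/13\Z$ and the existence of a rational $13$-isogeny on $E$, while Lemma~\ref{lem-2tor} gives $E[2]\subseteq E(\Q(3^\infty))$. Combining these yields the desired inclusion, and by Proposition~\ref{prop-possible_primes} any remaining torsion must live at a prime $p\in\{2,3,5,7\}$.

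The key observation for the upper bound is that a rational $13$-isogeny on $E$ precludes any rational $p$-isogeny for $p\in\{2,3,5,7\}$. Indeed, since $\gcd(13,p)=1$, the kernel of a rational $p$-isogeny would intersect the kernel of the $13$-isogeny trivially, so their sum would be a Galois-stable cyclic subgroup of order $13p$, giving rise to a rational cyclic $13p$-isogeny on $E$; but Theorem~\ref{thm-isog} forbids $13p\in\{26,39,65,91\}$. Since a rational $2$-isogeny is equivalent to the existence of a rational point of order~$2$, this also shows that $E(\Q)[2]$ is trivial.

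Applying this to each odd prime: Lemma~\ref{lem-3tor} forces $E(\Q(3^\infty))(3)$ to be trivial, Lemma~\ref{lem-5tor} does the same for $p=5$, and for $p=7$ Lemma~\ref{lem-7tor} reduces the nontrivial case to $j(E)=2268945/128$. For this special $j$-invariant, one verifies directly that no elliptic curve $E/\Q$ with $j(E)=2268945/128$ admits a rational $13$-isogeny (for instance, by inspecting the isogeny class of \href{http://www.lmfdb.org/EllipticCurve/Q/2450a1}{\texttt{2450a1}} and noting that rational cyclic isogeny degrees are invariant under quadratic twisting), so this possibility does not occur.

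It remains to exclude the possibility that $E(\Q(3^\infty))(2)=E[4]$. Since $E(\Q)[2]$ is trivial, Lemma~\ref{lem-2primary} leaves only the two options $E[2]$ and $E[4]$, and the latter would, by Lemma~\ref{lem-full4tor}, force $j(E)$ to lie in the image of the rational parameterization coming from the modular curve $X_{20a}$ of \cite{RZB}. The main obstacle is showing that this family and the $j$-image of $X_0(13)$ have no common non-cuspidal rational points. I plan to form the corresponding modular curve $X_H$ of level $52$, where $H\subseteq\GL_2(\Z/52\Z)$ is the intersection of the preimage of the mod-$4$ group from Lemma~\ref{lem-full4tor} with the preimage of a Borel subgroup mod~$13$; equivalently, one can take the normalization of the fiber product $X_{20a}\times_{X(1)}X_0(13)$. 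I expect this to be a curve of small positive genus whose rational points are all cuspidal, to be verified via a descent on its Jacobian followed by Chabauty's method, in the same spirit as the argument for $X_{H_4}$ in the proof of Proposition~\ref{prop-no27}.
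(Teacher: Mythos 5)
Your overall structure tracks the paper's proof closely: Lemma~\ref{lem-13tor} gives the rational $13$-isogeny, Theorem~\ref{thm-isog} then forbids any other rational $n$-isogeny (your cyclic-sum argument for the coprime composite isogeny is exactly the implicit reasoning), and this kills the $3$- and $5$-primary parts via Lemmas~\ref{lem-3tor} and~\ref{lem-5tor}, forces $E(\Q)[2]$ to be trivial, and reduces $p=7$ to the single $j$-invariant $2268945/128$. Your disposal of that case by twist-invariance of rational cyclic isogeny degrees is valid (the paper instead invokes Corollary~\ref{cor-14x14}, which already pins down the full torsion for that $j$-invariant); both routes work.

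The gap is in the last and hardest step, ruling out $E(\Q(3^\infty))(2)=E[4]$. You correctly reduce this to showing that the $j$-images of \texttt{X20a} and $X_0(13)$ share no non-cuspidal rational point, but then leave the resolution entirely as a plan (``I expect'', ``to be verified''): you propose computing the rational points of a level-$52$ fiber product by descent plus Chabauty, without computing its genus or the rank of its Jacobian. That curve has degree $8\cdot 14=112$ over the $j$-line, so its genus is not small, and there is no a priori guarantee that a descent will give rank less than genus or that Chabauty will terminate; as written the argument is incomplete here. The idea you are missing is that Lemma~\ref{lem-full4tor} gives more than a $j$-parameterization: it asserts that $-\Delta(E)$ is a rational square. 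Combined with \eqref{eq-dsq}, which gives $\Delta(E)\equiv j(E)-1728$ in $\Q^\times/\Q^{\times 2}$, and the $X_0(13)$ parameterization $j(t)$, one finds $\Delta(t)\equiv t(t^2+6t+13)$ modulo squares, so the condition that $-\Delta$ be a square cuts out merely a double cover of $X_0(13)\simeq\PP^1$, namely the elliptic curve $y^2=x(x^2-6x+13)$. This curve has rank $0$ and torsion $\Z/2\Z$, and its unique nonzero rational point has $x=0$, so no admissible $t$ exists. That single rank computation replaces your high-genus modular-curve computation and is what actually closes the proof.
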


\begin{proof}
By Lemma \ref{lem-13tor}, $E$ must admit a rational $13$-isogeny, since $E(\Q(3^\infty))(13)$ is non-trivial.
Theorem~\ref{thm-isog} implies that $E$ admits no other rational $n$-isogenies, and it follows that $\Q(3^\infty)(3)$, $\Q(5^\infty)(5)$, and $\Q(7^\infty)(7)$ are all trivial, by Lemma \ref{lem-3tor}, Lemma \ref{lem-5tor}, and Lemma \ref{lem-7tor} and Corollary~\ref{cor-14x14}, respectively.
Since $E$ admits no rational $2$-isogenies, $E(\Q)[2]$ is trivial, and Lemma \ref{lem-2primary} implies that $E(\Q(3^\infty))(2)$ is isomorphic to either $E[2]$ or $E[4]$.
By Lemma \ref{lem-full4tor}, if the latter holds then $-\Delta(E)$ is a rational square; we claim that this cannot occur.

The modular curve $X_0(13)$ that parameterizes 13-isogenies has genus 0 and yields a rational parameterization of the $j$-invariants of elliptic curves $E/\Q$ that admit a rational 13-isogeny.  From \cite[Table 3]{lozano1} we see that $j(E)$ must lie in the image of the rational map
\[
j(t) := \frac{(t^2+5t+13)(t^4+7t^3+20t^2+19t+1)^3}{t}.
\]
Neither $0$ nor $1728$ lie in the image of the map $j(t)$, so by \eqref{eq-dsq}, the corresponding
discriminant $\Delta(t)$ of an elliptic curve over $\Q$ with $j$-invariant $j(t)$ must satisfy
\[
\Delta(t) \equiv (j(t)-1728)^3\equiv t(t^2 + 6t + 13)\quad (\text{in }\Q^\times/\Q^{\times 2}),
\]
with $t\ne 0$.
Finding $t\in\Q^\times$ for which $-\Delta(t)\in \Q$ is a square is equivalent to finding nonzero rational points~$P$ on the elliptic curve
\[
E_\Delta\colon y^2 = x(x^2 - 6x + 13)
\]
for which $x(P)\ne 0$, equivalently, $P\not\in E_\Delta(\Q)[2]$.
But a calculation shows that $E_\Delta$ has rank~$0$ and torsion subgroup isomorphic to $\Z/2\Z$, so no such $P$ exists.
\end{proof}

\begin{remark}\label{rem-13inf}
One can obtain infinitely many elliptic curves $E/\Q$ with $E(\Q(3^\infty))_\tor\simeq \Z/2\Z\oplus \Z/26$ and distinct $j$-invariants by choosing $E$ for which $E(F)\simeq \Z /13 \Z$ for some cubic field $F$, as shown in \cite{najman}.
The curve \href{http://www.lmfdb.org/EllipticCurve/Q/147b1}{\texttt{147b1}} is an example  $F=\Q[x]/(x^3+x^2-2x-1)$.
\end{remark}

\subsection{When 7 divides \texorpdfstring{$\#\boldsymbol{E(\Q(3^\infty))_\tor}$}{\#\textit{E}(Q(3*))tors}}

We now address the cases where $\#E(\Q(3^\infty))_\tor$ is divisible by 7 (but not 13).
The case where it is also divisible by 49 is already covered by Lemma~\ref{lem-7tor}, and Corollary~\ref{cor-14x14}, which imply that we then must have $E(\Q(3^\infty))_\tor\simeq \Z/14\Z\oplus\Z/14\Z$.
Theorem~\ref{thm-isog} and Lemma~\ref{lem-7tor} then leave us just 3 possibilities to consider: (1) $E$ admits a rational 21-isogeny, (2)~$E$ admits a rational 14-isogeny, (3) $E$ admits a rational 7-isogeny and no others.
These are addressed in the next  three lemmas.
Recall that if $E$ admits a rational $m$-isogeny $\varphi$ and a rational $n$-isogeny $\psi$, with $m$ and $n$ coprime, then it necessarily admits a rational $mn$-isogeny, namely, the isogeny $E\to E/\langle \ker \varphi,\ker \psi\rangle$.

\begin{lemma}\label{lem-21isog}
Let $E/\Q$ be an elliptic curve.
Then $E$ admits a rational $21$-isogeny if and only if $E(\Q(3^\infty))_\tor\simeq \Z/6\Z\oplus\Z/42\Z$.
\end{lemma}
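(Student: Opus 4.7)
I would prove the biconditional by handling each direction separately; the converse is essentially formal while the forward direction requires some finite case analysis.

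For the converse, suppose $E(\Q(3^\infty))_\tor \simeq \Z/6\Z \oplus \Z/42\Z$. The 3-primary component of the right-hand side is $\Z/3\Z \oplus \Z/3\Z$, so $E(\Q(3^\infty))[3] = E[3]$, and Lemma~\ref{lem-3tor} produces a rational 3-isogeny. The 7-primary component is $\Z/7\Z$, which is nontrivial but strictly smaller than $\Z/7\Z \oplus \Z/7\Z$, so Lemma~\ref{lem-7tor} produces a rational 7-isogeny. Composing the two kernels yields a rational 21-isogeny.

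For the forward direction, assume $E$ admits a rational 21-isogeny. By Theorem~\ref{thm-isog}, any rational $m$-isogeny of $E$ with $\gcd(m,21)=1$ composes with the 21-isogeny to give a rational $21m$-isogeny, which must be of an allowed degree; this forces $m=1$, so $E$ admits no rational 2-, 4-, 5-, or 13-isogeny. Lemmas~\ref{lem-5tor} and~\ref{lem-13tor} then give $E(\Q(3^\infty))(5) = E(\Q(3^\infty))(13) = 0$, and the absence of a rational 2-isogeny forces $E(\Q)[2]$ trivial. Since $E$ does admit a rational 7-isogeny, Lemma~\ref{lem-7tor} yields $E(\Q(3^\infty))(7)\simeq\Z/7\Z$; the alternative $\Z/7\Z\oplus\Z/7\Z$ is excluded because, as the proof of Lemma~\ref{lem-7tor} shows, curves with $j(E)=2268945/128$ admit no rational 7-isogeny, contradicting the hypothesis. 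And since $E$ admits a rational 3-isogeny, Lemma~\ref{lem-3tor} gives $E(\Q(3^\infty))[3]=E[3]$.

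It remains to rule out growth in the 2- and 3-primary parts beyond $E[2]$ and $E[3]$. The modular curve $X_0(21)$ has genus 1 and only finitely many non-cuspidal rational points, corresponding to an explicit finite set of $j$-invariants (those of the isogeny class \href{http://www.lmfdb.org/EllipticCurve/Q/162b}{\texttt{162b}}). For each such $j$-invariant, I would apply the algorithm of Section~\ref{sec:algorithm} to verify directly that $E(\Q(3^\infty))(2)\simeq \Z/2\Z\oplus\Z/2\Z$ and $E(\Q(3^\infty))(3)\simeq \Z/3\Z\oplus\Z/3\Z$; Proposition~\ref{prop-twist} reduces this to one representative per $j$-invariant, since none of these equal $0$ or $1728$. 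Combining all $p$-primary components then gives
\[
E(\Q(3^\infty))_\tor \simeq (\Z/2\Z)^2 \oplus (\Z/3\Z)^2 \oplus \Z/7\Z \simeq \Z/6\Z \oplus \Z/42\Z.
\]

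The main obstacle is this last step. A purely conceptual route ruling out 2- and 3-primary growth would require showing that the modular curves arising in Lemmas~\ref{lem-full4tor}, \ref{lem-9x9}, and~\ref{lem-3x9} share no non-cuspidal rational points with $X_0(21)$; the computational shortcut via the explicit enumeration of rational points on $X_0(21)$ is considerably cleaner and is the approach I would favor.
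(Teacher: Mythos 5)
Your proof is correct and takes essentially the same route as the paper: the converse follows from Lemmas~\ref{lem-3tor} and~\ref{lem-7tor} by composing the resulting $3$- and $7$-isogenies, and the forward direction reduces to a direct computation of $E(\Q(3^\infty))_\tor$ for the four $\Qbar$-isomorphism classes (isogeny class \texttt{162b}) admitting a rational $21$-isogeny. The extra preliminary reductions you make for the $5$-, $7$-, and $13$-primary parts are valid but unnecessary, since the paper simply computes the full torsion subgroup for each of the four representatives.
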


\begin{proof}
It follows from Lemmas~\ref{lem-7tor} and \ref{lem-3tor} that if $E(\Q(3^\infty))_\tor\simeq \Z/6\Z\oplus\Z/42\Z$, then $E$ admits a rational 7-isogeny and a rational 3-isogeny, hence a rational 21-isogeny.
From \cite[Table~4]{lozano1} we see that there are just four $\Qbar$-isomorphism classes of elliptic curves $E/\Q$ that admit a rational 21-isogeny, represented by the four elliptic curves in the isogeny class with Cremona label \href{http://www.lmfdb.org/EllipticCurve/Q/162b}{\texttt{162b}}.
A direct computation finds that $E(\Q(3^\infty))_\tor\simeq \Z/6\Z\oplus\Z/42\Z$ for each of these four curves.
\end{proof}

\begin{lemma}\label{lem-14isog}
Let $E/\Q$ be an elliptic curve.  If $E$ admits a rational $14$-isogeny then $E(\Q(3^\infty))_\tor$ is isomorphic to $\Z/2\Z\oplus\Z/14\Z.$
\end{lemma}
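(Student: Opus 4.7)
The plan is to reduce the statement to a finite computation. The modular curve $X_0(14)$ has genus one, its Jacobian has rank zero over $\Q$, and it has only two non-cuspidal rational points, so there are exactly two $\Qbar$-isomorphism classes of elliptic curves $E/\Q$ admitting a rational $14$-isogeny; the corresponding two $j$-invariants are recorded, for example, in \cite[Table~4]{lozano1}. Neither of these equals $1728$, so by Proposition~\ref{prop-twist} the isomorphism type of $E(\Q(3^\infty))_\tor$ depends only on $j(E)$ for the curves in question. It therefore suffices to pick a single representative for each $j$-invariant and compute $E(\Q(3^\infty))_\tor$ directly; this is a routine application of the algorithm described in Section~\ref{sec:algorithm}.

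Structurally, the expected outcome arises as follows. The existence of a rational $14$-isogeny forces rational $2$- and $7$-isogenies but, by Theorem~\ref{thm-isog}, precludes rational $n$-isogenies for $n \in \{3,5,13,49\}$, since none of $42,70,182,98$ appear in the list of permitted isogeny degrees. Hence Lemmas~\ref{lem-3tor}, \ref{lem-5tor}, and \ref{lem-13tor} force the $3$-, $5$-, and $13$-primary components of $E(\Q(3^\infty))_\tor$ to be trivial, and the proof of Lemma~\ref{lem-7tor} forces $E(\Q(3^\infty))(7) \cong \Z/7\Z$. Note that the exceptional $j$-invariant $2268945/128$ that would give full $7$-torsion does not admit a rational $14$-isogeny, since the associated mod-$7$ image is (conjugate into) the normalizer of a non-split Cartan rather than a Borel. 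The rational $2$-isogeny guarantees $E(\Q)[2] \ne 0$, so Lemma~\ref{lem-2primary} identifies $E(\Q(3^\infty))(2)$ with $E(\Q(2^\infty))(2)$.

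The only step not supplied immediately by these structural lemmas is the verification that $E(\Q(2^\infty))(2)$ is no larger than $E[2]$ for the two $j$-invariants in question; a priori it could be $\Z/2\Z \oplus \Z/4\Z$ or $\Z/2\Z \oplus \Z/8\Z$ by Theorem~\ref{thm-fujita}. This will be the main (and essentially only) obstacle. For the two specific $j$-invariants at hand it is a routine finite check, which can be carried out either by running the algorithm of Section~\ref{sec:algorithm} on one representative per $j$-invariant, or, equivalently, by inspecting the $2$-adic Galois image of each via the classification in \cite{RZB} and confirming that no additional $2$-power torsion is acquired over $\Q(2^\infty)$. Once this verification is in place, combining the computed $p$-primary components for $p=2,3,5,7,13$ yields $E(\Q(3^\infty))_\tor \simeq \Z/2\Z \oplus \Z/14\Z$, as claimed.
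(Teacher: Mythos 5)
Your proposal is correct and follows essentially the same route as the paper: reduce via \cite[Table~4]{lozano1} (equivalently, the two non-cuspidal rational points of $X_0(14)$) to two $\Qbar$-isomorphism classes, invoke Proposition~\ref{prop-twist}, and verify $E(\Q(3^\infty))_\tor\simeq\Z/2\Z\oplus\Z/14\Z$ by direct computation on one representative of each class. The additional structural discussion of the $p$-primary components is consistent with the paper's surrounding lemmas but is not needed once the finite check is done.
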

\begin{proof}
From \cite[Table~4]{lozano1} we see that there are just two $\Qbar$-isomorphism classes of elliptic curves $E/\Q$ that admit a rational 14-isogeny, represented by the curves $\href{http://www.lmfdb.org/EllipticCurve/Q/49/a//4}{\texttt{49a1}}$ and $\href{http://www.lmfdb.org/EllipticCurve/Q/49/a/3}{\texttt{49a2}}$.
A direct computation finds that $E(\Q(3^\infty))_\tor\simeq\Z/2\Z\oplus\Z/14\Z$ for both curves.
\end{proof}

\begin{lemma}\label{lem-one7isog}
Let $E/\Q$ be an elliptic curve.
If $E$ admits a rational $7$-isogeny and no other non-trivial rational $n$-isogenies, then $E(\Q(3^\infty))_\tor$ is isomorphic to $\Z/2\Z \oplus \Z/14\Z$ or $\Z/4\Z \oplus \Z/28\Z$.
\end{lemma}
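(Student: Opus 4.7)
The plan is to decompose $E(\Q(3^\infty))_\tor$ into its $p$-primary components for $p\in\{2,3,5,7,13\}$ (the only possibilities, by Proposition~\ref{prop-possible_primes}) and evaluate each using the hypothesis that the only non-trivial rational $n$-isogeny admitted by $E$ is a $7$-isogeny.

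First I would handle the easy primes $p=3,5,13$. Since $E$ admits no rational $3$-isogeny, $5$-isogeny, or $13$-isogeny, Lemmas~\ref{lem-3tor}, \ref{lem-5tor}, and~\ref{lem-13tor} immediately give that the corresponding $p$-primary components $E(\Q(3^\infty))(3)$, $E(\Q(3^\infty))(5)$, and $E(\Q(3^\infty))(13)$ are all trivial.

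Next I would pin down the $7$-primary component. By Lemma~\ref{lem-7tor}, there are three possibilities: trivial, $\Z/7\Z$, or $\Z/7\Z\oplus\Z/7\Z$. Since $E$ admits a rational $7$-isogeny, the first case is ruled out (the lemma characterises the $\Z/7\Z$ case precisely as admitting a rational $7$-isogeny); and the full $7$-torsion case is ruled out because Lemma~\ref{lem-7tor} shows this only happens for $j(E)=2268945/128$, whose associated mod-$7$ image lies in the normaliser of a non-split Cartan and therefore does not contain any Borel subgroup, so such an $E$ cannot admit any rational $7$-isogeny. Hence $E(\Q(3^\infty))(7)\simeq\Z/7\Z$.

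Finally I would determine the $2$-primary component. Because $E$ admits no rational $2$-isogeny, $E(\Q)[2]$ is trivial (any rational point of order $2$ would generate a Galois-stable cyclic subgroup of order $2$). Lemma~\ref{lem-2primary} then forces $E(\Q(3^\infty))(2)\in\{E[2],E[4]\}$, i.e.\ $\Z/2\Z\oplus\Z/2\Z$ or $\Z/4\Z\oplus\Z/4\Z$. Combining the $p$-primary components via the Chinese Remainder Theorem yields
\[
E(\Q(3^\infty))_\tor \;\simeq\; (\Z/2\Z\oplus\Z/2\Z)\oplus\Z/7\Z \;\simeq\; \Z/2\Z\oplus\Z/14\Z
\]
in the first case, and $\Z/4\Z\oplus\Z/28\Z$ in the second, giving exactly the two possibilities stated. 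There is no serious obstacle here; the proof is a direct synthesis of the already-established $p$-primary classification results, with the only subtle point being the verification that the full-$7$-torsion exceptional $j$-invariant $2268945/128$ does not admit a rational $7$-isogeny (which in fact could also be verified by direct computation on any curve with that $j$-invariant, e.g.\ \href{http://www.lmfdb.org/EllipticCurve/Q/2450a1}{\texttt{2450a1}}).
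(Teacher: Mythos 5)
Your proof is correct and follows essentially the same route as the paper: the components at $p=3,5,13$ are killed by Lemmas~\ref{lem-3tor}, \ref{lem-5tor}, \ref{lem-13tor}, the $7$-part is $\Z/7\Z$ by Lemma~\ref{lem-7tor}, and the $2$-part is $E[2]$ or $E[4]$ by Lemma~\ref{lem-2primary} since the absence of a rational $2$-isogeny forces $E(\Q)[2]$ to be trivial. The only difference is that you spell out details (the triviality of $E(\Q)[2]$ and the exclusion of full $7$-torsion via the non-split Cartan normalizer) that the paper leaves implicit.
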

\begin{proof}
Lemmas \ref{lem-3tor}, \ref{lem-5tor}, and \ref{lem-13tor} imply that $E(\Q(3^\infty))(p)$ is trivial for $p=3,5,13$, and Lemma \ref{lem-2primary} implies that $E(\Q(3^\infty))(2)=E[2]$ or $E[4]$.
\end{proof}

We summarize the results of this subsection in the following proposition.

\begin{prop}\label{prop-7div}
Let $E/\Q$ be an elliptic curve for which $E(\Q(3^\infty))_\tor$ contains a point of order~$7$.
Then $E(\Q(3^\infty))_\tor$ is isomorphic to one of the groups: $\Z/2\Z\oplus\Z/14\Z,\ \Z/4\Z\oplus\Z/28\Z,\ \Z/6\Z\oplus \Z/42\Z$, $\Z/14\Z \oplus \Z/14\Z$.
\end{prop}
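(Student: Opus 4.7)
The plan is to deduce the proposition as a direct corollary of the preceding lemmas via a case analysis on the $7$-primary part $E(\Q(3^\infty))(7)$.

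By Lemma~\ref{lem-7tor}, the presence of a point of order~$7$ forces $E(\Q(3^\infty))(7)$ to be isomorphic to either $\Z/7\Z$ or $\Z/7\Z\oplus\Z/7\Z$. In the latter case, Corollary~\ref{cor-14x14} immediately yields $E(\Q(3^\infty))_\tor\simeq\Z/14\Z\oplus\Z/14\Z$, accounting for one of the four listed groups.

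In the former case, Lemma~\ref{lem-7tor} tells us that $E$ admits a rational $7$-isogeny (unique, by Lemma~\ref{lem-piso}). I would then split on which additional rational $m$-isogenies $E$ can admit with $\gcd(m,7)=1$: any such $m$ combines with the $7$-isogeny to produce a rational $7m$-isogeny, and Theorem~\ref{thm-isog} leaves only $m\in\{1,2,3\}$. In particular $m=6$ is forbidden, so a rational $2$-isogeny and a rational $3$-isogeny cannot occur simultaneously. The three sub-cases are: (i) $E$ admits a rational $14$-isogeny, in which case Lemma~\ref{lem-14isog} gives $\Z/2\Z\oplus\Z/14\Z$; (ii) $E$ admits a rational $21$-isogeny, in which case Lemma~\ref{lem-21isog} gives $\Z/6\Z\oplus\Z/42\Z$; (iii) $E$ admits no non-trivial rational $n$-isogeny other than the $7$-isogeny itself (rational $5$- and $13$-isogenies are already ruled out, since $35$ and $91$ are forbidden degrees), in which case Lemma~\ref{lem-one7isog} gives $\Z/2\Z\oplus\Z/14\Z$ or $\Z/4\Z\oplus\Z/28\Z$.

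Taking the union across all cases recovers exactly the four groups in the statement. There is no serious obstacle here: this proposition is essentially a bookkeeping step that collates the results of Lemmas~\ref{lem-7tor}, \ref{lem-piso}, \ref{lem-14isog}, \ref{lem-21isog}, and \ref{lem-one7isog} with Corollary~\ref{cor-14x14} and the isogeny-degree classification of Theorem~\ref{thm-isog}.
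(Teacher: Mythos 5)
Your proposal is correct and follows essentially the same route as the paper: the authors also dispose of the $\Z/7\Z\oplus\Z/7\Z$ case via Corollary~\ref{cor-14x14} and then use Theorem~\ref{thm-isog} to reduce the $\Z/7\Z$ case to the three sub-cases of a rational $21$-isogeny, a rational $14$-isogeny, or a lone $7$-isogeny, handled by Lemmas~\ref{lem-21isog}, \ref{lem-14isog}, and \ref{lem-one7isog} respectively. Your write-up just makes explicit the bookkeeping (uniqueness of the $7$-isogeny via Lemma~\ref{lem-piso}, exclusion of degrees $35$, $42$, $91$) that the paper relegates to the paragraph preceding those lemmas.
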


\begin{proof}
This follows from Corollary~\ref{cor-14x14} and Lemmas~\ref{lem-21isog}, \ref{lem-14isog}, \ref{lem-one7isog}.
\end{proof}

\subsection{When 5 divides \texorpdfstring{$\#\boldsymbol{E(\Q(3^\infty))_\tor}$}{\#\textit{E}(Q(3*))tors}}

We now address the cases where $\#E(\Q(3^\infty))_\tor$ is divisible by 5 (but not 7 or 13).

\begin{lemma}\label{lem-15isog}
Let $E/\Q$ be an elliptic curve.
If $E$ admits a rational $15$-isogeny then  $E(\Q(3^\infty))_\tor$ is isomorphic to $\Z/6\Z\oplus\Z/6\Z$ or $\Z/6\Z\oplus \Z/30\Z$ (both occur).
If $E(\Q(3^\infty))_\tor\simeq \Z/6\Z\oplus\Z/30\Z$ then $E$ admits a rational $15$-isogeny.
\end{lemma}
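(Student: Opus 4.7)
The plan is to split into the forward implication and its converse, reducing the forward direction to a finite computation. Using Theorem~\ref{thm-isog}, a rational 15-isogeny on $E$ precludes any rational $n$-isogeny with $n\in\{2,7,13\}$, since the products $30$, $105$, $195$ do not appear in the list of allowed isogeny degrees. Combining this with Lemma~\ref{lem-3tor}, Lemma~\ref{lem-5tor}, Corollary~\ref{cor-14x14} together with Lemma~\ref{lem-7tor}, Lemma~\ref{lem-13tor}, and Lemma~\ref{lem-2primary} forces $E[3]\subseteq E(\Q(3^\infty))$, $E(\Q(3^\infty))(5)\subseteq\Z/5\Z$, $E(\Q(3^\infty))(2)\in\{E[2],E[4]\}$, and trivial 7- and 13-primary parts. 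Thus the only remaining freedom is whether the 2-primary part grows to $E[4]$ and whether the 3-primary part grows beyond $E[3]$.

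The modular curve $X_0(15)$ has genus $1$ and only finitely many rational points (its Jacobian is a rank-zero elliptic curve in the isogeny class \href{http://www.lmfdb.org/EllipticCurve/Q/15a}{\texttt{15a}}), so there are finitely many---in fact exactly four---$\Qbar$-isomorphism classes of $E/\Q$ admitting a rational 15-isogeny, with $j$-invariants as in \cite[Table 4]{lozano1}, none equal to $1728$. By Proposition~\ref{prop-twist}, $E(\Q(3^\infty))_\tor$ depends only on this class, so it suffices to check the conclusion on four representatives, for instance curves drawn from the isogeny classes \texttt{50a} and \texttt{50b}. A direct application of the algorithm of Section~\ref{sec:algorithm} to each representative returns either $\Z/6\Z\oplus\Z/6\Z$ or $\Z/6\Z\oplus\Z/30\Z$, and both values arise (Remark~\ref{rem-examples} records \texttt{50a3} as an instance realizing $\Z/6\Z\oplus\Z/30\Z$; at least one other representative realizes $\Z/6\Z\oplus\Z/6\Z$).

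For the converse, if $E(\Q(3^\infty))_\tor\simeq\Z/6\Z\oplus\Z/30\Z$, then $E(\Q(3^\infty))$ contains a point of order $5$, so Lemma~\ref{lem-5tor} yields a rational 5-isogeny $\varphi_5$, and it contains $\Z/3\Z\oplus\Z/3\Z$, so $E(\Q(3^\infty))[3]=E[3]$ and Lemma~\ref{lem-3tor} yields a rational 3-isogeny $\varphi_3$. Since $\ker\varphi_3$ and $\ker\varphi_5$ have coprime orders, their sum is a Galois-stable cyclic subgroup of $E[15]$, and the quotient by it is the required rational 15-isogeny. The main obstacle is the finite verification in the forward direction---ensuring that the 2-primary part does not jump to $E[4]$ and that the 3-primary part does not grow beyond $E[3]$ on any of the four classes. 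For the 2-primary part, Lemma~\ref{lem-full4tor} provides a clean structural reason: $E[4]\subseteq E(\Q(3^\infty))$ would force $-\Delta(E)$ to be a rational square, and this can be checked to fail at each of the four $j$-invariants; for the 3-primary part, the explicit $j$-invariant criteria of Lemmas~\ref{lem-9x9} and~\ref{lem-3x9} likewise reduce the check to a finite verification.
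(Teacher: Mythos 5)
Your proposal is correct and follows essentially the same route as the paper: reduce the forward direction to a finite check on the four $\Qbar$-isomorphism classes admitting a rational $15$-isogeny (the paper simply computes $E(\Q(3^\infty))_\tor$ directly for the four curves \texttt{50a1}--\texttt{50a4}, all in the single isogeny class \texttt{50a}, not \texttt{50a} and \texttt{50b}), and obtain the converse by composing the rational $3$- and $5$-isogenies furnished by Lemmas~\ref{lem-3tor} and~\ref{lem-5tor}. The additional structural constraints you derive on the $2$-, $3$-, $7$-, and $13$-primary parts are sound but subsumed by the direct computation.
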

\begin{proof}
As can be seen in \cite[Table 4]{lozano1}, there are four $\Qbar$-isomorphism classes of elliptic curves $E/\Q$ that admit a rational $15$-isogeny, represented by the four curves in isogeny class \href{http://www.lmfdb.org/EllipticCurve/Q/50a}{\texttt{50a}}.
A direct computation finds that $E(\Q(3^\infty))_\tor\simeq \Z/6\Z\oplus \Z/6\Z$ for the curves
\href{http://www.lmfdb.org/EllipticCurve/Q/50a1}{\texttt{50a1}} and
\href{http://www.lmfdb.org/EllipticCurve/Q/50a2}{\texttt{50a2}},
while $E(\Q(3^\infty))_\tor\simeq \Z/6\Z\oplus \Z/30\Z$ for the curves
\href{http://www.lmfdb.org/EllipticCurve/Q/50a3}{\texttt{50a3}} and
\href{http://www.lmfdb.org/EllipticCurve/Q/50a4}{\texttt{50a4}}.
It follows from Lemmas~\ref{lem-5tor} and~\ref{lem-3tor} that if  $E(\Q(3^\infty))_\tor\simeq \Z/6\Z\oplus \Z/30\Z$ then $E$ admits a rational 5-isogeny  and a rational 3-isogeny, hence a rational 15-isogeny.
\end{proof}

\begin{prop}\label{prop-5div}
Let $E/\Q$ be an elliptic curve for which $E(\Q(3^\infty))$ contains a point of order $5$
Then $E(\Q(3^\infty))_\tor$ is isomorphic to $\Z/2\Z\oplus\Z/10\Z$ or $\Z/6\Z\oplus\Z/30\Z$.
\end{prop}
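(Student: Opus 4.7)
The argument follows the pattern of Propositions~\ref{prop-13div} and~\ref{prop-7div}: compute each prime-power primary component of $E(\Q(3^\infty))_\tor$ separately, eliminating the large primes first and then pinning down the $2$- and $3$-primary pieces. By Lemma~\ref{lem-5tor}, the hypothesis gives $E(\Q(3^\infty))(5) \simeq \Z/5\Z$ and a rational $5$-isogeny on $E$; by Lemma~\ref{lem-2tor} we also have $E[2] \subseteq E(\Q(3^\infty))$, so $\Z/2\Z \oplus \Z/10\Z$ is already a subgroup of $E(\Q(3^\infty))_\tor$.

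Since a rational $5n$-isogeny with $n\in\{7,13\}$ would produce a $35$- or $65$-isogeny forbidden by Theorem~\ref{thm-isog}, $E$ admits neither a rational $7$-isogeny nor a rational $13$-isogeny; Lemmas~\ref{lem-7tor} and~\ref{lem-13tor} then leave $E(\Q(3^\infty))(13)=0$ and $E(\Q(3^\infty))(7)\in\{0,\, \Z/7\Z\oplus\Z/7\Z\}$. The second possibility forces $j(E)=2268945/128$, but Corollary~\ref{cor-14x14} gives $E(\Q(3^\infty))_\tor = \Z/14\Z\oplus\Z/14\Z$ for such curves, which contains no point of order $5$; hence $E(\Q(3^\infty))(7)=0$ as well. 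For the $3$-primary part, if $E(\Q(3^\infty))(3)$ is non-trivial then Lemma~\ref{lem-3tor} provides a rational $3$-isogeny with $E[3]\subseteq E(\Q(3^\infty))$, hence a rational $15$-isogeny; Lemma~\ref{lem-15isog} then restricts $E(\Q(3^\infty))_\tor$ to $\{\Z/6\Z\oplus\Z/6\Z,\, \Z/6\Z\oplus\Z/30\Z\}$, and divisibility by $5$ selects $\Z/6\Z\oplus\Z/30\Z$. Otherwise, with $E(\Q(3^\infty))(3)=0$, the goal reduces to showing the $2$-primary component equals $\Z/2\Z\oplus\Z/2\Z$, yielding $E(\Q(3^\infty))_\tor = \Z/2\Z\oplus\Z/10\Z$.

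Assume for contradiction that $E(\Q(3^\infty))$ contains a point of order $4$. By Lemma~\ref{lem-j-k_isog} applied with $p=2$, either (a)~$E$ admits a rational $2^{j-k}$-isogeny with $j-k\geq 1$, or (b)~$E[4]\subseteq E(\Q(3^\infty))$. Combining a rational $2^{j-k}$-isogeny with the rational $5$-isogeny produces a rational $5\cdot 2^{j-k}$-isogeny, and Theorem~\ref{thm-isog} forces $j-k=1$; thus in case (a), $E$ admits a rational $10$-isogeny. Case (b) places $\im\rho_{E,4}$ in one of the finitely many subgroups of $\GL_2(\Z/4\Z)$ of generalized $S_3$-type, each of which combines with the Borel condition on $\im\rho_{E,5}$ to define a modular curve of level dividing $20$.

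The main obstacle is verifying that neither configuration actually arises. For case (b), I would enumerate the relevant subgroups of $\GL_2(\Z/20\Z)$ (as in the analysis leading to Lemma~\ref{lem-9x9}), compute the associated modular curves, and show that none has a non-cuspidal rational point. For case (a), I would split on whether $E(\Q)[2]$ is trivial: if it is not, Lemma~\ref{lem-2primary} identifies $E(\Q(3^\infty))(2)$ with $E(\Q(2^\infty))(2)$, and combining Theorem~\ref{thm-fujita} with the genus-$0$ parameterization of $X_0(10)$ reduces the question to finitely many cases; if $E(\Q)[2]$ is trivial, Lemma~\ref{lem-full4tor} constrains $j(E)$, and intersecting that parameterization with the standard $X_0(5)$ parameterization yields an auxiliary curve on which a descent locates all rational points. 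In each subcase the hypothetical point of order $4$ turns out to be incompatible with the simultaneous existence of a rational $5$-isogeny and the vanishing of $E(\Q(3^\infty))(3)$, completing the proof.
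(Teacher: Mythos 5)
Your reduction is sound and, in outline, follows the paper's own route: you correctly get $E(\Q(3^\infty))(5)\simeq\Z/5\Z$ and a rational $5$-isogeny from Lemma~\ref{lem-5tor}, kill the $7$- and $13$-primary parts (your direct argument via forbidden $35$- and $65$-isogenies plus Corollary~\ref{cor-14x14} is a valid substitute for the paper's appeal to the preceding subsections), and correctly funnel the case of non-trivial $3$-primary torsion through a rational $15$-isogeny and Lemma~\ref{lem-15isog} to get $\Z/6\Z\oplus\Z/30\Z$. The remaining task is exactly the one you identify: show the $2$-primary part is $E[2]$ when the $3$-part is trivial.

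The gap is that this last step is announced rather than proved. Your final paragraph is a plan (``I would enumerate\dots'', ``I would split\dots'', ``turns out to be incompatible''), and the decisive verifications are never carried out. Concretely: (i) when $E(\Q)[2]$ is non-trivial, no enumeration or $X_0(10)$ parameterization is needed --- Lemma~\ref{lem-2primary} gives $E(\Q(3^\infty))(2)=E(\Q(2^\infty))(2)$, the $5$-isogeny kernel lives in a degree-$\le 2$ extension and hence in $\Q(2^\infty)$, and Theorem~\ref{thm-fujita} leaves $\Z/2\Z\oplus\Z/10\Z$ as the only admissible group, so there is no point of order $4$ at all; note also that your case~(a) cannot occur with $E(\Q)[2]$ trivial, since Lemma~\ref{lem-2primary} then forces the $2$-primary part to be $E[2]$ or $E[4]$. (ii) The genuinely non-trivial case is $E(\Q)[2]$ trivial with $E[4]\subseteq E(\Q(3^\infty))$, and here your proposed $\GL_2(\Z/20\Z)$ enumeration is heavier than necessary and, more importantly, its outcome is asserted without evidence. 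The paper closes this case by a short argument you do not supply: Lemma~\ref{lem-full4tor} forces $-\Delta(E)$ to be a rational square, while the $X_0(5)$ parameterization $j(t)=(t^2+10t+5)^3/t$ gives $\Delta(t)\equiv t(t^2+22t+125)$ in $\Q^\times/\Q^{\times 2}$ (neither $0$ nor $1728$ occurs), so a counterexample would yield a rational point outside the $2$-torsion on $E_\Delta\colon y^2=x(x^2-22x+125)$; one then checks $E_\Delta(\Q)\simeq\Z/2\Z$. Without this (or an equivalent completed computation), the claim that $\Z/4\Z\oplus\Z/20\Z$ and $\Z/4\Z\oplus\Z/4\Z\oplus\Z/5\Z$ do not occur remains unproved, and the proposition is not established.
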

\begin{proof}
As noted above, the results of the previous two subsections imply that $E(\Q(3^\infty))_\tor$ is not divisible by 7 or 13.
Lemma \ref{lem-5tor} implies that $E$ admits a rational 5-isogeny, and if $E(\Q(3^\infty))(3)$ is non-trivial, then $E$ also admits a rational $3$-isogeny, by Lemma \ref{lem-3tor}, in which case it falls into the case covered by Lemma \ref{lem-15isog}.
We know that $E(\Q(3^\infty))(5)\simeq \Z/5\Z$, by Lemma~\ref{lem-5tor}, thus it remains only to consider $E(\Q(3^\infty))(2)$ when $E(\Q(3^\infty))(p)$ is trivial for $p=3,7,13$.

We first suppose that $E(\Q)[2]$ is non-trivial.
Then $E(\Q(3^\infty))(2)=E(\Q(2^\infty))(2)$, by Lemma \ref{lem-2primary}.
Lemma \ref{lem-5tor} implies that $E(\Q(3^\infty))(5)=E(\Q(2^\infty))(5)$, since $E$ must admit a rational 5-isogeny whose kernel generates and extension of degree at most 2, hence a subfield of $\Q(2^\infty)$.
Theorem \ref{thm-fujita} then implies $E(\Q(3^\infty))_\tor=E(\Q(2^\infty))_\tor\simeq \Z/2\Z\oplus\Z/10\Z$.

We now suppose that $E(\Q)[2]$ is trivial.
Then $E(\Q(2^\infty))$ is trivial and $E(\Q(3^\infty))=E[2]$ or $E[4]$, by Lemma \ref{lem-2primary}.
Lemma \ref{lem-full4tor} implies that the latter holds only when $-\Delta(E)$ is a rational square.
We claim that this cannot occur.
From \cite[Table 3]{lozano1}, we see that since $E$ admits a rational 5-isogeny, its $j$-invariant must lie in the image of the rational map
\[
j(t) = \frac{(t^2+10t+5)^3}{t}.
\]
Neither $0$ nor $1728$ lie in the image of this map, so by \eqref{eq-dsq}, the discriminant $\Delta(t)$ of an elliptic curve over $\Q$ with $j$-invariant $j(t)$ must satisfy
\[
\Delta(t) \equiv (j(t)-1728)^3 \equiv t(t^2 + 22t + 125)\quad (\text{in }\Q^\times/\Q^{\times 2}),
\]
with $t\ne 0$.
Finding $t\in \Q^\times$ for which $-\Delta(t)$ is a square is equivalent to finding rational points $P$ on the elliptic curve
\[
E_\Delta\colon y^2 = x(x^2 - 22x + 125)
\]
that do not lie in $E_\Delta(\Q)[2]$.
But we find that $E_\Delta(\Q)\simeq\Z/2\Z$, so no such $P$ exist.
Thus we must have $E(\Q(3^\infty))(2)=E[2]$, and therefore $E(\Q(3^\infty))_\tor\simeq \Z/2\Z\oplus\Z/10\Z$.
\end{proof}

\subsection{When only 2 and 3 divide \texorpdfstring{$\#E(\Q(3^\infty))$}{\#\textit{E}(Q(3*))}}

We now consider the case where $\#E(\Q(3^\infty))_\tor$ is divisible by 3 but not by 5, 7, or 13.
Lemmas~\ref{lem-2tor} and ~\ref{lem-3tor} imply $E(\Q(3^\infty))[6]=E[6]$, thus if $E(\Q(3^\infty))$ does not cannot contain any points of order $24$ or $36$, then Theorem~\ref{thm-upperbound} implies that $E(\Q(3^\infty))_\tor$ must be isomorphic to one of the five groups
\begin{equation}\label{eq-3list}
\Z/6\Z\oplus \Z/6\Z,\ \ \Z/6\Z\oplus \Z/12\Z,\ \ \Z/6\Z\oplus \Z/18\Z,\ \ \Z/12\Z \oplus \Z/12\Z,\ \ \Z/18\Z \oplus \Z/18\Z.
\end{equation}
As shown by the examples in Remark \ref{rem-examples}, these cases all occur for some $E/\Q$, so it suffices to show that $E(\Q(3^\infty))$ cannot contain any points of order 24 or 36.

\begin{proposition}\label{prop-no24}
Let $E/\Q$ be an elliptic curve.  There are no points of order $24$ in $E(\Q(3^\infty))$.
\end{proposition}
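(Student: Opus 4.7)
My plan is to reduce the claim to showing that $E(\Q(3^\infty))$ cannot simultaneously contain a point of order $8$ and a point of order $3$. This reduction is immediate from the Chinese Remainder Theorem: any point $P_{24}$ of order $24$ satisfies $P_{24}=9P_{24}+16P_{24}$, where $9P_{24}$ has order $8$ and $16P_{24}$ has order $3$, while conversely the sum of a point of order $8$ with a point of order $3$ has order $24$.

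Assume, for contradiction, that $E(\Q(3^\infty))$ contains both a point of order $8$ and a point of order $3$. By Lemma~\ref{lem-3tor}, the existence of the latter forces $E[3]\subseteq E(\Q(3^\infty))$, so that $\im\rho_{E,3}$ is conjugate to a subgroup of the Borel of $\GL_2(\F_3)$ and $E$ admits a rational $3$-isogeny. By Lemma~\ref{lem-2primary}, the existence of the former forces $E(\Q)[2]\ne\{O\}$ (otherwise $E(\Q(3^\infty))(2)\in\{E[2],E[4]\}$ has no point of order $8$) and then $E(\Q(3^\infty))(2)=E(\Q(2^\infty))(2)$, so the point of order~$8$ lies in $E(\Q(2^\infty))$. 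In particular $E$ admits a rational $2$-isogeny, and together with the rational $3$-isogeny, a rational $6$-isogeny.

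I would then emulate the modular-curve strategy used in Lemma~\ref{lem-9x27} and Proposition~\ref{prop-no27} at level~$24$. Enumerate in Magma the subgroups $G\subseteq\GL_2(\Z/24\Z)$ satisfying (i) surjective determinant and containing an element of trace $0$ and determinant $-1$; (ii) projection to $\GL_2(\F_3)$ contained in the Borel; and (iii) the existence of a normal subgroup $N\triangleleft G$ such that $G/N$ is of generalized $S_3$-type, $N$ is contained in the kernel of the mod-$3$ projection (so that the quotient fixes $E[3]$), and $N$ fixes a point of order $8$ in the $2$-part. Since $\gcd(8,3)=1$, the Chinese Remainder Theorem identifies $\GL_2(\Z/24\Z)\simeq\GL_2(\Z/8\Z)\times\GL_2(\F_3)$, so each such $G$ is a fibre product of its mod-$8$ and mod-$3$ projections: the mod-$3$ projection lies in the Borel (a short list up to conjugacy), and by Theorem~\ref{thm-fujita} the mod-$8$ projection must yield one of the five Fujita cases with $E(\Q(2^\infty))(2)$ containing a point of order $8$, which are cut out by the Rouse--Zureick-Brown database~\cite{RZB}.

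For each maximal candidate $G$ I would construct an explicit model of $X_G$ as a fibre product over the $j$-line of a $2$-power level modular curve from \cite{RZB} with $X_0(3)$, and show that $X_G(\Q)$ consists only of cusps. Rank-$0$ Jacobians yield this by direct torsion computation; higher-rank cases would be handled by Chabauty's method or by descending to a suitable quotient curve, exactly as was done for the group $H_3$ in Proposition~\ref{prop-no27}. Any surviving non-cuspidal rational point would be checked directly using the algorithm of~\S\ref{sec:algorithm}. The main obstacle is the level-$24$ modular-curve computation itself: the enumeration is larger than at level~$27$ because of the $2$-$3$ entanglement and the presence of several Fujita cases on the $2$-part, and constructing and analyzing the resulting models requires non-trivial bookkeeping. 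I expect that after CRT splitting and the Fujita constraint the number of surviving cases is manageable, and that each resulting modular curve contributes only cuspidal rational points, in parallel with the treatment in Proposition~\ref{prop-no27}.
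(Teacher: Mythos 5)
Your reduction and your first paragraph of deductions are exactly right, and they match the paper's opening moves: a point of order $24$ yields points of order $3$ and $8$; Lemma~\ref{lem-3tor} gives a rational $3$-isogeny; and Lemma~\ref{lem-2primary} forces $E(\Q)[2]$ to be non-trivial and places the point of order $8$ inside $E(\Q(2^\infty))$. But at that moment you already hold every ingredient needed to finish in one line, and instead you launch an unexecuted level-$24$ modular-curve computation. The missing observation is Lemma~\ref{lem-isog_gal}: the kernel of the rational $3$-isogeny generates an extension whose Galois group embeds in $(\Z/3\Z)^\times$, hence a field of degree at most $2$, which lies in $\Q(2^\infty)$. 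So $E(\Q(2^\infty))$ contains a point of order $3$ as well as your point of order $8$, hence a point of order $24$ --- and no group in Theorem~\ref{thm-fujita} admits one. Contradiction, done.

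As written, your argument is a genuine gap rather than a complete alternative: the entire burden rests on the claim that, after CRT splitting and the Fujita constraint, every surviving subgroup $G\subseteq\GL_2(\Z/24\Z)$ yields a modular curve $X_G$ whose rational points are all cuspidal, and you only say you \emph{expect} this to hold. That expectation is exactly the theorem to be proved, so nothing has been established. The strategy is not wrong in principle --- an enumeration at level $24$ would presumably succeed, since the statement is true --- but it trades a four-line argument for a substantial computation with $2$-$3$ entanglement, multiple maximal subgroups, and potentially higher-genus fiber products, none of which you carry out. If you want to salvage the modular-curve route you must actually list the maximal $G$, build the models, and determine their rational points; but the far better fix is simply to note that the $3$-torsion point is already defined over $\Q(2^\infty)$ and invoke Theorem~\ref{thm-fujita} directly.
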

\begin{proof}
Suppose $E(\Q(3^\infty))$ contains a point of order 24; then it contains both a point of order 3 and a point of order 8.
Lemma~\ref{lem-3tor} implies that $E$ admits a rational 3-isogeny, and the points in the kernel of this 3-isogeny are defined over a quadratic extension (by Lemma~\ref{lem-isog_gal}), so $E(\Q(2^\infty))$ contains a point of order 3.
Lemma~\ref{lem-2primary} implies that $E(\Q)[2]$ is non-trivial and  $E(\Q(3^\infty))(2)=E(\Q(2^\infty))(2)$, so $E(\Q(2^\infty))$ contains a point of order~8.
But then $E(\Q(2^\infty))$ contains a point of order 24, which contradicts Theorem \ref{thm-fujita}.
\end{proof}

In order to rule out a point of order 36 in $E(\Q(3^\infty))$ we require the following lemmas.

\begin{lemma}\label{lem-4isog}
Let $E/\Q$ be an elliptic curve. If $E(\Q)$ contains a point of order $2$, and $E(\Q(3^\infty))$ contains a point of order $4$, then either $E(\Q)[2]=E[2]$ or $E$ admits a rational $4$-isogeny.
\end{lemma}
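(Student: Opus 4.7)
The plan is to argue by contradiction: assume $E(\Q)[2]=\langle P\rangle$ has order $2$ (not $4$) and pick $T\in E[2]\setminus\{0,P\}$, so $E[2]=\{0,P,T,P+T\}$ and the nontrivial Galois action on $E[2]$ swaps $T\leftrightarrow P+T$. Given $Q\in E(\Q(3^\infty))$ of order $4$, the goal is to produce a Galois-stable cyclic subgroup of $E[4]$ of order $4$.

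First I would reduce to the case $2Q=P$. If instead $2Q\in\{T,P+T\}$, pick $\sigma\in\Gal(\Qbar/\Q)$ with $\sigma(T)=P+T$ and replace $Q$ by $Q+\sigma(Q)$: this lies in $E(\Q(3^\infty))$ because $\Q(3^\infty)/\Q$ is Galois, and its double is $2Q+\sigma(2Q)=T+(P+T)=P$, so it has order $4$. Assuming $2Q=P$, the identity $2(\sigma(Q)-Q)=\sigma(P)-P=0$ combined with $Q+P=-Q$ forces $\sigma(Q)\in\{\pm Q,\pm Q+T\}$ for every $\sigma\in\Gal(\Qbar/\Q)$.

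If $\sigma(Q)\in\{\pm Q\}$ for every $\sigma$, then $\langle Q\rangle$ is Galois-stable and $E\to E/\langle Q\rangle$ is the desired rational $4$-isogeny. Otherwise some $\sigma_1$ sends $Q$ to $\pm Q+T$, and I would derive a contradiction with Theorem~\ref{thm-galq3} by exhibiting an element of order $4$ in $\Gal(\widehat{\Q(Q)}/\Q)$: the Galois closure $\widehat{\Q(Q)}=\Q(Q,T)$ lies in $\Q(3^\infty)$, so its Galois group must be of generalized $S_3$-type, which has exponent dividing $6$ and hence no element of order $4$.

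The order-$4$ element is produced by a short case analysis on $\sigma_1(T)$. If $\sigma_1(T)=P+T$, a direct computation using $2T=0$ and $Q+P=-Q$ gives $\sigma_1^2(Q)=-Q$ and $\sigma_1^2(T)=T$, so $\sigma_1|_{\Q(Q,T)}$ already has order $4$. If $\sigma_1(T)=T$, I would pick any $\sigma_2$ with $\sigma_2(T)=P+T$: should $\sigma_2(Q)\in\{\pm Q+T\}$ the first subcase applies to $\sigma_2$, and should $\sigma_2(Q)\in\{\pm Q\}$ then $\tau:=\sigma_2\sigma_1$ satisfies $\tau(T)=P+T$ and $\tau(Q)=\pm\sigma_2(Q)+P+T\in\{\pm Q+T\}$ (using $Q+P=-Q$), once more reducing to the first subcase. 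The main technical obstacle is this bookkeeping, checking that every sign combination really yields the order-$4$ element; conceptually the engine is that the symmetries $Q\mapsto Q+T$ and $T\mapsto P+T$ do not commute, forcing the forbidden order-$4$ element whenever both occur in the Galois action.
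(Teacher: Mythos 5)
Your proof is correct, but it takes a genuinely different route from the paper's. The paper disposes of this lemma by a Magma enumeration of the subgroups $G$ of $\GL_2(\Z/4\Z)$ satisfying the relevant constraints (image in $\GL_2(\Z/2\Z)$ fixing a nonzero vector, and a normal subgroup $N$ with $G/N$ of generalized $S_3$-type acting trivially on a vector of order $4$), observing that every such $G$ either reduces to the trivial group modulo $2$ or stabilizes a cyclic subgroup of order $4$. You instead argue directly on torsion points: after normalizing so that $2Q=P$ is the rational point of order $2$, every conjugate of $Q$ lies in $\{\pm Q,\pm Q+T\}$, and if some conjugate involves $+T$ you manufacture an element of order $4$ in $\Gal(\Q(Q,T)/\Q)$, contradicting Theorem~\ref{thm-galq3} because $\Q(Q,T)$ is a Galois subfield of $\Q(3^\infty)$ (using $E[2]\subseteq E(\Q(3^\infty))$ from Lemma~\ref{lem-2tor}). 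I verified the sign bookkeeping: when $\sigma_1(T)=P+T$ one gets $\sigma_1^2(Q)=-Q$ and $\sigma_1^2(T)=T$ for either sign, and when $\sigma_1(T)=T$ both subcases for $\sigma_2$ do reduce to the first case via $\sigma_2$ or $\sigma_2\sigma_1$. The only point worth spelling out is that $\sigma_1^2(Q)=-Q\ne Q$ really does make $\sigma_1^2$ nontrivial on $\Q(Q)$, since $Q$ and $-Q$ share an $x$-coordinate but, as $Q\notin E[2]$, have distinct $y$-coordinates; hence $\sigma_1$ restricts to an element of order exactly $4$. What each approach buys: yours is elementary and self-contained, requires no computer verification, and exposes the actual obstruction (the symmetries $Q\mapsto Q+T$ and $T\mapsto P+T$ fail to commute and force a forbidden order-$4$ element), while the paper's enumeration avoids all case analysis and is uniform with the other $\GL_2(\Z/n\Z)$ computations used throughout the article.
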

\begin{proof}
It suffices to consider the possible images $G\subseteq \GL_2(\Z/4\Z)$ of $\rho_{E,4}$.
An enumeration of the subgroups $G$ of $\GL_2(\Z/4\Z)$ finds that whenever the image of $G$ in $\GL_2(\Z/2\Z)$ fixes a nonzero element of $\Z/2\Z\oplus\Z/2\Z$ (i.e. $E(\Q)$ contains a point of order 2) and $G$ contains a normal subgroup~$N$ for which $G/N$ is of generalized $S_3$-type and $N$ fixes an element of order~4 in $\Z/4\Z\oplus\Z/4\Z$ (i.e. $E(\Q(3^\infty))$ contains a point of order 4), then either the image of $G$ in $\GL_2(\Z/2\Z)$ is trivial ($E(\Q)[2]=E[2]$) or $G$ stabilizes a cyclic subgroup of $\Z/4\Z\oplus\Z/4\Z$ of order 4 ($E$ admits a rational 4-isogeny).
\end{proof}

\begin{lemma}\label{lem-9isog-no4}
Let $E/\Q$ be an elliptic curve that admits a rational $9$-isogeny.
Then $E(\Q(3^\infty))$ does not contain a point of order $4$.
\end{lemma}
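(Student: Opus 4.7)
\emph{Proof proposal.} Suppose for contradiction that $E/\Q$ admits a rational $9$-isogeny and that $E(\Q(3^\infty))$ contains a point of order~$4$. I would split on the structure of $E(\Q)[2]$ and reduce each case to a modular curve computation.

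If $E(\Q)[2]$ is non-trivial, Lemma~\ref{lem-4isog} gives two sub-cases. In the first, $E$ admits a rational $4$-isogeny; combining its kernel with the kernel of the $9$-isogeny (which have coprime orders) produces a Galois-stable cyclic subgroup of order~$36$, hence a rational $36$-isogeny, contradicting Theorem~\ref{thm-isog}. In the second, $E(\Q)[2]=E[2]$ together with the rational $9$-isogeny corresponds to a specific subgroup $H_1 \subseteq \GL_2(\Z/18\Z)$ whose image in $\GL_2(\Z/2\Z)$ is trivial and whose image in $\GL_2(\Z/9\Z)$ lies in a Borel subgroup. I would write down a model for the modular curve $X_{H_1}$, for instance via the fiber product over $X(1)$ of the parametrizations of $X(2)$ and $X_0(9)$, determine its rational points, and verify that none yields an elliptic curve with a point of order~$4$ in $E(\Q(3^\infty))$.

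If $E(\Q)[2]$ is trivial, then Lemma~\ref{lem-2primary} together with the existence of a point of order~$4$ in $E(\Q(3^\infty))$ forces $E(\Q(3^\infty))(2)=E[4]$. By Lemma~\ref{lem-full4tor}, $-\Delta(E)$ is a square in $\Q$ and $j(E)=j_1(s)$ for the explicit rational map $j_1$ given there, while the rational $9$-isogeny means that $j(E)=j_9(t)$ for the standard $X_0(9)\to X(1)$ parametrization $j_9$. The system $j_1(s)=j_9(t)$ cuts out an affine curve $C$ in the $(s,t)$-plane; I would find a smooth projective model, compute its genus, and determine $C(\Q)$, aiming to show that every rational point corresponds to a cusp or to $j(E)\in\{0,1728\}$, which can be handled separately by checking finitely many curves.

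\emph{Main obstacle.} The principal difficulty is the analysis of the modular curves appearing in Case~$2$ and the sub-case $E(\Q)[2]=E[2]$ of Case~$1$. If the curve $C$ in Case~$2$ has geometric genus at least~$2$, determining $C(\Q)$ will likely require a $2$-descent on its Jacobian followed by Chabauty's method, mirroring the argument in the proof of Proposition~\ref{prop-no27}. Identifying the correct subgroup of $\GL_2(\Z/18\Z)$ (respectively of $\GL_2(\Z/36\Z)$) and finding tractable equations for its modular curve are the key technical steps.
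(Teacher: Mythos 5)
Your case division and the supporting lemmas match the paper's proof exactly, and both of the computations you defer would in fact succeed; but in each of the two places where you propose a modular-curve computation the paper has a substantially shorter route. For the sub-case $E(\Q)[2]=E[2]$ you do not need a model for $X_{H_1}$ at all: a curve with full rational $2$-torsion is $2$-isogenous to a curve admitting a rational cyclic $4$-isogeny, and since the rational $9$-isogeny transfers across that $2$-isogeny, the isogenous curve would admit a rational $36$-isogeny, contradicting Theorem~\ref{thm-isog} --- the same mechanism you already use in your first sub-case, so no new curve need be analyzed. For the case of trivial $E(\Q)[2]$, the paper does not form the fiber product $j_1(s)=j_9(t)$ (which has large degree over the $j$-line and, as you anticipate, would likely require a descent plus Chabauty); it uses only the weaker consequence of Lemma~\ref{lem-full4tor} that $-\Delta(E)$ is a rational square, combined with $\Delta(E)\equiv j(E)-1728$ in $\Q^\times/\Q^{\times 2}$ from \eqref{eq-dsq}. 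Pulling that square class back along the $X_0(9)$ parametrization $j(t)=t^3(t^3-24)^3/(t^3-27)$ reduces the question to finding suitable rational points on the rank-zero elliptic curve $y^2=(x+3)(x^2-3x+9)$, of which there are none; this is far cheaper than your planar curve $C$. One caution about your plan to ``handle $j\in\{0,1728\}$ by checking finitely many curves'': Proposition~\ref{prop-twist} does not apply to $j=1728$, and by Remark~\ref{rem-twist} one of the two torsion structures occurring there, namely $\Z/4\Z\oplus\Z/4\Z$, \emph{does} contain a point of order $4$; you are rescued only because $1728$ is not in the image of the $X_0(9)$ map, a fact you should state explicitly. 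For $j=0$ a single check (e.g.\ on the curve \texttt{27a3}, which has $E(\Q(3^\infty))_\tor\simeq\Z/18\Z\oplus\Z/18\Z$) suffices by Proposition~\ref{prop-twist}.
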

\begin{proof}
If $E(\Q)[2]=E[2]$ then $E$ is isogenous to an elliptic curve that admits a rational 4-isogeny and a rational 9-isogeny, hence a rational 36-isogeny, which is ruled out by Theorem \ref{thm-isog}.
If $E(\Q)[2]$ has order $2$ then $E(\Q(3^\infty))$ cannot contain a point of order~4, because $E$ would then admit a rational 4-isogeny, by Lemma \ref{lem-4isog}, hence a rational 36-isogeny, which is again ruled out by Theorem \ref{thm-isog}.

We are thus left to consider the possibility that $E(\Q)[2]$ is trivial and $E(\Q(3^\infty))$ has a point of order 4,
in which case Lemma \ref{lem-2primary} implies $E(\Q(3^\infty))[4]=E[4]$, and Lemma~\ref{lem-full4tor} implies that $-\Delta(E)$ is a square.
We can assume $j(E)\ne 0$ because a direct computation shows that for the curve \href{http://www.lmfdb.org/EllipticCurve/Q/27a3}{\texttt{27a3}} with $j(E)=0$ we have $E(\Q(3^\infty))_\tor\simeq \Z/18\Z\oplus\Z/18\Z$, which does not contain a point of order 4.
Proposition~\ref{prop-twist} implies that this is true for every $E/\Q$ with $j(E)=0$.

From \cite[Table 3]{lozano1} we see that $j(E)$ must lie in the image of the rational map
\[
j(t)=\frac{t^3(t^3-24)^3}{t^3-27}.
\]
Having ruled out $j(E)=0$, we can assume $j(t)\ne 0$ (so $t\ne 0$), and 1728 does not lie in the image of $j(t)$, so by \eqref{eq-dsq}, for any $t\ne 0,3$ the discriminant $\Delta(t)$ of an elliptic curve with $j$-invariant $j(t)$
satisfies
\[
\Delta(t)\equiv(j(t)-1728)^3\equiv (t-3)(t^2+3t+9)\quad (\text{in } \Q^\times/\Q^{\times 2}).
\]
To see whether $-\Delta(t)$ can be square when $t\ne 0,3$, we search for nonzero rational points $P$ with $x(P)\neq 0,3$ on the elliptic curve
\[
E_\Delta\colon y^2=(x+3)(x^2-3x+9).
\]
We find that $E_\Delta(\Q) \simeq \Z/2\Z$, and the nonzero rational point has $x$-coordinate $3$.
Thus no such $P$ exist and the lemma follows.
\end{proof}

\begin{lemma}\label{lem-gal9image}
Suppose that $E/\Q$ admits just one rational $3$-isogeny and no rational $9$-isogenies, and that $E(\Q(3^\infty))$ contains a point of order $9$.  Then
\[
j(E)=\frac{(t+3)(t^2-3t+9)(t^3+3)^3}{t^3}
\]
for some $t\in \Q^\times$.
\end{lemma}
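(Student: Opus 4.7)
The plan is to translate the three hypotheses into conditions on $G:=\im \rho_{E,9}\subseteq \GL_2(\Z/9\Z)$, argue that any such $G$ is conjugate (in $\GL_2(\Z/9\Z)$) to a subgroup of the group $H_2$ of Lemma~\ref{lem-3x9}, and then read off the stated formula from the rational $j$-map of $X_{H_2}$ provided there. Having exactly one rational $3$-isogeny means $G\bmod 3$ lies in a Borel subgroup of $\GL_2(\F_3)$ but not in a split Cartan; having no rational $9$-isogeny means $G$ stabilizes no cyclic subgroup of order $9$ in $(\Z/9\Z)^2$; and the existence of a point of order $9$ in $E(\Q(3^\infty))$, combined with Proposition~\ref{prop-no27} and Lemma~\ref{lem-3tor}, forces $E(\Q(3^\infty))(3)$ to be isomorphic to either $\Z/3\Z\oplus\Z/9\Z$ or $\Z/9\Z\oplus\Z/9\Z$.

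I would then handle these two subcases using Lemmas~\ref{lem-3x9} and~\ref{lem-9x9}. In the $\Z/3\Z\oplus\Z/9\Z$ subcase, Lemma~\ref{lem-3x9} confines $G$ (up to conjugacy) to a subgroup of $H_1$ or $H_2$; the upper-triangular group $H_1$ stabilizes the cyclic subgroup $\langle(1,0)\rangle$ of order $9$, which would give a rational $9$-isogeny, contradicting the hypothesis and leaving only $H_2$. In the $\Z/9\Z\oplus\Z/9\Z$ subcase, Lemma~\ref{lem-9x9} reduces $G$ to a conjugate subgroup of the group $H$ appearing in its case (ii), after the split-Cartan alternative (i) is killed by the one-$3$-isogeny hypothesis. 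I would then verify directly that $H\subseteq H_2$: the generator $\begin{pmatrix}1&2\\3&1\end{pmatrix}$ of $H$ is shared with $H_2$; the diagonal generators $\begin{pmatrix}1&0\\0&8\end{pmatrix}$ and $\begin{pmatrix}2&0\\0&2\end{pmatrix}$ lie in the full split Cartan of $\GL_2(\Z/9\Z)$, which is entirely contained in $H_2$ because $\begin{pmatrix}2&0\\0&1\end{pmatrix}$ and $\begin{pmatrix}2&0\\0&2\end{pmatrix}$ generate it; and the remaining generator $\begin{pmatrix}1&3\\0&1\end{pmatrix}$ equals the word $B^{-1}ABA^{-1}$ for $A=\begin{pmatrix}1&2\\3&1\end{pmatrix}\in H_2$ and $B=\begin{pmatrix}2&0\\0&1\end{pmatrix}^2=\begin{pmatrix}4&0\\0&1\end{pmatrix}\in H_2$. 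Hence $G$ is conjugate to a subgroup of $H_2$ in either subcase.

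It follows that $E$ gives a non-cuspidal rational point on the genus-zero modular curve $X_{H_2}$, whose rational $j$-map is the $j_2(t)=(t+3)(t^2-3t+9)(t^3+3)^3/t^3$ recorded in Lemma~\ref{lem-3x9} (and taken from \cite{SZ}); the fiber at $t=0$ is a cusp, so $t\in \Q^\times$ and the stated formula for $j(E)$ follows. The main obstacle I anticipate is the explicit verification that $H$ of Lemma~\ref{lem-9x9} is already contained in $H_2$ of Lemma~\ref{lem-3x9}. In spirit this is a short Magma check over $\GL_2(\Z/9\Z)$, but as sketched it can be done by hand by manipulating a handful of generators; everything else reduces routinely to previously established classifications.
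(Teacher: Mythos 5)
Your proposal is correct, and it takes a genuinely different route from the paper. The paper proves this lemma by running a fresh enumeration of subgroups of $\GL_2(\Z/9\Z)$ tailored to the three hypotheses, which produces two maximal candidate images — one of genus $0$ (congruence subgroup $9\mathrm{C}^0$, giving the stated $j$-map) and one of genus $1$ ($9\mathrm{A}^1$) — and then must dispose of the genus-$1$ candidate by identifying its modular curve with the elliptic curve \texttt{27a3} via a rational $3$-isogeny from $X_0(3,9)$ and checking that its unique non-cuspidal rational point has $j=0$ (which is $j_2(-3)$, so it is absorbed into the stated parameterization anyway). You instead bypass both the new enumeration and the genus-$1$ curve entirely: you split on $E(\Q(3^\infty))(3)\simeq\Z/3\Z\oplus\Z/9\Z$ versus $\Z/9\Z\oplus\Z/9\Z$ (legitimate, by Lemma~\ref{lem-3tor} and Proposition~\ref{prop-no27}, both of which precede this lemma, so there is no circularity), invoke the already-established Lemmas~\ref{lem-3x9} and~\ref{lem-9x9}, kill the Borel alternative with the no-$9$-isogeny hypothesis and the split-Cartan alternative with the single-$3$-isogeny hypothesis, and reduce everything to the purely group-theoretic containment of the group $H$ of Lemma~\ref{lem-9x9}(ii) in the group $H_2$ of Lemma~\ref{lem-3x9}. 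I verified your matrix identity: with $A=\smallmat{1}{2}{3}{1}$ one has $A^{-1}\equiv\smallmat{7}{4}{6}{7}$, $ABA^{-1}\equiv\smallmat{4}{3}{0}{1}$, and $B^{-1}ABA^{-1}\equiv\smallmat{1}{3}{0}{1}\pmod 9$, and the two diagonal generators of $H_2$ do generate the full diagonal torus, so $H\subseteq H_2$ holds and all four generators of $H$ are accounted for. What your route buys is economy and reuse of prior work; what the paper's route buys is independence from the earlier classification lemmas and some extra information about exactly which images occur (in particular the $j=0$ exception attached to the genus-$1$ group), though that extra information is not needed for the statement being proved.
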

\begin{proof}
To determine the possible images of the mod-9 Galois representation of an elliptic curve $E/\Q$ satisfying the hypothesis of the proposition, we conducted a search similar to that used in the proof of Lemma \ref{lem-9x27}, using Magma to enumerate the subgroups of $\GL_2(\Z/9\Z)$ (up to conjugacy).
We find that $\rho_{E,9}(\Gal(\Q(E[9])/\Q)$ must be conjugate in $\GL_2(\Z/9\Z)$
to a subgroup of one of the groups
\begin{align*}
H_1&:=\left \langle
\begin{pmatrix} 1 & 0 \\ 0 & 2 \end{pmatrix},
\begin{pmatrix} 2 & 0 \\ 0 & 1 \end{pmatrix},
\begin{pmatrix} 1 & 3 \\ 0 & 1 \end{pmatrix},
\begin{pmatrix} 1 & 1 \\ 6 & 1 \end{pmatrix}
\right \rangle,\\
H_2&:=\left \langle
\begin{pmatrix} 1 & 0 \\ 0 & 2 \end{pmatrix},
\begin{pmatrix} 2 & 0 \\ 0 & 1 \end{pmatrix},
\begin{pmatrix} 1 & 3 \\ 0 & 1 \end{pmatrix},
\begin{pmatrix} 1 & 1 \\ 3 & 1 \end{pmatrix}
\right \rangle,
\end{align*}
whose intersections with $\SL_2(\Z/9\Z)$ yield the congruence subgroups \href{http://www.uncg.edu/mat/faculty/pauli/congruence/csg0.html#group9C0}{$\textrm{9C}^0$} and \href{http://www.uncg.edu/mat/faculty/pauli/congruence/csg1.html#group9A1}{$\textrm{9A}^1$}, of genus~0 and~1, respectively.
We will show that $H_2$ cannot occur unless $j(E)=0$, which we note is of the form required by the lemma (take $t=-3$); in fact, when $j(E)=0$ the image of $\rho_{E,9}$ is conjugate to a subgroup of~$H_1$ that may also lie in $H_2$ (this depends on $E$).

The intersection of $H_1$ and $H_2$ is the subgroup
\[
H_3 :=\left \langle
\begin{pmatrix} 1 & 0 \\ 0 & 2 \end{pmatrix},
\begin{pmatrix} 2 & 0 \\ 0 & 1 \end{pmatrix},
\begin{pmatrix} 1 & 3 \\ 0 & 1 \end{pmatrix}
\right \rangle,
\]
which is equal to the image of $\Gamma_0(3,9)$ in $\GL_2(\Z/9\Z)$; the modular curve $X_{H_3}=X_0(3,9)$ has genus~1 (it corresponds to the congruence subgroup \href{http://www.uncg.edu/mat/faculty/pauli/congruence/csg1.html#group9A1}{$\textrm{9A}^1$}), and parameterizes elliptic curves that admit a 3-isogeny and a 9-isogeny whose kernels intersect trivially.
The index-3 inclusion $H_3\subseteq H_2$ gives a degree-3 map $\varphi\colon X_{H_3}\to X_{H_2}$ of genus 1 curves, and a calculation using \cite[Lemma~3.4]{zywina2} shows that both curves have two rational cusps ($X_0(3,9)$ has six cusps in all, but only two are rational).
We may thus view the modular curves $X_{H_2}$ and $X_{H_3}$ as elliptic curves over $\Q$, and since $\varphi$ must map cusps to cusps, we can choose the origins so that $\varphi$ is an isogeny.
Both curves are defined over $\Q$ ($H_2$ and $H_3$ both have surjective determinant maps), so $\varphi$ is also defined over $\Q$; we thus have a rational 3-isogeny from $X_0(3,9)$ to $X_{H_2}$.

The elliptic curve corresponding to $X_{H_3}=X_0(3,9)$ has Cremona label \href{http://www.lmfdb.org/EllipticCurve/Q/27a1}{\texttt{27a1}}, and an examination of its isogeny class \href{http://www.lmfdb.org/EllipticCurve/Q/27/a}{\texttt{27a}} shows that $X_{H_2}$ is isomorphic to either \href{http://www.lmfdb.org/EllipticCurve/Q/27a2}{\texttt{27a2}} or \href{http://www.lmfdb.org/EllipticCurve/Q/27a3}{\texttt{27a3}}, and it must be the latter, since \href{http://www.lmfdb.org/EllipticCurve/Q/27a2}{\texttt{27a2}} has only one rational point but $X_{H_2}$ has two rational cusps.
The curve \href{http://www.lmfdb.org/EllipticCurve/Q/27a3}{\texttt{27a3}} isomorphic to $X_{H_2}$ has three rational points, so $X_{H_2}$ has exactly one non-cuspidal rational point, corresponding to the $\Qbar$-isomorphism class of an elliptic curve $E/\Q$ with $\im\rho_{E,9}\subseteq H_2$.
	
To determine this $\Qbar$-isomorphism class it suffice to find one representative.
The curve \href{http://www.lmfdb.org/EllipticCurve/Q/27a1}{\texttt{27a1}} itself admits a rational 3-isogeny and a rational 9-isogeny with distinct kernels and thus corresponds to a non-cuspidal rational point on $X_0(3,9)$, and its image under $\varphi$ is a non-cuspidal rational point on $X_{H_2}$.\footnote{This does not contradict the fact that \href{http://www.lmfdb.org/EllipticCurve/Q/27a1}{\texttt{27a1}} does not satisfy the hypothesis of Lemma~\ref{lem-gal9image}; elliptic curves whose mod-9 image is properly contained in $H_2$ may admit more than one rational 3-isogeny and/or a rational 9-isogeny.}
It follows that if $j(E)\ne 0$ then its mod-9 image must be conjugate to a subgroup of $H_1$.

From  the tables in \cite{SZ} we see that for the genus 0 curve $X_{H_1}$ the map to the $j$-line is given by
\[
j(t)=\frac{(t+3)(t^2-3t+9)(t^3+3)^3}{t^3},
\]
which is the function appearing in the statement of the lemma.
\end{proof}

\begin{example}\label{ex-gal9image}
The elliptic curve \href{http://www.lmfdb.org/EllipticCurve/Q/722a1}{\texttt{722a1}} satisfies the hypothesis of Lemma~\ref{lem-gal9image}:
it admits a single rational 3-isogeny but not a 9-isogeny, and has a point of order 9 over the compositum of the cubic fields of discriminant $361$ and $-1083$, hence over $\Q(3^\infty)$.
The image of $\rho_{E,9}$ is conjugate to~$G_1$, and we note that $j(E)=2375/8$ is of the form required by the lemma if we take $t=-2$.
\end{example}

\begin{lemma}\label{lem-two3isog-no4}
Let $E/\Q$ be an elliptic curve.
If $E$ admits more than one rational $3$-isogeny then $E(\Q(3^\infty))$ does not contain a point of order $4$.
\end{lemma}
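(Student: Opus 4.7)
The plan is to reduce the statement to the previously-proved Lemma~\ref{lem-9isog-no4} by showing that any elliptic curve $E/\Q$ admitting two distinct rational $3$-isogenies is rationally $3$-isogenous to an elliptic curve $E_1/\Q$ that admits a rational $9$-isogeny, after which the conclusion about $4$-torsion transfers back to $E$ via the $3$-isogeny (since $3$ and $4$ are coprime).

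First, I would let $\varphi_1\colon E \to E_1$ and $\varphi_2\colon E \to E_2$ denote two rational $3$-isogenies with distinct kernels $K_1, K_2 \subseteq E[3]$, and form the rational composition
\[
\psi := \varphi_2 \circ \hat\varphi_1 \colon E_1 \longrightarrow E_2,
\]
where $\hat\varphi_1$ is the dual of $\varphi_1$, also defined over $\Q$. The first key step is to check that $\psi$ is a rational $9$-isogeny. From $\varphi_1 \circ \hat\varphi_1 = [3]_{E_1}$ one sees $\hat\varphi_1(E_1[3]) \subseteq \ker\varphi_1 = K_1$, and a cardinality count gives equality. Since $K_1 \cap K_2 = 0$, an element $Q \in E_1[3]$ lies in $\ker\psi = \hat\varphi_1^{-1}(K_2)$ if and only if $\hat\varphi_1(Q) = 0$, so $\ker\psi \cap E_1[3]$ equals $\ker\hat\varphi_1$ and has order~$3$. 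Since $\ker\psi$ has order $\deg\psi = 9$, and any abelian group of order $9$ whose $3$-torsion has order $3$ is cyclic, $\ker\psi$ is cyclic of order $9$; Galois-stability is automatic as the preimage of the Galois-stable group $K_2$ under a rational isogeny. Hence $\psi$ is a rational $9$-isogeny from $E_1$.

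To finish, I would suppose for contradiction that $E(\Q(3^\infty))$ contains a point $P$ of order $4$. Because $\varphi_1$ has degree $3$, its kernel $K_1 \subseteq E[3]$ meets $E[4]$ trivially, so $\varphi_1$ restricts to a $\Gal(\Qbar/\Q)$-equivariant bijection $E[4] \to E_1[4]$. In particular $\varphi_1(P) \in E_1(\Q(3^\infty))$ has order $4$, contradicting Lemma~\ref{lem-9isog-no4} applied to $E_1$.

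The only subtle point is the cyclicity of $\ker\psi$ (as opposed to $\Z/3\Z \oplus \Z/3\Z$), but this is immediate from the disjointness of $K_1$ and $K_2$; every other step is formal.
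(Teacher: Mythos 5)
Your proof is correct and follows essentially the same route as the paper: reduce to Lemma~\ref{lem-9isog-no4} by passing to a $3$-isogenous curve $E_1$ that admits a rational $9$-isogeny, and transport the putative point of order $4$ along the $3$-isogeny, which is injective on $E[4]$. The only difference is that you spell out the construction of the $9$-isogeny $\psi=\varphi_2\circ\hat\varphi_1$ and the cyclicity of its kernel, whereas the paper cites this as a standard fact without proof.
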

\begin{proof}
If $E$ admits more than one rational 3-isogeny then it is related by a rational 3-isogeny $\varphi$ to an elliptic curve $E'/\Q$ that admits a rational $9$-isogeny.
The 3-isogeny $\varphi\colon E\to E'$ will map any point of order 4 in $E(\Q(3^\infty))$ to a point of order 4 in $E'(\Q(3^\infty))$, but no such point can exist, by Lemma \ref{lem-9isog-no4}.
\end{proof}

\begin{proposition}\label{prop-no36}
Let $E/\Q$ be an elliptic curve.
Then $E(\Q(3^\infty))$ contains no points of order~$36$.
\end{proposition}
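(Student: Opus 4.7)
The plan is to assume for contradiction that $E/\Q$ has a point of order $36$ in $E(\Q(3^\infty))$ and derive a contradiction by combining the earlier structural lemmas with a computation on low-genus modular curves. Since a point of order $36$ is equivalent to having both a point of order $4$ and a point of order $9$, both conditions must hold, and from each I would extract a separate restriction on $j(E)$.

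First I would use the $3$-primary structure. Since $E(\Q(3^\infty))(3)$ is non-trivial, Lemma~\ref{lem-3tor} gives a rational $3$-isogeny on $E$. If $E$ admits more than one rational $3$-isogeny, Lemma~\ref{lem-two3isog-no4} rules out a point of order $4$ in $E(\Q(3^\infty))$; and if $E$ admits a rational $9$-isogeny, Lemma~\ref{lem-9isog-no4} does the same. Hence $E$ must admit exactly one rational $3$-isogeny and no rational $9$-isogeny, and Lemma~\ref{lem-gal9image} pins $j(E)$ to the one-parameter family
\[
j(E) = \frac{(t+3)(t^2-3t+9)(t^3+3)^3}{t^3}, \qquad t \in \Q^\times.
\]

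Next I would use the point of order $4$ to get a second constraint, splitting on $E(\Q)[2]$. When $E(\Q)[2]=0$, Lemma~\ref{lem-2primary} forces $E(\Q(3^\infty))(2) = E[4]$, and then Lemma~\ref{lem-full4tor} gives
\[
j(E) = \frac{-4(s^2-3)^3(s^2-8s-11)}{(s+1)^4}, \qquad s \in \Q\setminus\{-1\}.
\]
When $E(\Q)[2]\ne 0$, Lemma~\ref{lem-4isog} gives either $E(\Q)[2]=E[2]$ or a rational $4$-isogeny. The latter, combined with the rational $3$-isogeny, yields a rational $12$-isogeny, so $j(E)$ lies in the image of the $j$-line map on the genus-zero curve $X_0(12)$; the former constrains the image of $\rho_{E,4}$ to a specific subgroup of $\GL_2(\Z/4\Z)$ whose associated modular curve is again rational, and this also gives a one-parameter rational restriction on $j(E)$.

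Finally, equating the Lemma~\ref{lem-gal9image} parameterization with each of the second parameterizations produces an affine plane curve $C/\Q$. The plan is to compute a smooth projective model of each $C$ in Magma, determine its (low) genus, carry out a $2$-descent on its Jacobian to bound the rank, and enumerate $C(\Q)$ directly (when the rank is $0$) or via Chabauty's method (otherwise), showing that every rational point on $C$ corresponds to a cusp or to the excluded value $j(E)=0$. For $j(E)=0$ one verifies directly, as in the proof of Lemma~\ref{lem-9isog-no4}, that $E(\Q(3^\infty))_\tor\simeq \Z/18\Z\oplus\Z/18\Z$, which has no point of order $36$. The main obstacle will be the rank and point computations for the Jacobians of the various $C$; these are carried out in the same style as the analogous steps in Lemma~\ref{lem-9isog-no4} and Proposition~\ref{prop-no27}, using the scripts in \cite{magmascripts}.
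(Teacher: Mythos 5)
Your skeleton matches the paper's: reduce to the case of exactly one rational $3$-isogeny and no rational $9$-isogeny via Lemmas~\ref{lem-3tor}, \ref{lem-9isog-no4}, and \ref{lem-two3isog-no4}, split on whether $E(\Q)[2]$ is trivial, extract two rational parameterizations of $j(E)$ in each case, and finish by finding all rational points on the resulting fiber-product curves. The logic is sound throughout, and your subcase analysis via Lemma~\ref{lem-4isog} when $E(\Q)[2]\neq 0$ is a legitimate (finer) alternative to the paper's use of the single $X_0(2)$ constraint there.

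The concrete problem is your insistence on using the Lemma~\ref{lem-gal9image} family in \emph{both} cases, in particular when $E(\Q)[2]$ is trivial. There you propose to intersect the degree-$12$ level-$9$ parameterization with the degree-$8$ level-$4$ parameterization from Lemma~\ref{lem-full4tor}. Since the levels $4$ and $9$ are coprime, the resulting modular curve has degree $8\cdot 12=96$ over the $j$-line, and its genus is roughly $9$ minus small correction terms --- certainly not ``low,'' and well beyond the range where ``compute a model, do a $2$-descent, run Chabauty'' is a routine plan (Chabauty also requires the Jacobian rank to be less than the genus, which you have no reason to expect to be verifiable here). The paper avoids this by pairing the heavy constraint at one prime with the lightest available constraint at the other: in the trivial-$2$-torsion case it intersects the Lemma~\ref{lem-full4tor} family with only the $X_0(3)$ family (degree $4$), landing on a genus~$1$ curve isomorphic to \texttt{48a3}; in the other case it pairs the level-$9$ family with only $X_0(2)$ (degree $3$), landing on a genus~$2$ hyperelliptic curve. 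A similar degree inflation affects your $E(\Q)[2]=E[2]$ subcase (the $X(2)$ family has degree $6$, giving degree $72$ against the level-$9$ family). So while nothing in your argument is wrong, the computation you defer to Magma is substantially harder than you indicate, and you should rebalance which parameterizations you intersect --- exactly one of the two constraints in each case needs to carry the level-$9$ or level-$4$ information; the other should be as coarse as possible.
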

\begin{proof}
Suppose for the sake of contradiction that $E(\Q(3^\infty))$ does contain a point of order~36.
It follows from Lemmas \ref{lem-3tor}, \ref{lem-9isog-no4} and \ref{lem-two3isog-no4} that $E$ admits exactly one rational $3$-isogeny and no rational $9$-isogenies.
We now consider two cases.

Let us first suppose that $E(\Q)[2]$ is trivial.
Since $\Q(3^\infty)$ contains a point of order 36, it contains a point of order 4, and Lemma \ref{lem-full4tor} implies that
\[
j(E) = \frac{-4(t^2-3)^3(t^2-8t-11)}{(t+1)^4},
\]
for some $t \in \Q \backslash \{-1\}$.
Since $E$ admits a rational $3$-isogeny, its $j$-invariant must also satisfy
\[
j(E) = \frac{(s+27)(s+3)^3}{s}
\]
for some $s\in \Q^\times$ (see \cite[Table 3]{lozano1}, for example).
The valid pairs $(t,s)$ lie on the (singular) curve
\[
C_1:-4s(t^2-3)^3(t^2 - 8t - 11)- (s+27)(s+3)^3(t+1)^4=0,
\]
which has genus 1 and the rational point $(0,-1)$.
Its normalization is isomorphic to the elliptic curve \href{http://www.lmfdb.org/EllipticCurve/Q/48a3}{\texttt{48a3}}, which has 8 rational points and is a smooth model for the modular curve $X_G$ obtained by taking the fiber product over $X(1)$ of the two maps above from the genus zero curves $X_H$ and $X_0(3)$ to $X(1)$; here $H$ is the group in the proof of Lemma~\ref{lem-full4tor} and $G$ is the intersection in $\GL_2(\Z/12\Z)$ of the inverse images of $H\subseteq\GL_2(\Z/4\Z)$ and the Borel group in $\GL_2(\Z/3\Z)$.
A calculation in Magma shows that $X_G$ has four rational cusps, and that the points
\[
 (-5,-36), (7,-81/4), (-5/4, -81/4), (-1/2 , -36) \in C_1(\Q),
\]
are valid solutions $(t,s)$ corresponding to the four non-cuspidal rational points on $X_G$.
These solutions yield two distinct $j$-invariants: $-35937/4$ and $109503/64$.
Taking the curves \href{http://www.lmfdb.org/EllipticCurve/Q/162a1}{\texttt{162a1}} and \href{http://www.lmfdb.org/EllipticCurve/Q/162d1}{\texttt{162d1}} as representatives of these $\Qbar$-isomorphism classes, we find that neither has a point of order~36 defined over $\Q(3^\infty)$, and by Proposition~\ref{prop-twist}, this applies to every $E/\Q$ in these two classes.

We now suppose that $E(\Q)[2]$ is non-trivial and proceed similarly.
Now $E$ has a rational point of order 2, so its $j$-invariant has the form
\[
j(E)=\frac{(s+256)^3}{s^2},
\]
for some $s\in \Q^\times$ (see \cite[Table 3.]{lozano1}, for example).
By Lemma \ref{lem-gal9image}, the $j$-invariant $j(E)$ also satisfies
\[
j(E)=\frac{(t+3)(t^2-3t+9)(t^3+3)^3}{t^3},
\]
for some $t\in \Q^\times$.
The possible solutions $(t,s)$ lie on the genus 2 curve
\[
C_2:(t+3)(t^2-3t+9)(t^3+3)^3s^2 - t^3(s+256)^3=0,
\]
which has the hyperelliptic model
\[
C_3:y^2 = x^6 - 34x^3 + 1.
\]
The Jacobian of $C_3$ has rank 0, and using Chabauty's method we find that
\[
C_3(\Q)=\left \{\pm \infty , (-1 , \pm 6), (0, \pm 1) \right \}.
\]
There are thus six rational points on the modular curve $X_G$ corresponding to the fiber product over $X(1)$ of the two rational maps from the genus zero curves $X_1(2)=X_0(2)$ and $X_{H_1}$, where $H_1$ is the group in the proof of \ref{lem-gal9image} and $G$ is the intersection in $\GL_2(\Z/18\Z)$ of the inverse images of the Borel group in $\GL_2(\Z/2\Z)$ and $H_1\subseteq \GL_2(\Z/9\Z)$.  A calculation in Magma shows that $X_G$ has four rational cusps, and that the points
\[
 (3,-16), (-3,-256) \in C_2(\Q)
\]
are valid solutions $(t,s)$ corresponding to the two non-cuspidal rational points on $X_G$, which yield the $j$-invariants $0$ and $54000$.
Taking the elliptic curves \href{http://www.lmfdb.org/EllipticCurve/Q/27a}{\texttt{27a1}} and \href{http://www.lmfdb.org/EllipticCurve/Q/36a2}{\texttt{36a2}} as representatives of these $\Qbar$-isomorphism classes, we find that neither has a point of order 36 defined over $\Q(3^\infty)$.
\end{proof}

\begin{corollary}\label{cor-3div}
Let $E/\Q$ be an elliptic curve.  If $3$ is the largest prime divisor of $\#E(\Q(3^\infty))_\tor$ then $E(\Q(3^\infty))_\tor$ is isomorphic to one of the five groups listed in \eqref{eq-3list}.
\end{corollary}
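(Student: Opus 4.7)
The plan is to assemble the structural constraints already established in this section into a short case analysis on the $2$-primary and $3$-primary components of $E(\Q(3^\infty))_\tor$. Under the hypothesis that no prime larger than $3$ divides $\#E(\Q(3^\infty))_\tor$, Proposition~\ref{prop-possible_primes} gives the decomposition
\[
E(\Q(3^\infty))_\tor = E(\Q(3^\infty))(2)\, \oplus\, E(\Q(3^\infty))(3).
\]
As a first step I would record the two ``floors'': Lemma~\ref{lem-2tor} forces $E[2]\subseteq E(\Q(3^\infty))(2)$, and, because $3$ divides $\#E(\Q(3^\infty))_\tor$, Lemma~\ref{lem-3tor} forces $E[3]\subseteq E(\Q(3^\infty))(3)$.

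Next I would establish the ``ceiling'' for each component. Theorem~\ref{thm-upperbound}, together with Lemma~\ref{lem-9x27} and Proposition~\ref{prop-no27}, shows that $E(\Q(3^\infty))(3)$ is one of $E[3]$, $\Z/3\Z\oplus\Z/9\Z$, or $E[9]$. Theorem~\ref{thm-upperbound} also shows that $E(\Q(3^\infty))(2)$ sits inside $\Z/8\Z\oplus\Z/16\Z$; but since $E(\Q(3^\infty))$ contains a point of order $3$, Proposition~\ref{prop-no24} rules out any element of order $8$ in the $2$-primary part, restricting $E(\Q(3^\infty))(2)$ to one of $E[2]$, $\Z/2\Z\oplus\Z/4\Z$, or $E[4]$.

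Finally I would invoke Proposition~\ref{prop-no36} to handle the interaction between the two components: the non-existence of an element of order $36$ means $E(\Q(3^\infty))$ cannot simultaneously contain a point of order $4$ and a point of order $9$. Thus whenever $E(\Q(3^\infty))(2)$ is strictly larger than $E[2]$, the $3$-primary component must collapse to exactly $E[3]$. Enumerating the remaining possibilities yields precisely the five groups of \eqref{eq-3list}: when $E(\Q(3^\infty))(2)=E[2]$ the three options $\Z/6\Z\oplus\Z/6\Z$, $\Z/6\Z\oplus\Z/18\Z$, $\Z/18\Z\oplus\Z/18\Z$ arise; when $E(\Q(3^\infty))(2)\in\{\Z/2\Z\oplus\Z/4\Z,\,E[4]\}$ the two options $\Z/6\Z\oplus\Z/12\Z$, $\Z/12\Z\oplus\Z/12\Z$ arise. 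There is no real obstacle at this stage: every nontrivial ingredient (the bound $T_{\rm max}$ and the exclusions of orders $24$, $27$, $36$) has already been proved, so the corollary is a purely combinatorial consequence of the preceding results.
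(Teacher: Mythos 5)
Your proposal is correct and follows essentially the same route as the paper: the paper's proof of this corollary simply points back to the argument at the start of the subsection, which combines Lemmas~\ref{lem-2tor} and~\ref{lem-3tor}, the bound from Theorem~\ref{thm-upperbound}, and the exclusions of points of order $24$ and $36$ from Propositions~\ref{prop-no24} and~\ref{prop-no36}, exactly as you do. Your write-up just makes the resulting case analysis on the $2$- and $3$-primary components explicit.
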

\begin{proof}
As argued at the start of this subsection, this now follows from Propositions~\ref{prop-no24} and~\ref{prop-no36}.
\end{proof}

\subsection{When only 2 divides \texorpdfstring{$\boldsymbol{\#E(\Q(3^\infty))_\tor}$}{\#\textit{E}(Q(3*))}}

If $\#E(\Q(3^\infty))$ is a power of $2$ then Lemmas~\ref{lem-2tor} and~\ref{lem-2primary} imply that
\[
E(\Q(3^\infty))\simeq\begin{cases}
\Z/2\Z\oplus \Z/2^j\Z & j=1,2,3,4,\text{ or}\\
\Z/4\Z\oplus \Z/2^j\Z & j=2,3,4,\text{ or}\\
\Z/8\Z\oplus \Z/8\Z. &
\end{cases}
\]
The examples listed in Remark~\ref{rem-examples} show that these cases all occur.
In conjunction with Propositions~\ref{prop-13div}, \ref{prop-7div}, \ref{prop-5div} and Corollary \ref{cor-3div}, this proves the first statement in Theorem~\ref{thm-main}.

\section{Explicit parameterizations for each torsion structure}\label{sec-param}

In this section we complete the proof of Theorem~\ref{thm-main} by giving an explicit description of the sets
\[
S_T:=\{j(E):E(\Q(3^\infty))_\tor\simeq T\},
\]
where $T$ ranges over the set $\mathcal{T}$ of 20 possible torsion structures for $E(\Q(3^\infty))$ determined in the previous section.
It follows from Proposition~\ref{prop-twist} that the sets $S_T$ partition $\Q\backslash \{1728\}$.
As noted in Remark~\ref{rem-twist}, the $j$-invariant 1728 lies in two of the sets $S_T$, namely, the sets for $T=\Z/2\Z\oplus\Z/2\Z$ and $T=\Z/4\Z\oplus\Z/4\Z$.

We determine the sets $S_T$ in terms of sets $F_T$ of (possibly constant) rationals functions $j(t)$ that parameterize the $j$-invariants $j(E)$ of elliptic curves $E/\Q$ for which $E(\Q(3^\infty))_\tor$ contains a subgroup isomorphic to $T$.
These appear in Table~\ref{table-param} on the next page, which lists a set $F_T$ of functions $j(t)$ for each $T\in\mathcal{T}$.
Let us partially order the set $\mathcal{T}$ by inclusion (so $T_1\le T_2$ whenever $T_1$ is isomorphic to a subgroup of $T_2$).

\begin{theorem}\label{thm-param}
Let $E/\Q$ be an elliptic curve with $j(E)\ne 1728$.
Let $\mathcal{T}(E)\subseteq \mathcal{T}$ be the set of groups $T$ for which $j(E)$ lies in the image of some $j(t)\in F_{T}$.
Then $\mathcal{T}(E)$ contains a unique maximal element $T(E)$, and it is isomorphic to $E(\Q(3^\infty))_\tor$;  equivalently, $j(E)\in S_T$ if and only if $T=T(E)$.
\end{theorem}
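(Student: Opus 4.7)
The plan is to first establish the following defining property of Table \ref{table-param}: for each $T \in \mathcal{T}$ and each $j \in \Q$ with $j \ne 1728$, the value $j$ lies in the image of some $j(t) \in F_T$ if and only if every elliptic curve $E/\Q$ with $j(E) = j$ satisfies $E(\Q(3^\infty))_\tor \supseteq T$. Once this is verified, the remainder of the theorem will follow almost formally.

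To verify the defining property I would work through the twenty torsion structures $T \in \mathcal{T}$ individually. For each $T$, let $n$ be a multiple of its exponent. By Theorems \ref{thm-galq3} and \ref{thm-galq3conv}, the condition $E(\Q(3^\infty))_\tor \supseteq T$ is equivalent to the existence of a normal subgroup $N \lhd \im \rho_{E,n}$ that acts trivially on some $\Z/n\Z$-submodule of $E[n]$ isomorphic to $T$ and whose quotient is of generalized $S_3$-type. Enumerating in Magma the subgroups $H \subseteq \GL_2(\Z/n\Z)$ satisfying this condition, together with surjective determinant and containing a complex-conjugation element, reduces the problem to computing the non-cuspidal rational points on the associated modular curves $X_H$. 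For each genus-zero $X_H$ with a rational point one records the resulting $j$-map in $F_T$; for positive-genus $X_H$ one enumerates its rational points by descent and Chabauty-style arguments in the spirit of the proofs of Lemma \ref{lem-9x27} and Proposition \ref{prop-no27}, and records the finitely many non-cuspidal $j$-invariants obtained as constant functions in $F_T$. Many of the required parameterizations are already in hand from \S\ref{sec-max} (see Lemmas \ref{lem-5tor}, \ref{lem-13tor}, \ref{lem-7tor}, \ref{lem-9x9}, \ref{lem-3x9}, and \ref{lem-full4tor}), and the four sporadic structures $\Z/4\Z\oplus\Z/28\Z$, $\Z/6\Z\oplus\Z/30\Z$, $\Z/6\Z\oplus\Z/42\Z$, and $\Z/14\Z\oplus\Z/14\Z$ were pinned down in the preceding section (Lemmas \ref{lem-21isog}, \ref{lem-14isog}, \ref{lem-15isog}, and Corollary \ref{cor-14x14}); the remaining cases will be handled by analogous computations recorded in \cite{magmascripts}.

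Granted the defining property of $F_T$, the rest of the proof is formal. Let $E/\Q$ satisfy $j(E) \ne 1728$ and set $T_0 := E(\Q(3^\infty))_\tor$. Proposition \ref{prop-twist} shows $T_0$ depends only on $j(E)$, and Theorem \ref{thm-main} gives $T_0 \in \mathcal{T}$. For any $T \in \mathcal{T}$, the defining property then yields $T \in \mathcal{T}(E)$ if and only if $T_0 \supseteq T$: the forward direction is immediate from the definition, while the reverse follows by combining the defining property for $T_0$ itself with the fact that $T_0$ contains a copy of each of its subgroups. Thus $\mathcal{T}(E) = \{T \in \mathcal{T} : T \subseteq T_0\}$, and since $T_0$ lies in this set and dominates every other element under the subgroup partial order, $T_0$ is the unique maximum, and hence the unique maximal element, of $\mathcal{T}(E)$. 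This yields $T(E) = T_0 \simeq E(\Q(3^\infty))_\tor$, as claimed.

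The principal obstacle will be the case-by-case verification of Table \ref{table-param}. While many entries come essentially for free from the $p$-primary analysis of \S\ref{sec-max} and the structural analysis of the preceding section, the composite torsion cases (e.g.\ $\Z/6\Z\oplus\Z/6\Z$, $\Z/6\Z\oplus\Z/12\Z$, $\Z/12\Z\oplus\Z/12\Z$) require constructing fiber products of the relevant $p$-local modular curves over $X(1)$ and determining their rational points; for positive-genus fiber products this in turn demands Chabauty-style computations on Jacobians analogous to those in the proof of Proposition \ref{prop-no27}. The entire enumeration is bounded in scope by the upper bound of Theorem \ref{thm-upperbound} and the prime-set restriction of Proposition \ref{prop-possible_primes}, and the resulting computations are carried out in \cite{magmascripts}.
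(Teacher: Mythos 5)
Your overall strategy is the same as the paper's: reduce the construction of Table~\ref{table-param} to an enumeration of maximal subgroups of $\GL_2(\Z/n\Z)$ subject to the surjective-determinant and complex-conjugation constraints together with a generalized-$S_3$-quotient condition, determine the rational points of the resulting modular curves (genus-zero parameterizations where available, descent and Chabauty-style arguments in positive genus), and then deduce the theorem by a short formal argument showing that $E(\Q(3^\infty))_\tor$ is the unique maximal element of $\mathcal{T}(E)$. That matches the paper's proof in both decomposition and technique.

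However, the ``defining property'' you propose to establish is false as stated, and the verification step you describe would fail. You assert that $j$ lies in the image of some $j(t)\in F_T$ \emph{if and only if} every $E$ with $j(E)=j$ satisfies $E(\Q(3^\infty))_\tor\supseteq T$. The reverse implication does not hold for the table in question: the paper's remark immediately following the theorem notes that for $E=$ \texttt{15a1} one has $E(\Q(3^\infty))_\tor\simeq\Z/8\Z\oplus\Z/8\Z$, which contains $\Z/2\Z\oplus\Z/8\Z$, yet $j(E)$ is not in the image of the unique entry for $T=\Z/2\Z\oplus\Z/8\Z$. The reason is that the groups $G$ underlying the table are taken maximal subject to the submodule fixed by the minimal normal subgroup $N$ (with $G/N$ of generalized $S_3$-type) being isomorphic to $T$ \emph{exactly}, not merely containing a copy of $T$; a curve whose torsion strictly contains $T$ need not have mod-$n$ image inside any of these $G$. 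Consequently your intermediate claim $\mathcal{T}(E)=\{T: T\subseteq T_0\}$ is also false. What is actually provable, and what the paper proves, is the asymmetric pair: (a) if $j(E)$ lies in the image of some $j(t)\in F_T$ then $E(\Q(3^\infty))_\tor$ contains a subgroup isomorphic to $T$; and (b) if $E(\Q(3^\infty))_\tor\simeq T$ then $j(E)$ lies in such an image. These two facts already give $T_0\in\mathcal{T}(E)$ and $T'\subseteq T_0$ for every $T'\in\mathcal{T}(E)$, hence the unique maximal element is $T_0$; so your final conclusion survives once the biconditional is weakened, but as written the property you set out to verify cannot be verified.
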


\begin{remark}
The set $\mathcal{T}(E)$ need not contain every $T\le T(E)$. The curve \href{http://www.lmfdb.org/EllipticCurve/Q/15a1}{\texttt{15a1}} is an example: $T(E)=\Z/8\Z\oplus\Z/8\Z$ but $j(E)$ is not in the image of the unique function $j(t)$ for $T=\Z/2\Z\oplus\Z/8\Z$.
\end{remark}

\begin{corollary}\label{cor-param}
Of the $20$ groups $T$ listed in Theorem \ref{thm-main}, the following $4$ arise as $E(\Q(3^\infty))_\tor$ for only a finite set of $\Qbar$-isomorphism classes of elliptic curves $E/\Q$:
\[
\Z/4\Z\times\Z/28\Z,\qquad \Z/6\Z\times\Z/30\Z,\qquad \Z/6\Z\times\Z/42\Z,\qquad\Z/14\Z\times\Z/14\Z.
\]
The remaining $16$ arise for infinitely many $\Qbar$-isomorphism classes of elliptic curves $E/\Q$.
\end{corollary}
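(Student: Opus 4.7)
The plan is to prove Theorem~\ref{thm-param} by establishing, for each $T\in\mathcal{T}$, the biconditional that $j(E)$ lies in the image of some $j(t)\in F_T$ if and only if $T$ is isomorphic to a subgroup of $E(\Q(3^\infty))_\tor$. Granting this biconditional, $E(\Q(3^\infty))_\tor$ itself lies in $\mathcal{T}(E)$ (take $T=E(\Q(3^\infty))_\tor$), and every other element of $\mathcal{T}(E)$ embeds into $E(\Q(3^\infty))_\tor$; hence $E(\Q(3^\infty))_\tor$ is the unique maximum $T(E)$, as claimed. Equivalently, $j(E)\in S_T$ precisely when $T=T(E)$, and Corollary~\ref{cor-param} follows by inspecting whether the associated $F_T$ in Table~\ref{table-param} contains a non-constant rational function (infinite case) or only constants (finite case).

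To produce $F_T$, I would use the primary decomposition $T=T(2)\oplus T(3)\oplus T(5)\oplus T(7)\oplus T(13)$ together with the results of Section~\ref{sec-max}. For each prime $p\in\{2,3,5,7,13\}$, the lemmas there translate the condition $E(\Q(3^\infty))(p)\supseteq T(p)$ into the condition that $\im\rho_{E,p^k}$ is conjugate to a subgroup of a specific $H_p\subseteq\GL_2(\Z/p^k\Z)$ of generalized $S_3$-type, as enumerated in Lemmas~\ref{lem-7tor}, \ref{lem-9x9}, \ref{lem-3x9}, \ref{lem-full4tor}, and via the isogeny parameterizations of \cite[Tables 3--4]{lozano1}. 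Each $H_p$ comes equipped with an explicit $j$-map on $X_{H_p}$ computed in previous sections. Taking the fiber product of the $X_{H_p}$ over $X(1)$ yields a modular curve $X_G$ for the intersection $G$ of the inverse images in $\GL_2(\Z/N\Z)$ corresponding to the combined mod-$N$ condition. When $X_G$ has genus zero the noncuspidal rational points are parameterized by a rational map $j(t)$, which is placed in $F_T$; when $X_G$ has positive genus, a descent plus a Chabauty-style calculation on an elliptic or hyperelliptic quotient, as carried out in Proposition~\ref{prop-no36}, or a direct lookup of a rational isogeny class, as in Lemmas~\ref{lem-21isog} and~\ref{lem-15isog}, gives a finite list of $j$-invariants.

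One direction of the biconditional is built into the construction: if $j(E)=j(t_0)$ with $j(t)\in F_T$, then by Proposition~\ref{prop-twist} (applied since $j(E)\ne 1728$) $E$ is a twist, over a generalized $S_3$-type field, of a curve whose mod-$N$ image lies in $G$, and hence $E(\Q(3^\infty))_\tor\supseteq T$ by the very construction of $G$. For the reverse direction, the point is that the fiber product construction captures \emph{every} $j$-invariant satisfying the torsion condition, which reduces via the primary decomposition to the $p$-by-$p$ classifications proved in Section~\ref{sec-max}: if $E(\Q(3^\infty))_\tor\supseteq T$ then $\im\rho_{E,p^k}$ lies in a conjugate of $H_p$ for each $p$, so $\im\rho_{E,N}$ lies in a conjugate of $G$, placing $j(E)$ in the $j$-image of $X_G(\Q)$ and therefore in the image of some $j(t)\in F_T$.

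The hard part will be the four positive-genus cases yielding the finite lists, where one must carry out a genuine rational-point computation on a modular curve $X_G$ and then verify that each surviving $\Qbar$-isomorphism class actually realizes the intended torsion structure rather than a strictly larger one. Fortunately this labor has already been done elsewhere in the paper: Corollary~\ref{cor-14x14} handles $\Z/14\Z\oplus\Z/14\Z$ (the single $j$-invariant $2268945/128$); Lemma~\ref{lem-21isog} handles $\Z/6\Z\oplus\Z/42\Z$ (the four curves in isogeny class \href{http://www.lmfdb.org/EllipticCurve/Q/162b}{\texttt{162b}}); Lemma~\ref{lem-15isog} handles $\Z/6\Z\oplus\Z/30\Z$ (the two curves \href{http://www.lmfdb.org/EllipticCurve/Q/50a3}{\texttt{50a3}} and \href{http://www.lmfdb.org/EllipticCurve/Q/50a4}{\texttt{50a4}}); and $\Z/4\Z\oplus\Z/28\Z$ is extracted analogously by fibering the rational $7$-isogeny parameterization of Lemma~\ref{lem-7tor} with the $4$-torsion growth condition of Lemma~\ref{lem-full4tor}, whose fiber product is a positive-rank elliptic curve on which only two noncuspidal rational points yield full $4$-torsion growth. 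Collecting these finite counts $1,4,2,2$ against the infinitude of the remaining $16$ rational parameterizations yields Corollary~\ref{cor-param}.
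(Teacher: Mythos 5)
Your overall strategy coincides with the paper's: reduce the corollary to Theorem~\ref{thm-param}, construct the sets $F_T$ by enumerating maximal admissible Galois images and computing the associated modular curves, and read off finiteness according to whether $F_T$ contains a non-constant map. (Your prime-by-prime decomposition followed by fiber products over $X(1)$ is equivalent, via the CRT splitting of $\GL_2(\Z/n\Z)$, to the paper's direct enumeration of subgroups of $\GL_2(\Z/n\Z)$ with $n$ the exponent of $T$; the paper itself uses fiber products in several of the individual cases.) The genuine problem is the biconditional you propose to establish, namely that $j(E)$ lies in the image of some $j(t)\in F_T$ \emph{if and only if} $T$ embeds into $E(\Q(3^\infty))_\tor$. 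This is false, and the paper says so in the remark following Theorem~\ref{thm-param}: for $E=\texttt{15a1}$ one has $\Z/2\Z\oplus\Z/8\Z\subseteq E(\Q(3^\infty))_\tor\simeq\Z/8\Z\oplus\Z/8\Z$, yet $j(E)$ is not in the image of the parameterization listed for $T=\Z/2\Z\oplus\Z/8\Z$. The reason is that the maximal groups $G$ defining $F_T$ are chosen so that the submodule fixed by the relevant normal subgroup is \emph{isomorphic to} $T$, not merely contains a copy of $T$, so a curve with strictly larger torsion need not have its mod-$n$ image inside any of the maximal $G$ for $T$. What is true, and what the paper proves, is the asymmetric pair of implications: $j(E)\in \im F_T$ implies $T\hookrightarrow E(\Q(3^\infty))_\tor$, and $E(\Q(3^\infty))_\tor\simeq T$ (exactly) implies $j(E)\in\im F_T$. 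Your deduction of the corollary in fact only uses these two correct implications (you apply the reverse direction only to $T=E(\Q(3^\infty))_\tor$ itself), so your conclusion survives; but you must restate the intermediate claim in this weaker form, since the full biconditional you set out to prove admits a counterexample.

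Two further points on the finite cases, which are exactly the ``hard part'' you defer. For $\Z/4\Z\oplus\Z/28\Z$ the computation is not on ``a positive-rank elliptic curve'': there are three maximal groups (one of level $14$, two of level $28$); two are eliminated because they would force a rational $28$-isogeny, contradicting Theorem~\ref{thm-isog}, and the survivor yields a genus-$3$ curve mapping $2$-to-$1$ onto a genus-$2$ hyperelliptic curve whose Jacobian has rank $1$, so a genuine Chabauty argument is required. Moreover, reducing the $2$-primary condition to Lemma~\ref{lem-full4tor} alone silently discards the case $E(\Q)[2]\neq 0$ (where $E(\Q(3^\infty))(2)=E(\Q(2^\infty))(2)$ by Lemma~\ref{lem-2primary}); that case is precisely where the level-$14$ group comes from, and it must be listed and then eliminated rather than ignored.
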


\begin{table}
\begin{center}
\setlength\extrarowheight{5pt}
\begin{tabular}{ll}
$T$ & $j(t)$\\\midrule
$\Z/2\Z\oplus\Z/2\Z$ & $t$\\
$\Z/2\Z\oplus\Z/4\Z$ & $\frac{(t^2+16t+16)^3}{t(t+16)}$ \\ 
$\Z/2\Z\oplus\Z/8\Z$ & $\frac{(t^4-16t^2+16)^3}{t^2(t^2-16)}$ \\ 
$\Z/2\Z\oplus\Z/10\Z$ & $\frac{(t^4-12t^3+14t^2+12t+1)^3}{t^5(t^2-11t-1)}$ \\ 
$\Z/2\Z\oplus\Z/14\Z$ & $\frac{(t^2+13t+49)(t^2+5t+1)^3}{t}$ \\ 
$\Z/2\Z\oplus\Z/16\Z$ & $\frac{(t^{16}-8t^{14}+12t^{12}+8t^{10}-10t^8+8t^6+12t^4-8t^2+1)^3}{t^{16}(t^4-6t^2+1)(t^2+1)^2(t^2-1)^4}$ \\ 
$\Z/2\Z\oplus\Z/26\Z$ & $\frac{(t^4-t^3+5t^2+t+1)(t^8-5t^7+7t^6-5t^5+5t^3+7t^2+5t+1)^3}{t^{13}(t^2-3t-1)}$ \\ 
$\Z/4\Z\oplus\Z/4\Z$ & $\frac{(t^2+192)^3}{(t^2 - 64)^2}$ \\ 
& $\frac{-16(t^4-14t^2+1)^3}{t^2(t^2+1)^4}$ \\ 
& $\frac{-4(t^2 + 2t - 2)^3(t^2 + 10t - 2)}{t^4}$ \\ 
$\Z/4\Z\oplus\Z/8\Z$ & $\frac{16(t^4+4t^3+20t^2+32t+16)^3}{t^4(t+1)^2(t+2)^4}$ \\
& $\frac{-4(t^8-60t^6+134t^4-60t^2+1)^3}{t^2(t^2-1)^2(t^2+1)^8}$ \\
$\Z/4\Z\oplus\Z/16\Z$ & $\frac{(t^{16}-8t^{14}+12t^{12}+8t^{10}+230t^8+8t^6+12t^4-8t^2+1)^3}{t^8(t^2-1)^8(t^2+1)^4(t^4- 6t^2 + 1)^2}$ \\ 
$\Z/4\Z\oplus\Z/28\Z$ & $\{\frac{351}{4},\frac{-38575685889}{16384}\}$ \\ 
$\Z/6\Z\oplus\Z/6\Z$ & $\frac{(t+27)(t+3)^3}{t}$ \\ 
$\Z/6\Z\oplus\Z/12\Z$ & $\frac{(t^2-3)^3(t^6-9t^4+3t^2-3)^3}{t^4(t^2-9)(t^2-1)^3}$ \\ 
$\Z/6\Z\oplus\Z/18\Z$ & $\frac{(t+3)^3(t^3+9t^2+27t+3)^3}{t(t^2+9t+27)}$ \\ 
& $\frac{(t+3)(t^2-3t+9)(t^3+3)^3}{t^3}$ \\ 
$\Z/6\Z\oplus\Z/30\Z$ & $\{\frac{-121945}{32},\frac{46969655}{32768}\}$ \\ 
$\Z/6\Z\oplus\Z/42\Z$ & $\{\frac{3375}{2},\frac{-140625}{8},\frac{-1159088625}{2097152},\frac{-189613868625}{128}\}$ \\ 
$\Z/8\Z\oplus\Z/8\Z$ & $\frac{(t^8+224t^4+256)^3}{t^4(t^4-16)^4}$ \\ 
$\Z/12\Z\oplus\Z/12\Z$ & $\frac{(t^2+3)^3(t^6-15t^4+75t^2+3)^3}{t^2(t^2-9)^2(t^2-1)^6}$ \\ 
& $\{\frac{-35937}{4},\frac{109503}{64}\}$ \\ 
$\Z/14\Z\oplus\Z/14\Z$ & $\{\frac{2268945}{128}\}$ \\ 
$\Z/18\Z\oplus\Z/18\Z$ & $\frac{27t^3(8-t^3)^3}{(t^3+1)^3}$ \\ 
 & $\frac{432t(t^2-9)(t^2+3)^3(t^3-9t+12)^3(t^3+9t^2+27t+3)^3(5t^3-9t^2-9t-3)^3}{(t^3-3t^2-9t+3)^9(t^3+3t^2-9t-3)^3}$ \\ 
\bottomrule
\end{tabular}
\bigskip

\caption{Parameterizations $j(t)$ of the $\Qbar$-isomorphism classes of elliptic curves $E/\Q$ according to the isomorphism type of $E(\Q(3^\infty))$.}\label{table-param}
\end{center}
\end{table}

\begin{proof}[Proof of Theorem~\ref{thm-param}]
For each group $T\in \mathcal{T}$ we enumerate subgroups $G$ of $\GL_2(\Z/n\Z)$, where $n$ is the exponent of $T$, and determine the $G$ that are maximal with respect to the following properties:
\begin{enumerate}[(i)]
\item the determinant map $G\to (\Z/n\Z)^\times$ is surjective and $G$ contains an element of trace $0$ and determinant $-1$ that acts trivially on a maximal cyclic $\Z/n\Z$-submodule of $\Z/n\Z\oplus\Z/n\Z$;
\item the submodule of $\Z/n\Z\oplus\Z/n\Z$ on which the minimal normal subgroup $N$ of $G$ for which $G/N$ is of generalized $S_3$-type acts trivially is isomorphic to $T$.
\end{enumerate}

Note that the minimal $N$ is unique, since if $N_1$ and $N_2$ are two normal subgroups of $G$ for which $G/N_1$ and $G/N_2$ are both of generalized $S_3$-type, then for $N=N_1\cap N_2$ the quotient $G/N$ is isomorphic to a subgroup of the direct product of $G/N_1$ and $G/N_2$, hence also of generalized $S_3$-type.
We recall that (i) is necessarily satisfied by any subgroup $G$ of $\GL_2(\Z/n\Z)$ that arises as the image of $\rho_{E,n}$ for an elliptic curve $E/\Q$, and (ii) implies that if $G\simeq\rho_{E,n}(\Gal(\Q(E[n])/\Q))$ for some $E/\Q$, then $G/N\simeq \Gal((\Q(E[n])\cap\Q(3^\infty))/\Q)$ and $N\simeq \Gal(\Q(E[n])/(\Q(E[n])\cap\Q(3^\infty))$.
The $n$-torsion points of~$E$ fixed by $\Gal(\Qbar/\Q(3^\infty))$ must then form a subgroup isomorphic to $T$, equivalently, $E(\Q(3^\infty))_\tor$ contains a subgroup isomorphic to $T$.
The existence of the examples in Remark~\ref{rem-examples} ensures that we get at least one maximal $G$ for each $T$.

Our maximality condition ensures that $G$ always contains $-1$ (otherwise we can add $-1$ to both $G$ and $N$).
The corresponding modular curve $X_G$ has a rational model (because the determinant map of $G$ is surjective), and each non-cuspidal rational point on $X_G$ determines a $\Qbar$-isomorphism class that contains an elliptic curve $E/\Q$ for which $\im \rho_{E,n}$ is conjugate in $\GL_2(\Z/n\Z)$ to a subgroup of $G$.
For $j(E)\ne 1728$ the group $E(\Q(3^\infty))_\tor$ depends only on $j(E)$, by Proposition~\ref{prop-twist}, thus we may restrict our attention to the image $J_G$ of the non-cuspidal points in $X_G(\Q)$ under the map to $X(1)$; if $j(E)$ lies in this image then there is an elliptic curve $E'$ in this $\Qbar$-isomorphism class for which $\im\rho_{E',n}$ is conjugate to a subgroup of $G$, and it follows that $E'(\Q(3^\infty))_\tor$, and therefore $E(\Q(3^\infty))_\tor$, must contain a subgroup isomorphic to $T$.
In the other direction, if $E(\Q(3^\infty))_\tor\simeq T$, then $\im\rho_{E,n}$ must be conjugate to a subgroup of one of the maximal groups $G$ for this $T$, and $j(E)$ must lie in~$J_G$.
The set $\mathcal{T}(E)$ thus contains a unique maximal element, namely, $T(E)\simeq E(\Q(3^\infty))_\tor$, since if $T'\in \mathcal{T}(E)$ then $E(\Q(3^\infty))_\tor\simeq T$ must contain a subgroup isomorphic to $T'$.
The theorem then follows, provided that for each $T\in\mathcal{T}$ we can determine a set of rational functions $F_T$ for which the union of the images of these functions is equal to the union of the image $J_G$ over the maximal groups $G$ for $T$.
This amounts to explicitly expressing each of the images $J_G$ as the union of the images of a set of (possibly constant) rational functions $j(t)$.  We turn now to this problem.

We first note that it may happen that $G$ is the full inverse image of the reduction map from $\GL_2(\Z/n\Z)$ to $\GL_2(\Z/m\Z)$ for some $m$ dividing $n$; in this case we reduce $G$ modulo the largest such $m$ and call $m$ the \emph{level} of $G$. For example, when $T=\Z/2\Z\oplus\Z/2\Z$ we have $G=\GL_2(\Z/2\Z)$ and can reduce $G$ to the trivial group of level 1 corresponding to $X(1)$; this is consistent with the fact that $E(\Q(3^\infty))[2]=E[2]$ holds for all $E/\Q$.  Similar remarks apply whenever $n=2m$ with $m$ odd.

A Magma script to enumerate the maximal groups $G$ for each torsion structure $T$ can be found at~\cite{magmascripts};
for each $G$ we may determine the genus of $X_G$ by taking the intersection of $G$ with $\SL_2(\Z/n\Z)$ (all the cases of interest are already listed in the tables of Cummins and Pauli \cite{cp}), and we use \cite[Lemma 3.4]{zywina2} to determine the number of rational cusps on $X_G$.
There are a total of 33 maximal groups $G$ for the 20 groups $T$, and we find that for each of these $G$, one of the following holds: (1) $X_G$ has genus 0 and a rational point, in which case $X_G$ is isomorphic to $\mathbb{P}^1$ and the map $X_G\to X(1)$ is given by a rational function $j(t)$, or (2) $X_G$ is isomorphic to either a genus 1 curve with no rational points, an elliptic curve of rank 0, or a curve of genus greater than 1, and in every case the image of $X_G(\Q)$ in $X(1)$ is finite (by Faltings' Theorem \cite{faltings}).

For the first five groups $T$ listed in Table~\ref{thm-param}, there is a unique maximal $G$ and $X_G$ has genus 0 and is of prime-power level; for these $G$ we may take $j(t)$ from \cite{SZ} (for the 2-power levels, maps that are equivalent up to an automorphism of $\mathbb{P}^1$ (hence have the same image) can also be found in the tables of~\cite{RZB}).
The same applies to the groups $\Z/2\Z\oplus\Z/26\Z$, $\Z/6\Z\oplus\Z/6\Z$, and $\Z/8\Z\oplus\Z/8\Z$. We now briefly discuss each of the remaining 12 groups $T$:

\begin{itemize}
\setlength{\itemsep}{4pt}
\item $\Z/2\Z\oplus\Z/16\Z$: There are two maximal $G$, both of level 16; for the first, $X_G$ has genus 0 and the corresponding map $j(t)$ from \cite{SZ} is listed in Table~\ref{table-param}.
For the second $X_G$ is a genus 1 curve with no rational points (the curve \texttt{X335} in \cite{RZB}).

\item $\Z/4\Z\oplus\Z/4\Z$: There are three maximal $G$, one of level 2 and two of level 4, all of genus 0; the corresponding maps $j(t)$ from \cite{SZ} are listed in Table~\ref{table-param}.

\item $\Z/4\Z\oplus\Z/8\Z$: There are two maximal $G$, one of level 4 and one of level 8, both of genus 0; the corresponding maps $j(t)$ from \cite{SZ} are listed in Table~\ref{table-param}.

\item $\Z/4\Z\oplus\Z/16\Z$: There are two maximal $G$, one of level 8 and one of level 16. The level 8 curve has genus 0 and the corresponding map $j(t)$ from \cite{SZ} is listed in Table~\ref{table-param}, while the level 16 curve is a genus 1 curve with no rational points (the curve \texttt{X478} in \cite{RZB}).

\item $\Z/4\Z\oplus\Z/28\Z$: There are three maximal $G$, one of level 14 and two of level 28, all of which have genus greater than 2. Two are ruled out by the fact that any $E/\Q$ with this image would be isogenous to an $E'/\Q$ admitting a rational 28-isogeny, but no such $E'$ exist, by Theorem~\ref{thm-isog}.
The remaining $G$ of level 28 corresponds to a modular curve $X_G$ of of genus~3 with congruence subgroup
\href{http://www.uncg.edu/mat/faculty/pauli/congruence/csg3.html\#group28E3}{$\textrm{28E}^3$}.
This curve admits a degree-2 map to a genus~2 curve $X_H$, where $G\subseteq H$, with congruence subgroup
\href{http://www.uncg.edu/mat/faculty/pauli/congruence/csg2.html\#group28A2}{$\textrm{28A}^2$}.
The curve $X_H$ has a hyperelliptic model
\[
X_H: y^2 = x^6 - 2x^5 - 4x^4 - 4x^3 - 4x^2 - 2x + 1
\]
whose Jacobian has rank 1.
Chabauty's method finds that $X_H$ has 4 rational points, two of which are the image of known non-cuspidal rational points on $X_G$ (the corresponding $j$-invariants are listed in Table~\ref{table-param}), while the other two are cusps.

\item $\Z/6\Z\oplus\Z/12\Z$: There is one maximal $G$ and it is conjugate to the Borel group in $\GL_2(\Z/12\Z)$, and $X_G=X_0(12)$ has genus 0; the map to the $j$-line is taken from \cite[Table 3]{lozano1}.

\item $\Z/6\Z\oplus\Z/18\Z$: There are three maximal $G$, all of level 9, two of genus 0 and one of~genus~1.
The corresponding maps $j(t)$ for the genus 0 curves form \cite{SZ} are listed in Table~\ref{table-param}.
As shown in the proof of \ref{lem-3x9},
the genus~1 curve has only one non-cuspidal rational point corresponding to
$j$-invariant 0, but for $j(E)=0$ we have $E(\Q(3^\infty))_\tor\simeq \Z/18\Z\oplus\Z/18\Z$.

\item $\Z/6\Z\oplus\Z/30\Z$: There is one maximal $G$, of level 15 and genus 1 and $X_G$ admits a map to $X_0(15)$  whose rational points give four distinct $j$-invariants; see ~\cite[Table 4]{lozano1}.
Of these, two correspond to elliptic curves whose mod-15 Galois image is isomorphic to a subgroup of $G$ (of index 2 but yielding the same $E(\Q(3^\infty))_\tor$ structure); these are listed in Table~\ref{table-param}.

\item $\Z/6\Z\oplus\Z/42\Z$: There is one maximal $G$, of level 21 and genus 1, and $X_G$ is the curve $X_0(21)$ whose rational points give rise to four the $j$-invariants listed in Table~\ref{table-param}; see \cite[Table 4]{lozano1}.

\item $\Z/12\Z\oplus\Z/12\Z$: There are three maximal $G$, one of level 6 and genus 0 whose corresponding map $j(t)$ can be computed as a fiber product of maps in \cite{SZ}; this map appears in Table~\ref{table-param}.
The other two have level 12 and genus 1, and the $X_G$ are isomorphic to \href{http://www.lmfdb.org/EllipticCurve/Q/48a1}{\texttt{48a1}} and \href{http://www.lmfdb.org/EllipticCurve/Q/48a3}{\texttt{48a3}} respectively.
The first has four rational points, all cuspidal, and the second has eight rational points, four of which are non-cuspidal and yield the two $j$-invariants listed in Table~\ref{table-param}.

\item $\Z/18\Z\oplus\Z/18\Z$: There are two maximal $G$, one of level 3 and one of level 9 and both of genus 0; the corresponding maps $j(t)$ from \cite{SZ} appear in Table~\ref{table-param}.
\end{itemize}
Further details of these computations can be found in \cite{magmascripts}.
\end{proof}

\end{document}